% article2.arXiv
% files needed: , 
% Algebra Universalis, to appear.

\documentclass{amsart}
\usepackage{graphicx}
\usepackage{graphics}
\usepackage{psfrag}
\usepackage[active]{srcltx}
\usepackage{amssymb}
\usepackage{amscd}

%%%%%%%%%%%%%%%%%%%%%%%%%%%%%%
%%%%%%%%%%%%%%%%%%%%%%%%%%%%%%
%
%   ENVIRONMENTS
%
%%%%%%%%%%%%%%%%%%%%%%%%%%%%%%
%%%%%%%%%%%%%%%%%%%%%%%%%%%%%%`
%
%% \theoremstyle{plain} %% This is the default
\newtheorem{theorem}{Theorem}[section]
\newtheorem{cor}[theorem]{Corollary}
\newtheorem{lm}[theorem]{Lemma}

\theoremstyle{definition}
\newtheorem{df}[theorem]{Definition}

\newtheorem{con}[theorem]{Convention}

\theoremstyle{remark}

\numberwithin{equation}{section}

\newif\ifShowLabels
\ShowLabelstrue
\newdimen\theight
\newcommand\TeXref[1]{%
     \leavevmode\vadjust{\setbox0=\hbox{{\tt
               \quad\quad #1}}%
                    \theight=\ht0
                         \advance\theight by \dp0
                              \advance\theight by \lineskip
                                   \kern -\theight \vbox to
                                             \theight{\rightline{\rlap{\box0}}%
                                                   \vss}%
                                                         }}%
\newcommand{\labelp}[1]{\label{#1}%
    \ifShowLabels \TeXref{{#1}} \fi}
\ShowLabelsfalse
%%%%%%%%%%%%%%%%%%%%%%%%%%%%%%
%%%%%%%%%%%%%%%%%%%%%%%%%%%%%%

%%%%%%%%%%%%%%%%%%%%%%%%%%%%%%
%%%%%%%%%%%%%%%%%%%%%%%%%%%%%%
%
%   MATH SYMBOLS
%
%%%%%%%%%%%%%%%%%%%%%%%%%%%%%%
%%%%%%%%%%%%%%%%%%%%%%%%%%%%%%
%
\newcommand\bt{\beta} %32x
\newcommand\dlt{\delta} %13x
\newcommand\gm{\gamma}  %52x
\newcommand\g{\gamma}  %
\newcommand\kp{\kappa}  %3x
\newcommand\lph{\alpha}
\newcommand\ps{\psi}
\newcommand\vph{\varphi}
\newcommand\x{\xi}
\newcommand\wi{\xy}
\newcommand\wj{\yz}
\newcommand\wk{\xz}
\newcommand\wu{u}
\newcommand\wv{v}
\newcommand\ww{w}
\newcommand\wx{x}
\newcommand\wy{y}
\newcommand\wz{z}
\newcommand\kk{K/K}  %only renewcommand
\newcommand\kh{K/H}  %renew
\newcommand\gxhi{G_\wx/H_\wi}
\newcommand\gyki{G_\wy/K_\wi}

  %renew
\newcommand\h[1]{H_{#1}}
\newcommand\hs[2]{H_{#1,#2}}
\newcommand\ks[2]{K_{#1,#2}}
\newcommand\e[1]{e_{#1}}
\newcommand\vphi[1]{\vph_{#1}}
\newcommand\pair[2]{(#1,#2)}
\newcommand\kai[1]{\kappa_{#1}}
\newcommand\G[1]{G_{#1}}
\newcommand\gsq[2]{\G{#1}\times \G {#2}}

\newcommand\R{R} %renew
\newcommand\mo{^{-1}}
\newcommand\seq{\subseteq}
\newcommand\ssm{^{\scriptscriptstyle\smile}}
\newcommand\scir{\raise2pt\hbox{$\,\scriptscriptstyle\circ\,$}}
\newcommand\tsbigc{\textstyle \bigcup\limits}
\newcommand\tbigcup{\textstyle \bigcup}
\newcommand\tbigcap{\textstyle \bigcap}
\newcommand\tsum{\textstyle\sum}
\newcommand\idd[1]{id_{#1}}
\newcommand\id[1]{id_{#1}}
            %renew
\newcommand\co{\textnormal{,}\ }     % comma
\newcommand\opar{\textnormal{(}}     % open parentheses
\newcommand\cpar{\textnormal{)}}     % close parentheses
\newcommand\f[1]{{\mathfrak {#1}}}

\newcommand\smbcomma{\,,}
\newcommand{\mc}[1]{\mathcal{#1}}
\renewcommand\k[1]{K_{#1}}
\renewcommand\a{\alpha}
\renewcommand\r[2]{R_{{#1},{#2}}}
\newcommand\per{\textnormal{\myspace.\ }}
\newcommand{\myspace}{{\hspace*{.5pt}}}
\newcommand\qeddef{\qed}
\newcommand{\comma}{\textnormal{,}\ }
\newcommand\xy{{xy}}
\newcommand\cs[2]{{#1}_{#2}}
\newcommand\xx{{xx}}
\newcommand\ex[1]{e_{#1}}
\newcommand\po{\textnormal{.}\ }     % point
\newcommand\yx{{yx}}
\renewcommand\b{\beta}
\newcommand\rp{\!\mid\!}
\newcommand\wwz{{wz}}
\newcommand\varnot{\varnothing}
\newcommand\yz{{yz}}
\newcommand\xz{{xz}}
\newcommand\hvphs[1]{\hat{\vph}_{#1}}
\newcommand\ra{relation algebra}
\newcommand\cra[2]{\mathfrak{#1}\myshortspace[\mathcal{#2}\,]}
\newcommand{\myshortspace}{{\hspace*{.1pt}}}
\newcommand\craset[2]{#1\myshortspace[\mathcal{#2}\,]}

\newcommand{\refL}[1]{Lemma~\ref{L:#1}}
\newcommand{\refD}[1]{Definition~\ref{D:#1}}
\newcommand{\refC}[1]{Corollary~\ref{C:#1}}
\newcommand{\refT}[1]{Theorem~\ref{T:#1}}
\newcommand{\refS}[1]{Section~\ref{S:#1}}

\newcommand{\refF}[1]{Figure~\ref{F:#1}}
\newcommand\cc[3]{C_{#1#2#3}}
\newcommand\trip[3]{(#1,#2,#3)}
\newcommand\ez[1]{\mc E_{#1}}
\newcommand\hh{\h\xy\scir\h\xz}
\newcommand\diff{\sim}
\newcommand\conv{{}^{\scriptstyle\smallsmile}}
\newcommand\ident{1\mynegspace\textnormal{\rq}}
\newcommand{\mynegspace}{{\hspace*{-.5pt}}}
\newcommand\yy{{yy}}
\newcommand\www{{ww}}
\newcommand\refEq[1]{(\ref{Eq:#1})}
\newcommand\refCo[1]{Convention~\ref{Co:#1}}
\newcommand\uv{{uv}}
\newcommand\zx{{zx}}
\newcommand\zy{{zy}}
\newcommand\zw{{zw}}
\newcommand\quadr[4]{(#1,#2,#3,#4)}
\newcommand\xw{{xw}}
\newcommand\dd{D_1}
\newcommand\ddd{D_2}
\newcommand\yw{{yw}}
\newcommand\wwv{{wv}}
\newcommand\ttrip[3]{\tau_{{#1}{#2}{#3}}}
\newcommand\vphih[1]{{\hat\vph}_{#1}}
\newcommand\wwy{{wy}}
\newcommand\wwx{{wx}}
\newcommand\pq{{pq}}
\newcommand\pr{{pr}}
\newcommand\vp{\varphi}
\newcommand\grp[1]{G_{#1}}
\newcommand\ho[2]{\varphi_{#1#2}}
\newcommand\hll[2]{H_{{#1}{#2}}}
\newcommand\hr[2]{K_{#1#2}}
\newcommand\lz[1]{L_{#1}}
\newcommand\ze{0}
\newcommand\on{1}
\newcommand\relprodd{\,\vert\,}
\newcommand\qr{{qr}}
\newcommand\qp{{qp}}
\newcommand\pw{{pw}}
\newcommand\pps{{ps}}
\newcommand\ppt{{pt}}
\newcommand\crah[2]{\mathfrak{#1}\myshortspace[\bar{\mathcal{#2}}\,]}
\newcommand\rs[2]{\R_{#1,#2}}
\newcommand\al{a}
\newcommand\pt[1]{p_{\mc #1}}
\newcommand\vth{\vartheta}
\newcommand\st{{st}}
\newcommand\xu{{xu}}
\newcommand\vz{{vz}}
\newcommand\cras[2]{\mathfrak{#1}\myshortspace[{#2}\,]}

\begin{document}

\title[Coset relation algebras]{Coset relation algebras}
\author{Hajnal Andr\'eka and Steven Givant}%
\address{Hajnal Andr\'eka\\Alfr\'ed R\'enyi Institute of
Mathematics\\Hungarian Academy of Sciences\\Re\'altanoda utca
13-15\\ Budapest\\ 1053 Hungary}\email{andreka.hajnal@renyi.mta.hu}
\address{Steven Givant\\Mills College\\5000 MacArthur Boulevard, Oakland,
CA 94613}\email{givant@mills.edu}
\thanks{This research was
partially supported  by Mills College and  the Hungarian National
Foundation for Scientific Research, Grants T30314 and T35192.}

\begin{abstract} A \textit{measurable relation algebra} is a relation
algebra in which the identity element is a sum of atoms that can be
measured in the sense that the ``size" of each such atom can be
defined in an intuitive and reasonable way (within the framework of
the first-order theory of relation algebras). A large class of
concrete measurable set relation algebras, using systems of groups
and coordinated systems of isomorphisms between quotients of the
groups, is constructed in \cite{giv1}. This class of \textit{group
relation algebras} is not large enough to prove that every
measurable relation algebra is isomorphic to a group relation
algebra and hence is representable.

In the present article, the class of examples of measurable relation
algebras is considerably extended by adding one more ingredient to
the mix: systems of cosets that are used to ``shift" the operation
of relative multiplication.  It is shown that, under certain
additional hypotheses on the system of cosets, each such
\textit{coset relation algebra} with a shifted operation of relative
multiplication is an example of a measurable relation algebra. We
also show that the class of coset relation algebras does contain
examples of measurable relation algebras that are not representable
as set relation algebras. In later articles, it will be shown that
the class of coset relation algebras is adequate to the task of
describing all measurable relation algebras in the sense that every
atomic measurable relation algebra is essentially isomorphic to a
coset relation algebra (see \cite{ga}), and the class of group
relation algebras is similarly adequate to the task of representing
all measurable relation algebras in which the associated groups are
finite and cyclic (see \cite{ag}). An extended abstract for this
series of papers is \cite{ga02}.
\end{abstract}
\maketitle

\section{Introduction}\labelp{S:sec1}

In \cite{giv1},  a subidentity  element $x$---that is to say, an
element below the identity element---of a relation algebra is
defined to be \textit{measurable} if it is an atom and if the square
$x;1;x$ is a sum of  functional elements, that is to say, the sum of
elements that satisfy a characteristic property of relations that
are functions, namely, that the composition of the converse of the
relation with the relation itself is included in the identity
relation. The number of non-zero functional elements below the
square $x;1;x$ gives the \textit{measure}, or the \textit{size}, of
the atom $x$.  A relation algebra is said to be \textit{measurable}
if the identity element is the sum of measurable atoms.
 The group relation algebras  constructed in \cite{giv1}  are examples of measurable
relation algebras. It turns out, however, that they are not the
only examples of measurable relation algebras.

In this paper, a more general class of examples of measurable
relation algebras is constructed.  The algebras are obtained from
group relation algebras by ``shifting" the relational composition
operation by means of coset multiplication, using an auxiliary
system of  cosets. For that reason, we have called them
\textit{coset relation algebras}. By using this new construction, we
show that not all measurable relation algebras are representable. In
fact, as hinted in the proof, the class of coset relation algebras includes
infinitely many mutually
non-isomorphic, non-representable relation algebras. These are new
examples of non-representable relation algebras, with a completely
different underlying motivation than the examples that have appeared
so far in the literature.

These non-representable examples show that it was necessary to
broaden the class of group relation algebras, all of which are
representable, in order to get a representation theorem for all
measurable relation algebras. Indeed, the new class is broad enough
for representing all measurable relation algebras, as is shown in
\cite{ga}. It will be shown in \cite{ag} that if the groups $\G x$
constructed in an atomic, measurable relation algebra $\f A$ are all
finite and cyclic, then $\f A$ is essentially isomorphic to a full
group relation algebra. These theorems together provide far-reaching
generalizations of the atomic case of Maddux's representation
theorem for pair-dense relation algebras in \cite{ma91}. An extended
abstract describing these results and their interconnections
was published by the authors in \cite{ga02}.  The reader might find it
helpful to consult that article in order to get a
overview  of the program and its motivation.

In the next section of this paper,   the principal results
concerning group relation algebras are reviewed.  In the third
section, a system  of shifting cosets is introduced, and a new
operation of multiplication is defined with the help of these
cosets.  Characterizations are given in   the fourth section of when
the resulting algebra is a measurable relation algebra. A concrete
example of such a measurable coset relation algebra that, as it
turns out, is not representable, is given in the fifth section. The
final section  of the paper contains a decomposition theorem for
coset relation algebras that is similar to the decomposition theorem
for group relation algebras proved in \cite{giv1}.  Except for basic
facts about groups, this article is intended to be largely
self-contained. Readers who wish to learn more about the subject of
relation algebras are recommended to look at one or more of the
books Hirsch-Hodkinson\,\cite{hh02}, Maddux\,\cite{ma06}, or
Givant\,\cite{giv18}, \cite{giv18b}.

\section{Group relation algebras}\labelp{S:sec2}

For the convenience of the reader, here is a summary of the
essential notions and results from \cite{giv1} that will be needed
in this paper. Fix a system
\[G=\langle
\G x:x\in I\,\rangle\] of  groups $\langle \G
x\smbcomma\scir\smbcomma\mo\smbcomma \e x\rangle$ that are
pairwise disjoint, and an associated system
\[\varphi=\langle\vph_{xy}:\pair x y\in \mc E\,\rangle\] of
quotient isomorphisms.  Specifically, we require that  $\mc E$ be an equivalence
relation on the index set $I$, and for each pair $\pair x y$ in
$\mc E$, the function $\vphi {xy}$  be an isomorphism from a
quotient group of $\G x$ to a quotient group of $\G y$. Call
\[\mc F=\pair G \varphi\]  a \textit{group pair}.  The set    $I$ is
the \textit{group index set},  and the equivalence relation $\mc
E$ is the
 (\textit{quotient}) \textit{isomorphism index set}, of $\mc F$.
The normal subgroups of $\G x$ and $\G y$ from which the quotient
groups are constructed are uniquely determined by
 $\vphi\wi$, and will be denoted  by $\h\wi$ and $\k\wi$
respectively, so that $\vphi\wi$ maps $\gxhi$ isomorphically onto
$\gyki$.

The elements of the quotient group  $\gxhi$ are cosets, and hence
complexes (sets) of group elements.  As such they obey the standard
laws of group theory. Multiplication of cosets and unions of cosets
is an associative operation for which the normal subgroup $\h\wi$ is
the identity element that commutes with every other coset (and every
union of cosets).  Every coset has an inverse, and the operation of
forming inverses of cosets satisfies the first and second involution
laws: the inverse of the inverse of a coset is the original coset,
and the inverse of the composition of two cosets is the composition
of the inverses, in the reverse order.

For a fixed enumeration $\langle \hs\wi \g:\g<\kai\wi\rangle$
(without repetitions) of the cosets of $\h\wi$ in $\G x$, the
isomorphism $\vphi\wi$ induces a \textit{corresponding}, or
\textit{associated},  coset system
 of $\k\wi$ in $\G y$,  determined by the rule
\[\ks\wi \g=\vphi\wi(\hs\wi \g)\]
for each $\gm<\kai\wi$.
 In what follows, it is always assumed that the given coset
systems for $\h\wi$ in $\G x$ and for $\k\wi$ in $\G y$ are
associated in this manner. Furthermore, it is assumed that the
first elements of the coset systems are always the normal
subgroups themselves, so that
\[\hs\wi 0 =\h\wi\qquad\text{and}\qquad\ks\wi 0 =\k\wi.\]
\begin{df} \labelp{D:compro}  For each pair $\pair
x y$  in $\mc E$ and each $\a<\kai\wi$, define a  binary relation
$\r\wi \a $ by\[ \r{\wi}{\lph}= \tbigcup_{\gm < \kp_\wi}
H_{\wi,\gm}\times \vphi\wi[H_{\wi,\gm}\scir H_{\wi,\lph}]=
\tbigcup_{\gm < \kp_\wi} H_{\wi,\gm}\times (K_{\wi,\gm}\scir
K_{\wi,\lph})\per\] \qeddef\end{df}

\begin{lm}[Partition Lemma] \labelp{L:i-vi}  The
relations $\r\wi \a $\comma for $\a<\kai\xy$\comma are non-empty
and partition the set $\gsq x y$\per
\end{lm}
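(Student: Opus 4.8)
The statement has two parts: (1) each $\r\wi\a$ is non-empty, and (2) the family $\{\r\wi\a : \a<\kai\xy\}$ is a partition of $\gsq xy$. Non-emptiness is immediate: in the defining union
\[
\r{\wi}{\lph}= \tbigcup_{\gm < \kp_\wi} H_{\wi,\gm}\times (K_{\wi,\gm}\scir K_{\wi,\lph}),
\]
the term with $\gm=0$ contributes $\h\wi\times(\k\wi\scir\ks\wi\a)=\h\wi\times\ks\wi\a$ (since $\k\wi$ is the identity of the quotient group), and this is a nonempty set of ordered pairs because cosets are nonempty. So the real content is that these relations cover $\gsq xy$ and are pairwise disjoint.

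**Covering.** Fix an arbitrary pair $(a,b)\in\gsq xy$. The left coordinate $a$ lies in exactly one coset of $\h\wi$ in $\G x$, say $a\in\hs\wi\gm$. Then $b$ lies in exactly one coset of $\k\wi$ in $\G y$. Since left multiplication by $\ks\wi\gm\mo$ (equivalently, applying $\vphi\wi$ to $\hs\wi\gm\mo$ acting on the appropriate coset) permutes the cosets of $\k\wi$, there is a unique coset $\ks\wi\a$ — i.e. a unique index $\a<\kai\wi$ — such that $b\in \ks\wi\gm\scir\ks\wi\a$, namely the coset $\ks\wi\gm\mo\scir(b\k\wi)$. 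For that $\a$ we have $(a,b)\in \hs\wi\gm\times(\ks\wi\gm\scir\ks\wi\a)\seq\r\wi\a$. Hence every pair lies in some $\r\wi\a$.

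**Disjointness.** Suppose $(a,b)\in\r\wi\a\cap\r\wi{\a'}$. From the definition, $(a,b)\in\r\wi\a$ forces, for the unique $\gm$ with $a\in\hs\wi\gm$, that $b\in\ks\wi\gm\scir\ks\wi\a$; similarly $(a,b)\in\r\wi{\a'}$ forces $b\in\ks\wi\gm\scir\ks\wi{\a'}$ for the \emph{same} $\gm$ (the index $\gm$ is determined by $a$ alone, not by $\a$). Here I should note that the cosets $\ks\wi\gm\scir\ks\wi\a$, for fixed $\gm$ and varying $\a$, are pairwise distinct: left translation by the coset $\ks\wi\gm$ is a bijection of the quotient group $\gyki$, and $\a\mapsto\ks\wi\a$ enumerates the cosets without repetition. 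Therefore $\ks\wi\gm\scir\ks\wi\a$ and $\ks\wi\gm\scir\ks\wi{\a'}$ are either equal or disjoint, and since $b$ lies in both they must be equal, forcing $\ks\wi\a=\ks\wi{\a'}$ and hence $\a=\a'$. This proves the relations are pairwise disjoint.

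**Main obstacle.** The only delicate point is the bookkeeping with the two associated coset systems: one must use that $\ks\wi\g=\vphi\wi(\hs\wi\g)$ and that $\vphi\wi$ is an isomorphism of quotient groups, so that the product in the definition, $\vphi\wi[\hs\wi\gm\scir\hs\wi\a]=\ks\wi\gm\scir\ks\wi\a$, is itself a single coset of $\k\wi$ (and thus equals $\ks\wi\b$ for a unique $\b$). Once one is careful that the index $\gm$ in the defining union is pinned down by the left coordinate $a$ while the index $\a$ is pinned down by the right coordinate $b$ relative to that $\gm$, both covering and disjointness follow from the elementary fact that a group acts freely and transitively on itself by left translation (applied to the quotient group $\gyki$). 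I would present the argument by first establishing the key sublemma that for each $\gm$, the map $\a\mapsto\ks\wi\gm\scir\ks\wi\a$ is a bijection from $\kai\wi$ onto the set of cosets of $\k\wi$ in $\G y$, and then reading off both parts of the Partition Lemma.
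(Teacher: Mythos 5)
Your proposal is correct. Note that this paper states the Partition Lemma without proof, importing it from the earlier paper \cite{giv1}, so there is no in-paper argument to compare against; your argument is exactly the natural one: the index $\gamma$ is pinned down by the left coordinate (the cosets $H_{xy,\gamma}$ partition $G_x$), and for that fixed $\gamma$ the map $\alpha\mapsto K_{xy,\gamma}\scir K_{xy,\alpha}$ is a bijection onto the cosets of $K_{xy}$ in $G_y$ (left translation in the quotient group, plus the fact that $\varphi_{xy}$ carries the repetition-free enumeration of the $H_{xy}$-cosets to a repetition-free enumeration of the $K_{xy}$-cosets), which yields non-emptiness, covering, and disjointness just as you describe.
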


Let $U$ be the union of the disjoint system of groups,  and $E$ the
equivalence relation on $U$ induced by the isomorphism index set
$\mc E$, \[U=\tbigcup\{\cs Gx:x\in I\}\qquad\text{and}\qquad
E=\tbigcup\{\cs Gx\times\cs Gy:\pair xy\in \mc E\}\per\]  Take $A$
to be the collection of unions of all possible sets of the relations
of the form $\r\xy\lph$ for $\pair xy$ in $\mc E$ and
$\lph<\kai\xy$.   It turns out that $A$ is always the universe of a
complete and atomic Boolean set algebra.

\begin{theorem}[Boolean Algebra Theorem]\labelp{T:disj} The set $A$
is the universe of a complete\comma atomic Boolean algebra of
subsets of $E$\per The atoms in $A$ are the distinct relations
$\r\xy\lph$ for $\pair xy$ in $\mc E$ and $\lph<\kai\xy$\comma and
the distinct elements in $A$ are the unions of distinct sets of
atoms\per
\end{theorem}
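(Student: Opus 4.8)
The plan is to deduce the theorem essentially at once from the Partition Lemma (\lmref{L:i-vi}) together with the pairwise disjointness of the groups in the system. First I would check that the ``rectangles'' $\gsq xy$, as $\pair xy$ ranges over $\mc E$, are pairwise disjoint: a common point $(a,b)$ of $\gsq xy$ and $\G{x'}\times\G{y'}$ would force $a\in\G x\cap\G{x'}$ and $b\in\G y\cap\G{y'}$, and since the groups in the system are pairwise disjoint this gives $x=x'$ and $y=y'$. Hence $E$ is the disjoint union of the rectangles $\gsq xy$ for $\pair xy\in\mc E$, and applying \lmref{L:i-vi} inside each rectangle shows that the whole family
\[P=\{\,\r\xy\lph:\pair xy\in\mc E,\ \lph<\kai\xy\,\}\]
consists of non-empty sets that partition $E$. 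In particular no two distinct members of $P$ coincide: within a fixed rectangle this is the ``partition'' clause of \lmref{L:i-vi}, and members indexed by different pairs lie in disjoint rectangles.

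Next I would invoke (or quickly verify) the standard fact that, for any partition $P$ of a set $E$, the collection $A$ of all unions $\bigcup S$ with $S\seq P$ is a complete field of subsets of $E$ whose atoms are exactly the blocks of $P$. Closure under arbitrary unions is immediate from $\bigcup_i(\bigcup S_i)=\bigcup(\bigcup_i S_i)$; closure under complementation uses that $P$ covers $E$ with pairwise disjoint blocks, so $E\setminus\bigcup S=\bigcup(P\setminus S)$; closure under arbitrary intersections then follows by De Morgan; and $\varnot=\bigcup\varnot$ and $E=\bigcup P$ lie in $A$. Thus $A$ is the universe of a complete, atomic Boolean algebra of subsets of $E$, with join, meet, and complement the set-theoretic operations relative to $E$.

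To identify the atoms, I would show that each block $R\in P$ is an atom: if $\varnot\neq X\seq R$ with $X=\bigcup S\in A$, then every $R'\in S$ satisfies $R'\seq\bigcup S\seq R$, which forces $R'=R$ because distinct blocks of $P$ are disjoint and $R'$ is non-empty; hence $S=\{R\}$ and $X=R$. Conversely, any non-zero $X=\bigcup S$ with $S\neq\varnot$ contains some block $R\in S$, which proves both that every atom is a block and that $A$ is atomic. Finally, the map $S\mapsto\bigcup S$ from subsets of $P$ onto $A$ is injective: if $\bigcup S=\bigcup S'$, then each $R\in S$, being non-empty and contained in $\bigcup S'$, shares a point with some block of $S'$, which (the blocks being pairwise disjoint) must equal $R$; so $R\in S'$, whence $S\seq S'$ and, symmetrically, $S'\seq S$. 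This gives the remaining assertion, that distinct elements of $A$ are unions of distinct sets of atoms.

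I do not expect a serious obstacle: the substance is entirely contained in the Partition Lemma, which is assumed. The only point that calls for care is the bookkeeping at the ``seams'' — verifying that the local partitions of the individual rectangles $\gsq xy$ genuinely assemble into a single partition of $E$ (this is exactly where pairwise disjointness of the groups is used) and that no relation $\r\xy\lph$ accidentally coincides with one indexed by a different pair or parameter, so that the displayed list really is the list of atoms without repetition.
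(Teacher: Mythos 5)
Your proof is correct: the disjointness of the rectangles $\gsq xy$, the Partition Lemma applied within each rectangle, and the standard fact that arbitrary unions over a partition form a complete atomic field of sets (with the injectivity check for $S\mapsto\tbigcup S$) give exactly the assertions of the theorem. The paper itself states this result without proof, as part of its review of \cite{giv1}, and your argument is the standard one that that development relies on, so there is nothing to add.
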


\renewcommand\wi{\xx}

The set $A$ does not automatically  contain  the identity relation
$\id U$, so it is important to characterize when $\id U$ does belong
to $A$.

\begin{theorem}[Identity Theorem]\labelp{T:identthm1} For each element $x$ in
$I$\co the following conditions are equivalent\per
\begin{enumerate}
\item[(i)]
The identity relation $\idd {\G x}$ on $\G\wx$ is in $A$\per
\item[(ii)] $\r\xx 0=\idd {\G x}$\per
\item[(iii)]$\vphi\xx$ is the identity
automorphism of $\G\wx/\{\ex \wx\}$\po
\end{enumerate}
Consequently\co the set $A$ contains the identity relation $\id U$
on the base set $U$ if and only if \textnormal{(iii)}  holds for
each $\wx$ in $I$\po
\end{theorem}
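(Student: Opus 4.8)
The plan is to establish the cycle (ii)\,$\Rightarrow$\,(i)\,$\Rightarrow$\,(ii) together with the equivalence (ii)\,$\Leftrightarrow$\,(iii), and then to read off the last assertion about $\id U$ from the completeness of $A$. One direction is essentially free: by the Boolean Algebra Theorem (\refT{disj}) the relation $\r\xx 0$ is one of the atoms of $A$, so $\r\xx 0=\idd{\G x}$ immediately gives $\idd{\G x}\in A$; this is (ii)\,$\Rightarrow$\,(i).

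The heart of the matter is (i)\,$\Rightarrow$\,(ii). First I would simplify $\r\xx 0$: since $\hs\xx 0=\h\xx$ is the identity of the quotient group $\G x/\h\xx$, and correspondingly $\ks\xx 0=\k\xx$ is the identity of $\G x/\k\xx$, the formula in \refD{compro} collapses to $\r\xx 0=\tbigcup_{\gm<\kai\xx}\hs\xx\gm\times\ks\xx\gm$, a relation whose domain is all of $\G x$ because the cosets $\hs\xx\gm$ exhaust $\G x$. Now suppose $\idd{\G x}\in A$. Since $\idd{\G x}\seq\G x\times\G x=\tbigcup_{\lph<\kai\xx}\r\xx\lph$ — the equality by the Partition Lemma (\refL{i-vi}) — and $A$ is atomic with atoms the relations $\r\xy\lph$ (\refT{disj}), the set $\idd{\G x}$ is a union of certain atoms $\r\xx\lph$ with $\lph<\kai\xx$. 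It therefore suffices to locate the atom containing $\pair{\ex x}{\ex x}$: from \refD{compro}, $\ex x\in\hs\xx\gm$ forces $\gm=0$, and then $\ex x\in\ks\xx 0\scir\ks\xx\lph=\ks\xx\lph$ forces $\lph=0$, so $\pair{\ex x}{\ex x}$ lies in $\r\xx 0$ and in no other atom of this form. Hence $\r\xx 0\seq\idd{\G x}$, and because $\r\xx 0$ has domain $\G x$ this inclusion must be an equality, which is (ii).

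For (ii)\,$\Leftrightarrow$\,(iii), assume first $\r\xx 0=\idd{\G x}$. Then $\h\xx\times\k\xx=\hs\xx 0\times\ks\xx 0\seq\idd{\G x}$, and since $\ex x$ belongs to both $\h\xx$ and $\k\xx$ this forces $\h\xx=\k\xx=\{\ex x\}$; consequently $\vphi\xx$ is an automorphism of $\G x/\{\ex x\}$, the cosets $\hs\xx\gm$ and $\ks\xx\gm$ are all singletons, and $\r\xx 0$ consists exactly of the pairs $\pair g{g'}$ for which $\vphi\xx(\{g\})=\{g'\}$. The equality $\r\xx 0=\idd{\G x}$ then says $g'=g$ throughout, that is, $\vphi\xx$ is the identity automorphism, which is (iii). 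Conversely, (iii) already entails $\h\xx=\k\xx=\{\ex x\}$ and $\ks\xx\gm=\vphi\xx(\hs\xx\gm)=\hs\xx\gm$ for every $\gm$, so $\r\xx 0=\tbigcup_{\gm<\kai\xx}\hs\xx\gm\times\hs\xx\gm=\idd{\G x}$, which is (ii).

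Finally, $\id U=\tbigcup_{x\in I}\idd{\G x}$. If (iii) holds for every $x$, then each $\idd{\G x}$ lies in $A$ by the equivalence just proved, and since $A$ is a complete Boolean set algebra (\refT{disj}) the union $\id U$ lies in $A$. Conversely, if $\id U\in A$, then for each $x$ the set $\G x\times\G x=\tbigcup_{\lph<\kai\xx}\r\xx\lph$ is in $A$ (by \refL{i-vi} and \refT{disj}), hence $\idd{\G x}=\id U\cap(\G x\times\G x)$ is in $A$ because $A$ is closed under intersection; so (i), and therefore (iii), holds for that $x$. I expect the one genuinely delicate point to be the explicit computation of $\r\xx 0$ and the identification of the unique atom containing $\pair{\ex x}{\ex x}$, from which the collapse $\h\xx=\k\xx=\{\ex x\}$ and then condition (iii) are forced; the remaining steps are routine manipulations with the Partition Lemma and with the Boolean operations of $A$.
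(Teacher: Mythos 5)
Your proof is correct. One caveat about the comparison you asked for: this paper does not prove the Identity Theorem at all---it is one of the results quoted verbatim from \cite{giv1} in the background section (\S2), so there is no in-paper argument to measure yours against; it can only be checked on its own merits. On those merits it is sound. The computation $R_{xx,0}=\bigcup_{\gamma<\kappa_{xx}}H_{xx,\gamma}\times K_{xx,\gamma}$ (using $K_{xx,0}=K_{xx}$ and the identity-element property of $K_{xx}$ among its cosets) is right, and its domain is indeed all of $G_x$. The step ``$\mathrm{id}_{G_x}\in A$ implies it is a union of atoms of the form $R_{xx,\alpha}$'' is correct, though it silently uses the pairwise disjointness of the groups to exclude atoms $R_{yz,\alpha}$ with $(y,z)\neq(x,x)$; worth saying explicitly. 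Your identification of $R_{xx,0}$ as the unique atom of this form containing $(e_x,e_x)$, followed by the full-domain argument forcing $R_{xx,0}=\mathrm{id}_{G_x}$, settles (i)$\Rightarrow$(ii), and the equivalence with (iii) via $H_{xx}\times K_{xx}\subseteq\mathrm{id}_{G_x}$ forcing $H_{xx}=K_{xx}=\{e_x\}$, hence reading $R_{xx,0}$ as the graph of $\varphi_{xx}$ on singletons, is exactly the right mechanism. The final assertion about $\mathrm{id}_U$ follows, as you say, from closure of $A$ under arbitrary unions for one direction and from $\mathrm{id}_{G_x}=\mathrm{id}_U\cap(G_x\times G_x)$ together with closure under intersection for the other.
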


\renewcommand\wi{\xy}
\renewcommand\wj{\yx}

Similarly, the set $A$ is not automatically closed under the
operation of converse.

\begin{theorem}[Converse Theorem]\labelp{T:convthm1}  For each
pair $\pair x y$  in $\mc E$\co the following conditions are
equivalent\per
\begin{enumerate}
\item[(i)] There are an $\a<\kai \xy$ and a $\b<\kai\yx$ such that $\r\xy\lph\mo=\r\yx\bt$\per
\item[(ii)] For every $\a<\kai \xy$ there is a $\b<\kai\yx$ such that $\r\xy\lph\mo=\r\yx\bt$\per
\item[(iii)]$\vphi\xy\mo=\vphi\yx$.
\end{enumerate}
Moreover\comma if one of these conditions holds\comma then we may assume that $\kai \yx=\kai\xy$\comma
and the index $\b$ in \textnormal{(i)} and \textnormal{(ii)} is uniquely determined by $
\hs\xy\lph\mo=\hs\xy\bt$\per The set $A$ is closed under converse if and only
 if \textnormal{(iii)} holds for all    $\pair x y$ in $\mc E$\po
\end{theorem}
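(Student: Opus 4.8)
The plan is to push everything down into coset arithmetic. First I would read off from the two displayed forms in \refD{compro} the membership criterion, valid for every $\pair xy$ in $\mc E$ and all $g\in\G x$, $h\in\G y$:
\[(g,h)\in\r\xy\alpha\iff hK_{xy}=\vphi\xy(gH_{xy})\scir K_{xy,\alpha}\iff gH_{xy}=\vphi\xy\mo(hK_{xy})\scir H_{xy,\alpha}\mo\]
The middle equivalence holds because, for $g\in H_{xy,\gamma}$, the definition forces $h\in K_{xy,\gamma}\scir K_{xy,\alpha}$ while $\vphi\xy(gH_{xy})=K_{xy,\gamma}$; the last follows on applying $\vphi\xy\mo$ and using $\vphi\xy\mo(K_{xy,\alpha})=H_{xy,\alpha}$. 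In particular, for fixed $h$ the set of $g$ with $(h,g)\in\r\xy\alpha\mo$ is exactly one coset of $H_{xy}$ in $\G x$. The implication (ii) $\Rightarrow$ (i) is then trivial, since the coset enumeration is non-empty (so $\kai\xy\ge1$) and one may take $\alpha=0$.

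For (iii) $\Rightarrow$ (ii), assume $\vphi\yx=\vphi\xy\mo$. Matching domains and codomains of these two isomorphisms gives $H_{yx}=K_{xy}$ as subgroups of $\G y$ and $K_{yx}=H_{xy}$ as subgroups of $\G x$; I would then choose the coset enumeration of $H_{yx}=K_{xy}$ in $\G y$ to be the already-fixed one, $H_{yx,\gamma}=K_{xy,\gamma}$, which is exactly what forces $\kai\yx=\kai\xy$, so that the associated system becomes $K_{yx,\gamma}=\vphi\yx(H_{yx,\gamma})=\vphi\xy\mo(K_{xy,\gamma})=H_{xy,\gamma}$. Given $\alpha<\kai\xy$, let $\beta$ be the unique index with $H_{xy,\beta}=H_{xy,\alpha}\mo$ (the inverse of a coset of $H_{xy}$ is again a coset of $H_{xy}$). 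Feeding these identities, together with the criterion applied to $\pair yx$, into the displayed criterion shows that both $(h,g)\in\r\xy\alpha\mo$ and $(h,g)\in\r\yx\beta$ are equivalent to $gH_{xy}=\vphi\xy\mo(hK_{xy})\scir H_{xy,\beta}$; hence $\r\xy\alpha\mo=\r\yx\beta$, which proves (ii) together with both \emph{moreover} assertions.

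For (i) $\Rightarrow$ (iii), suppose $\r\xy\alpha\mo=\r\yx\beta$. Fixing $h\in\G y$ and equating, via the criterion, the two descriptions of $\{\,g:(h,g)\in\r\xy\alpha\mo\,\}$, one sees that a coset of $H_{xy}$ in $\G x$ coincides as a set with a coset of $K_{yx}$ in $\G x$; since a coset determines the subgroup of which it is a coset, $H_{xy}=K_{yx}$, and equating the cosets themselves yields $\vphi\xy\mo(hK_{xy})\scir H_{xy,\alpha}\mo=\vphi\yx(hH_{yx})\scir K_{yx,\beta}$ for all $h$. Putting $h=\e y$ gives $H_{xy,\alpha}\mo=K_{yx,\beta}$, which is the \emph{moreover} identification of $\beta$ (it reads $H_{xy,\alpha}\mo=H_{xy,\beta}$ once one takes $H_{yx,\gamma}=K_{xy,\gamma}$, so that $K_{yx,\beta}=H_{xy,\beta}$); cancelling this common right-hand factor leaves $\vphi\xy\mo(hK_{xy})=\vphi\yx(hH_{yx})$ for every $h$. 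Since $\vphi\xy\mo$ is injective on $\G y/K_{xy}$ and $\vphi\yx$ on $\G y/H_{yx}$, the two functions $h\mapsto\vphi\xy\mo(hK_{xy})$ and $h\mapsto\vphi\yx(hH_{yx})$ have the same fibres, and evaluating fibre-equality at $\e y$ forces $K_{xy}=H_{yx}$. Therefore $\vphi\xy\mo$ and $\vphi\yx$ share the domain $\G y/K_{xy}$, share the codomain $\G x/H_{xy}$, and agree on every coset, so $\vphi\xy\mo=\vphi\yx$, which is (iii). I expect this implication to be the main obstacle: one must extract both subgroup identities $H_{xy}=K_{yx}$ and $K_{xy}=H_{yx}$ \emph{and} the identity $\vphi\xy\mo=\vphi\yx$ from a single relational equation, while keeping track of which enumeration of the $\pair yx$ cosets is active.

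For the last assertion, $A$ is closed under converse iff $\r\xy\alpha\mo\in A$ for all $\pair xy$ in $\mc E$ and all $\alpha$. By \refT{disj} the atoms of $A$ lying inside $\G y\times\G x$ are precisely the relations $\r\yx\gamma$, so $\r\xy\alpha\mo\in A$ forces $\r\xy\alpha\mo$ to be a union of some of the $\r\yx\gamma$; and since, by \refL{i-vi}, the $\r\yx\gamma$ partition $\G y\times\G x$, this says the partition $\{\,\r\yx\gamma:\gamma<\kai\yx\,\}$ refines $\{\,\r\xy\alpha\mo:\alpha<\kai\xy\,\}$. Applying the same to the pair $\pair yx$ and taking converses gives the reverse refinement, so the two partitions coincide and each $\r\xy\alpha\mo$ is a single $\r\yx\beta$; thus closure under converse forces (i) for every pair. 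Conversely, if (iii) holds for every pair, then by the implication proved above each $\r\xy\alpha\mo$ equals some $\r\yx\beta\in A$, and since converse distributes over unions, $A$ is closed under converse.
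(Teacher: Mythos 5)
Your proof is correct: the membership criterion $(g,h)\in R_{xy,\alpha}\iff hK_{xy}=\varphi_{xy}(gH_{xy})\circ K_{xy,\alpha}\iff gH_{xy}=\varphi_{xy}^{-1}(hK_{xy})\circ H_{xy,\alpha}^{-1}$ is exactly what Definition~\ref{D:compro} gives, the three implications and the closure-under-converse argument all go through, and the re-choice of the enumeration for the pair $(y,x)$ in (iii)$\Rightarrow$(ii) is harmless because (ii) is existential in $\beta$ (the family of relations $R_{yx,\beta}$ does not depend on the indexing), which is precisely what the paper codifies afterwards in Convention~\ref{Co:con11}. Note that this paper states the Converse Theorem without proof, citing \cite{giv1}, so there is no in-paper argument to compare against; your coset-arithmetic proof is the natural one that the paper's subsequent use of the theorem presupposes.
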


\begin{con}
\labelp{Co:con11} Suppose $A$ is closed under converse. If a pair
$\pair x y$ is in $\mc E$, then $ \h\yx=\k\xy$, and therefore any
coset system for $\h\yx$ is also a coset system for $\k\xy$\per
Since the enumeration $\langle \hs \yx \g:\g<\kai \yx\rangle$ of
the cosets of $\h \yx$ can be freely chosen,
 we can and  always shall choose it so that
$\kai \yx=\kai \xy$ and
 $\hs \yx\g=\ks \xy\g$ for $\g<\kai \xy$\per  It then follows from  the
Converse Theorem that $\ks \yx \g = \hs \xy \g$ for $\g<\kai
\xy$\per
\end{con}

\renewcommand\wj{\yz}
Finally, the set $A$ is not in general closed under relational
composition, except when the composition is empty.

\renewcommand\wi{\xy}
\renewcommand\wj{\wwz}

\begin{lm}\labelp{L:emptycomp}  If $\pair \wx\wy$ and $ \pair w z$ are in $\mc E$\co and if $y\ne
w$\co then
\[\r\wi \a \rp \r\wj \b=\varnot
\]
for all $\lph<\kai\wi$ and $\bt <\kai\wj$\po
\end{lm}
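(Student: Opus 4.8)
The plan is to argue directly from the definition of the relative product, using nothing more than the pairwise disjointness of the groups $\G x$. Recall that for binary relations $R$ and $S$ the relative product $R \rp S$ consists of those pairs $(a,c)$ for which there is some $b$ with $(a,b) \in R$ and $(b,c) \in S$; in particular, if $R \rp S \ne \varnot$, then the set of second coordinates of pairs in $R$ must meet the set of first coordinates of pairs in $S$.

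The first step is to read off from Definition~\refD{compro} that $\r\xy\alpha$ is a union of Cartesian products $H_{xy,\gamma}\times(K_{xy,\gamma}\scir K_{xy,\alpha})$ in which the left factor is a coset of $\h\xy$ in $\G x$ and the right factor is a product of cosets of $\k\xy$ in $\G y$; hence $\r\xy\alpha\seq\gsq x y$, and, by the same reading of the definition, $\r\wwz\beta\seq\gsq w z$. The second step is then immediate: were $\r\xy\alpha\rp\r\wwz\beta$ non-empty, there would be an element $b$ lying both in the set of second coordinates of pairs in $\r\xy\alpha$, which is a subset of $\G y$, and in the set of first coordinates of pairs in $\r\wwz\beta$, which is a subset of $\G w$. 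Thus $b\in\G y\cap\G w$. But $y\ne w$, and the groups of the system $G$ are pairwise disjoint, so $\G y\cap\G w=\varnot$ --- a contradiction. Therefore $\r\xy\alpha\rp\r\wwz\beta=\varnot$ for all $\alpha<\kai\xy$ and $\beta<\kai\wwz$.

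I do not expect any genuine obstacle here; the proof is essentially a one-line disjointness argument. The only point that even merits a second glance is the bookkeeping in the first step --- verifying from the defining union in Definition~\refD{compro} that the second coordinates of pairs in $\r\xy\alpha$ land in $\G y$ while the first coordinates of pairs in $\r\wwz\beta$ land in $\G w$ --- and once that is recorded, the disjointness of $\G y$ and $\G w$ closes the argument at once.
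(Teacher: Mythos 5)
Your proof is correct and is exactly the intended argument: the lemma (quoted here from the companion paper on group relation algebras) rests on nothing more than $\r\xy\a\seq\gsq xy$ and $\r\wwz\b\seq\gsq wz$ (immediate from \refD{compro}, or from Partition \refL{i-vi}) together with the pairwise disjointness of the groups, so any middle element of a pair in the composition would have to lie in $\G y\cap\G w=\varnot$. The same disjointness reasoning is what the present paper itself invokes in analogous situations, e.g.\ in the atom-case analysis in the proof of Cycle Law \refT{clawc}.
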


\renewcommand\wi{\xy}
\renewcommand\wj{\yz}
\renewcommand\wk{\xz}

The most important case regarding the composition of two atomic
relations is when $y=w$.

\begin{theorem}[Composition Theorem]\labelp{T:compthm}  For
all pairs $\pair x y$ and $\pair y z$  in $\mc E$\co the following
conditions are equivalent\per
\begin{enumerate}
\item[(i)] The relation
$\r\xy 0 \rp\r\yz 0$ is in $A$\per
\item[(ii)] For each
$\lph<\kai\xy$ and each $\bt<\kai\yz$\comma the relation
$\r\xy\lph\rp\r\yz\bt$ is in $A$\per
\item[(iii)] For each
$\lph<\kai\xy$ and each $\bt<\kai\yz$\comma
\[\r \xy \a \rp \r \yz\b=\tbigcup\{\r \xz \g:
\hs \xz \g \seq \vphi \xy\mo[ \ks\xy \a\scir\hs \yz \b]\}\per\]
\item[(iv)]$ \h\xz\seq\vphi\xy\mo[\k\xy\scir\h\yz]$ and
$\hvphs\xy\rp\hvphs\yz=\hvphs\xz$\comma where
$\hvphs\xy$ and $\hvphs\xz$ are the mappings induced by
$\vphi\xy$ and $\vphi\xz$ on the quotient of $\G\wx$ modulo the
normal subgroup $\vphi\xy\mo[\k\xy\scir\h\yz]$\comma while
$\hvphs\yz$ is the isomorphism induced by $\vphi\yz$ on the
quotient of $\G\wy$ modulo the normal subgroup
$\k\xy\scir\h\yz$\per \end{enumerate} Consequently\co the set $A$
is closed under relational composition if and only if
\textnormal{(iv)} holds for all pairs  $\pair x y$ and $\pair y z$
in $\mc E$\po
\end{theorem}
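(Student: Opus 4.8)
The plan is to run the cycle of implications (iv)$\Rightarrow$(iii)$\Rightarrow$(ii)$\Rightarrow$(i)$\Rightarrow$(iv) and then to read off the concluding sentence from the resulting equivalence of (ii) and (iv). Two of the implications are immediate: (iii) exhibits every composite $\r\xy\lph\rp\r\yz\bt$ as a union of atoms $\r\xz\gm$, hence as a member of $A$, so (iii)$\Rightarrow$(ii), and (i) is the special case $\lph=\bt=0$ of (ii).

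The computational heart of the argument, needed for both remaining implications, is an \emph{unconditional} description of the composite relation. From the definitions one has $\r\xy 0=\{(p,q)\in\gsq x y: q\k\xy=\vphi\xy(p\h\xy)\}$ and likewise $\r\yz 0=\{(q,r)\in\gsq y z: r\k\yz=\vphi\yz(q\h\yz)\}$. I would then trace what happens as the witness $q$ ranges over the coset $\vphi\xy(p\h\xy)$ of $\k\xy$: the cosets $q\h\yz$ fill out the coset $\vphi\xy(p\h\xy)\scir\h\yz$ of the normal subgroup $\k\xy\scir\h\yz$ of $\G y$, and applying $\vphi\yz$ to these fills out a single coset of the normal subgroup $D$ of $\G z$ corresponding under $\vphi\yz$ to $(\k\xy\scir\h\yz)/\h\yz$. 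Writing $N=\vphi\xy\mo[\k\xy\scir\h\yz]$, this yields
\[\r\xy 0\rp\r\yz 0=\{(p,r)\in\gsq x z: rD=f(pN)\},\]
where $f\colon\G x/N\to\G z/D$ denotes the composite isomorphism $\hvphs\xy\rp\hvphs\yz$ of the two quotient isomorphisms. The same bookkeeping, now carrying the extra factors $\ks\xy\lph$ and $\ks\yz\bt$ through the computation and using (iv) to replace $f$ by $\hvphs\xz$, rewrites the general composite $\r\xy\lph\rp\r\yz\bt$ as precisely the union of atoms on the right-hand side of (iii); this is the implication (iv)$\Rightarrow$(iii). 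Throughout, the single point needing care is that $\k\xy$ and $\h\yz$ are both normal in $\G y$, so that their product is again a normal subgroup, every coset product that arises is a coset of the appropriate subgroup, and the induced isomorphisms compose as expected.

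For (i)$\Rightarrow$(iv) I would start from the displayed formula. The pair $(\ex x,\ex z)$ lies in $\r\xy 0\rp\r\yz 0$ (take the witness $q=\ex y$) and also lies in the atom $\r\xz 0$, so by \thref{T:disj} and \lmref{L:i-vi} the hypothesis $\r\xy 0\rp\r\yz 0\in A$ forces $\r\xz 0\seq\r\xy 0\rp\r\yz 0$. I would then use this inclusion twice. The pairs $(\ex x,r)$ in $\r\xz 0$ are exactly those with $r\in\k\xz$, and membership of such a pair in $\r\xy 0\rp\r\yz 0$ gives $rD=f(N)=D$, so $\k\xz\seq D$; the pairs $(p,\ex z)$ in $\r\xz 0$ are exactly those with $p\in\h\xz$, and membership gives $f(pN)=D=f(N)$, so $p\in N$, that is, $\h\xz\seq N$, the first clause of (iv). Finally, for an arbitrary $(p,r)\in\r\xz 0$ one has $r\k\xz=\vphi\xz(p\h\xz)$, so the $D$-coset of $\vphi\xz(p\h\xz)$ equals $rD=f(pN)$; since the right-hand side depends only on $pN$, so does the left, whence $\vphi\xz$ induces a map $\hvphs\xz\colon\G x/N\to\G z/D$ with $\hvphs\xz=f=\hvphs\xy\rp\hvphs\yz$, the second clause of (iv).

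Finally, the concluding sentence. Relative multiplication distributes over arbitrary unions and $A$ is closed under unions, so $A$ is closed under relative multiplication exactly when the composite of any two atoms lies in $A$. By \lmref{L:emptycomp} and the pairwise disjointness of the groups, such a composite is empty---hence in $A$---unless the two atoms have the forms $\r\xy\lph$ and $\r\yz\bt$ with $\pair x y$ and $\pair y z$ in $\mc E$, and for those it lies in $A$ precisely when condition (ii), equivalently (iv), holds for the pair $\pair x y$, $\pair y z$; hence $A$ is closed under relative multiplication if and only if (iv) holds for all such pairs. I expect the delicate step to be the unconditional coset computation underlying the displayed formula and its generalization to $\r\xy\lph\rp\r\yz\bt$, in particular keeping track of which induced isomorphism acts on which quotient group; once that is in hand, the rest is routine substitution into the resulting formula.
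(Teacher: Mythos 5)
Your proof is correct, but be aware that this paper does not actually prove the Composition Theorem: it appears in the summary of results imported from the companion paper \cite{giv1}, so there is no in-paper argument to compare yours against. Judged on its own merits, your route is the natural one and it checks out: (iii)$\Rightarrow$(ii)$\Rightarrow$(i) are immediate, your unconditional formula $R_{xy,0}\rp R_{yz,0}=\{(p,r): rD=f(pN)\}$ (with $N=\varphi_{xy}^{-1}[K_{xy}\scir H_{yz}]$, $D=\varphi_{yz}[K_{xy}\scir H_{yz}]$, and $f=\hat\varphi_{xy}\rp\hat\varphi_{yz}$) is valid with no hypotheses, and the derivation of (i)$\Rightarrow$(iv) by forcing $R_{xz,0}\seq R_{xy,0}\rp R_{yz,0}$ through the pair $(e_x,e_z)$ and then specializing to pairs $(e_x,r)$, $(p,e_z)$, and general $(p,r)$ is sound; the concluding sentence is also handled correctly via \refL{emptycomp} and complete distributivity of relational composition over unions. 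Two steps should be written out rather than waved at. First, in (iv)$\Rightarrow$(iii) the ``same bookkeeping'' for arbitrary $\alpha,\beta$ yields $R_{xy,\alpha}\rp R_{yz,\beta}=\{(p,r): rD=f(pN)\scir\varphi_{yz}[K_{xy,\alpha}\scir H_{yz,\beta}]\}$, and one must verify that, granted (iv), this coincides with the union in (iii); this uses that $\varphi_{xy}^{-1}[K_{xy,\alpha}\scir H_{yz,\beta}]$ is a coset of $N$ whose $\varphi_{xz}$-image is the coset $\hat\varphi_{yz}(K_{xy,\alpha}\scir H_{yz,\beta})$ of $D$ --- routine, but it is exactly where (iv) enters. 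Second, in (i)$\Rightarrow$(iv) your last step identifies the map $pN\mapsto\varphi_{xz}(pH_{xz})\scir D$ with the induced mapping $\hat\varphi_{xz}$ of condition (iv); a priori that induced map sends $pN$ to $\varphi_{xz}[pN]$, a coset of $\varphi_{xz}[N]$, so you still owe the equality $\varphi_{xz}[N]=D$. It follows from what you already have: taking $p\in N$ in your identity gives $\varphi_{xz}[N]\seq D$, and for $d\in D$, choosing $p$ with $\varphi_{xz}(pH_{xz})=dK_{xz}$ gives $f(pN)=dD=D$, hence $p\in N$ by injectivity of $f$, so $d\in\varphi_{xz}[N]$. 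With these two points made explicit, your argument is a complete and self-contained proof of the theorem.
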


\begin{cor}\labelp{C:compequiv}  If the set $A$ contains the
identity relation\comma then for any pairs $\pair x y$ and $\pair y
z$ in $\mc E$\co the following conditions are equivalent\per
\begin{enumerate}
\item[(i)] $\r\wi\a\rp\r\wj\b$ is in $A$ for some $\a<\kai\wi$ and
some $\b<\kai\wj$\per
\item[(ii)]$\r\wi\a\rp\r\wj\b$ is in $A$ for all $\a<\kai\wi$ and
all $\b<\kai\wj$\per
\end{enumerate}
\end{cor}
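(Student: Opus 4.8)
The plan is to derive (i)$\Rightarrow$(ii) from the Composition Theorem (\thref{T:compthm}) after cutting the problem down to the single instance $\a=\b=0$.  The implication (ii)$\Rightarrow$(i) needs no argument: for every pair in $\mc E$ the associated coset system is non-empty, so $\kai\xy\ge 1$ and $\kai\yz\ge 1$, whence a statement holding for all such indices holds for some.  For (i)$\Rightarrow$(ii), recall that clauses (i) and (ii) of \thref{T:compthm} are equivalent; so it is enough to show that if $\r\xy{\a_0}\rp\r\yz{\b_0}\in A$ for even one pair $\a_0,\b_0$, then $\r\xy 0\rp\r\yz 0\in A$.  The means of passing between instances will be relative multiplication by the ``diagonal'' atoms $\r\xx\g$, whose shape is forced by the hypothesis $\id U\in A$.

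First I would describe those diagonal atoms.  Since $\id U\in A$, the Partition Lemma (\lmref{L:i-vi}) gives $\gsq x x=\tbigcup_{\g<\kai\xx}\r\xx\g\in A$, so $\idd{\G x}=\id U\cap(\gsq x x)\in A$ for each $x\in I$; by the Identity Theorem (\thref{T:identthm1}) this forces $\vphi\xx$ to be the identity automorphism of $\G x/\{\ex x\}$, that is, $\h\xx=\k\xx=\{\ex x\}$.  Substituting this into \dfref{D:compro} shows that, for a suitable enumeration $\langle g_\g:\g<\kai\xx\rangle$ of $\G x$ with $g_0=\ex x$, the atom $\r\xx\g$ is exactly the graph of right translation by $g_\g$ in $\G x$.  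Consequently $\{\r\xx\g:\g<\kai\xx\}$ is closed under $\rp$, with unit $\r\xx 0=\idd{\G x}$ and with $\r\xx\g\rp\r\xx{\g'}=\r\xx{\g''}$ whenever $g_{\g''}=g_\g\scir g_{\g'}$; in particular each $\r\xx\g$ has a two-sided $\rp$-inverse among the $\r\xx{\g'}$.

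Next I would record the translation identities.  A direct computation from \dfref{D:compro}, valid for any $\pair u v$ in $\mc E$ and any $\g<\kai\uv$, shows that $\r{uu}{\sigma}\rp\r\uv\g$ and $\r\uv\g\rp\r{vv}{\tau}$ are again single atoms of the form $R_{uv,\delta}$, with $\delta$ readable straight off the definition; moreover, as $\sigma$ (respectively $\tau$) ranges over its coset index, $\r{uu}{\sigma}\rp\r\uv 0$ (respectively $\r\uv 0\rp\r{vv}{\tau}$) runs through \emph{every} $R_{uv}$-atom.  Applying this to the pairs $\pair x y$ and $\pair y z$ and to the atoms $\r\xy{\a_0}$, $\r\yz{\b_0}$, we get indices $\sigma,\tau$ with $\r{xx}{\sigma}\rp\r\xy 0=\r\xy{\a_0}$ and $\r\yz 0\rp\r{zz}{\tau}=\r\yz{\b_0}$; taking $\rp$-inverses in the translation groups of $\G x$ and $\G z$, there are then indices $\sigma^{*},\tau^{*}$ with $\r{xx}{\sigma^{*}}\rp\r\xy{\a_0}=\r\xy 0$ and $\r\yz{\b_0}\rp\r{zz}{\tau^{*}}=\r\yz 0$.

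Finally I would assemble the argument.  Assume $\r\xy{\a_0}\rp\r\yz{\b_0}\in A$.  This relation lies inside $\gsq x z$, so by the Boolean Algebra Theorem (\thref{T:disj}), the Partition Lemma, and the pairwise disjointness of the groups, $\r\xy{\a_0}\rp\r\yz{\b_0}=\tbigcup_i\r\xz{\g_i}$ for some indices $\g_i$.  Now consider $\r{xx}{\sigma^{*}}\rp\bigl(\r\xy{\a_0}\rp\r\yz{\b_0}\bigr)\rp\r{zz}{\tau^{*}}$.  By associativity of $\rp$ and the choices of $\sigma^{*},\tau^{*}$, it equals $\r\xy 0\rp\r\yz 0$; by distributivity of $\rp$ over unions and the translation identities, it also equals $\tbigcup_i\bigl(\r{xx}{\sigma^{*}}\rp\r\xz{\g_i}\rp\r{zz}{\tau^{*}}\bigr)$, a union of atoms of the form $R_{xz,\delta}$, hence a member of $A$.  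Thus $\r\xy 0\rp\r\yz 0\in A$, which is clause (i) of \thref{T:compthm}, and clause (ii) of that theorem then gives $\r\xy\a\rp\r\yz\b\in A$ for all $\a,\b$.  I expect the only real work to be the bookkeeping in the third paragraph --- extracting the exact index maps from \dfref{D:compro} and checking that these composites are single atoms that exhaust the $R_{uv}$-atoms; everything else is formal.  (One could even avoid \thref{T:compthm}: once $\r\xy 0\rp\r\yz 0\in A$ is known, the same manipulation, applied to $\r\xy\a\rp\r\yz\b=\r{xx}{\sigma}\rp\bigl(\r\xy 0\rp\r\yz 0\bigr)\rp\r{zz}{\tau}$ for suitable $\sigma,\tau$, yields (ii) directly.)
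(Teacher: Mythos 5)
Your argument is correct. Note that this corollary is one of the results the paper imports from \cite{giv1} without proof, so there is no in-paper proof to compare against; judged on its own terms, your reduction works. The two pillars both check out: (a) under the hypothesis $\id U\in A$, the Identity Theorem forces $H_{xx}=K_{xx}=\{e_x\}$ and $\varphi_{xx}=$ identity, so Definition \ref{D:compro} does make $R_{xx,\gamma}$ the graph of right translation by $g_\gamma$ (here the ``suitable enumeration'' is simply the given coset enumeration, since the cosets of $\{e_x\}$ are singletons and $H_{xx,0}=\{e_x\}$ by convention); and (b) the translation identities hold because $H_{xy}$ is normal in $G_x$ and $K_{xy}$ is normal in $G_y$: a short computation gives $R_{xx,\sigma}\relprodd R_{xy,\gamma}=R_{xy,\delta}$ with $K_{xy,\delta}=\varphi_{xy}[g_\sigma\scir H_{xy}]\scir K_{xy,\gamma}$, and $R_{xy,\gamma}\relprodd R_{yy,\tau}=R_{xy,\delta'}$ with $K_{xy,\delta'}=K_{xy,\gamma}\scir g_\tau\scir K_{xy}$, so these products are single atoms and sweep out all atoms as $\sigma$ (resp.\ $\tau$) varies, exactly as you claim. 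Two small points are worth making explicit when you write this up: you need $\pair xz\in\mc E$ (transitivity of $\mc E$) both to know that the atoms inside $G_x\times G_z$ are the relations $R_{xz,\gamma}$ and to apply the translation identities to them in the final step; and the normality of the subgroups is what makes $H_{xy,\mu}\scir g_\sigma^{-1}$ and $K_{xy,\gamma}\scir g_\tau$ again cosets, which is the crux of the ``bookkeeping'' you defer. With those noted, the proof is complete, and your parenthetical remark is also right: once $R_{xy,0}\relprodd R_{yz,0}\in A$ is obtained, the same translation trick yields condition (ii) directly, so the appeal to the Composition Theorem is a convenience rather than a necessity.
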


Putting together  the preceding theorems yields a characterization,
 purely in terms of the quotient isomorphisms, of when a
group pair gives rise to a complete and atomic set relation
algebra.

\begin{df}\labelp{D:cosfra} A \textit{group} \textit{frame}  is a group pair
\[\mc F=(\langle \G x:x\in
I\,\rangle\smbcomma\langle\vph_\xy:\pair x y\in \mc E\,\rangle)\]
satisfying the following  \textit{frame conditions} for all pairs
$\pair xy$ and $\pair yz$ in $\mc E$\per
\begin{enumerate}
\item[(i)]
$\vphi {xx}$ is the identity automorphism of $\G x/\{\e x\}$ for
all $x$\per
\item [(ii)]
$\vphi \yx=\vphi \xy\mo$\per
\item[(iii)] $\vphi \xy[\h \xy\scir\h\xz] =\k
\xy\scir\h \yz$ and $\vphi \yz[\k \xy\scir\h \yz] =\k
\xz\scir\k\yz$\per
\item[(iv)] $\hvphs \xy\rp\hvphs \yz=\hvphs \xz$\po
\end{enumerate}\qed\end{df}

Given a group frame \ $\mc F$, let $A$  be the collection of all
possible unions of relations of the form $\r\wi \a$ for $\pair x
y$ in $\mc E$ and $\a <\kai\wi$.  Call $A$ the set of
\textit{frame relations} constructed from $\mc F$.

\begin{theorem}[Group Frame Theorem]\labelp{T:closed} If $\mc F$ is
a group frame\co then the set  of frame relations
constructed from $\mc F$ is the universe of a complete\co
atomic\co measurable set relation algebra   with   base set and
unit \[U=\tbigcup  \{\G x:x\in I\}\qquad\text{and}\qquad
E=\tbigcup\{\cs Gx\times\cs Gy:\pair xy\in\mc E\}\]
respectively\per The atoms in this algebra are the relations of
the form $\r\wi \a$\comma and the subidentity atoms are the
relations of the form $\r\xx 0$\per The measure of $\r\xx 0$ is
just the cardinality of the group $\cs Gx$\per
\end{theorem}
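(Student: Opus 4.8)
The plan is to verify the relation-algebra axioms for $A$ directly, using the characterization theorems already established. First I would note that, thanks to the frame conditions (i)--(iv), the hypotheses of the Boolean Algebra Theorem, the Identity Theorem, the Converse Theorem, and the Composition Theorem are all met: condition (i) of the frame definition gives (by the Identity Theorem) that $\id U \in A$; condition (ii) gives (by the Converse Theorem) that $A$ is closed under converse; and conditions (iii) and (iv) together are exactly clause (iv) of the Composition Theorem (together with \lmref{L:emptycomp} for the case $y\ne w$), so $A$ is closed under relational composition. Hence $A$ is the universe of a complete, atomic Boolean set algebra of subsets of $E$ that is closed under converse and under relational composition, and it contains $\id U$. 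It remains only to check that these operations satisfy the relation-algebra identities --- associativity of composition, the involution laws for converse, the distributive laws, and Tarski's cycle law $(r\conv;(r;s)')\cdot s = 0$ (equivalently, $\id U$ is an identity for $;$, $\conv$ is an anti-automorphism of $;$, and the De Morgan–Tarski equivalences hold).

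Second, I would observe that composition and converse of binary relations on $U$ always satisfy the relation-algebra identities; this is the classical fact that the algebra of all binary relations on any set is a relation algebra. So once we know $A$ is a subalgebra of $\langle \wp(E), \cup, {}^-, \varnot, E, \id U, {}\relprodd{}, \conv\rangle$ --- i.e. a subset containing $\varnot$, $E$, $\id U$, closed under union, complementation (relative to $E$), converse, and composition --- it is automatically a relation algebra, and moreover a \emph{set} relation algebra with unit $E$. Closure under the Boolean operations and under converse and composition is exactly what the four cited theorems give us. Thus the substantive content of the proof is \emph{entirely} the translation of the frame conditions into the hypotheses of those theorems; there is no remaining equational verification.

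Third, for the statements about atoms I would invoke the Boolean Algebra Theorem directly: the atoms of $A$ are the relations $\r\xy\a$, and every element of $A$ is a union of these. A subidentity atom is an atom below $\id U$; since $\id U = \tbigcup\{\r\xx 0 : x\in I\}$ (by the Identity Theorem, each $\r\xx 0 = \id{\G x}$, and the $\G x$ partition $U$), and since distinct atoms are disjoint, the atoms below $\id U$ are exactly the $\r\xx 0$. To see that $A$ is \emph{measurable}, I must check that for each subidentity atom $\r\xx 0$ the square $\r\xx 0 \relprodd E \relprodd \r\xx 0$ is a sum of functional elements and count them. Unwinding the definitions, $\r\xx 0 \relprodd E \relprodd \r\xx 0 = \G x \times \G x = \tbigcup_{\g<\kai\xx}\r\xx\g$ --- this is where I expect the main technical work --- and then I would show each $\r\xx\g$ is functional, i.e. $\r\xx\g\conv \relprodd \r\xx\g \seq \id U$. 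Using frame condition (i), $\h\xx = \{\e x\}$ and $\vphi\xx$ is the identity, so $\r\xx\g = \tbigcup_{\dlt}\{\e x\g_\dlt\}\times\{\e x\g_\dlt g_\g\} = \{(a, ag_\g) : a\in\G x\}$ is the graph of right translation by the coset representative $g_\g$, which is visibly a bijection of $\G x$; hence $\r\xx\g$ is functional and nonzero, and there are exactly $\kai\xx = |\G x|$ of them. Therefore $\r\xx 0$ is measurable with measure $|\G x|$, and since $\id U$ is the sum of such atoms, $A$ is measurable.

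The step I expect to be the main obstacle is the computation $\r\xx 0 \relprodd E \relprodd \r\xx 0 = \G x\times\G x$ and the verification that each $\r\xx\g$ is the graph of a bijection of $\G x$: one has to be careful that $E$ restricted to $\G x\times\G x$ really is all of $\G x\times\G x$ (which follows since $\pair x x\in\mc E$) and that the coset enumeration $\langle \hs\xx\g : \g<\kai\xx\rangle$ of $\{\e x\}$ in $\G x$ is just an enumeration of the singletons $\{g_\g\}$ of all elements of $\G x$, so the $\r\xx\g$ run through exactly the $|\G x|$ right-translation graphs. Everything else is bookkeeping with the already-proven theorems.
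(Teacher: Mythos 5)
Your proposal is correct. A caveat on the comparison: the paper states this theorem without proof---it is recalled from \cite{giv1}---so the only in-paper benchmark is the proof of the analogous Coset Semi-frame Theorem (\refT{costhm}). Your measurability argument is essentially that proof specialized to the group case: the computation $\r\xx 0\relprodd E\relprodd\r\xx 0=\G x\times\G x$, the identification of each $\r\xx\a$ (using frame condition (i), so that the cosets of $\h\xx=\{\e x\}$ are singletons) as the graph of a right translation and hence a bijection of $\G x$, and the count $\kai\xx=|\G x|$ all match steps (1)--(8) of that proof. Where you take a different, and in fact simpler, route is in securing the relation-algebra axioms: since in the group case all operations are the genuine set-theoretic ones, you correctly observe that once the frame conditions are translated---via the Identity, Converse, and Composition Theorems together with \refL{emptycomp}---into the statement that $A$ contains $\id U$ and is closed under converse and composition, the axioms are inherited from the algebra of all subrelations of the equivalence relation $E$, and no equational verification remains; the coset analogue cannot use this shortcut, because $\otimes$ is not relational composition, and must instead invoke the four law theorems of Section 3. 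One point you may wish to make explicit: the measure is defined as the number of non-zero functional elements below the square, and since each $\r\xx\a$ is a bijection whose domain is all of $\G x$, no union of two or more of these atoms is functional, so that number really is $\kai\xx$; this is the same level of detail as the paper's own argument, so it is a polish item rather than a gap.
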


The  theorem justifies the following definition.

\begin{df}\labelp{D:gradef} Suppose that $\mc F$ is a group frame\po The
 set \ra\ constructed from $\mc F$ in   Group Frame \refT{closed} is
 called the (\textit{full}) \textit{group relation algebra} on $\mc F$ and is
 denoted by $\cra G F$ \opar and its universe by  $\craset G
 F$\cpar\po A \textit{general group relation algebra} is defined
 to be an algebra that is embeddable into a full group relation
 algebra\po \qed
\end{df}

%%%%%%%%%%%%%%%%%%%%%%%%%%%%%%%%%%%%%%%%%%%%%%%%%%%%%%%%%%%%%%%%%%%%%%%%%

\section{Coset  Systems}\labelp{S:8}

\renewcommand\wi{\xy}
\renewcommand\wj{\yz}
\renewcommand\wk{\xz}

Group relation algebras by themselves are not  sufficient to
represent all measurable relation algebras as will be seen in
\refS{sec5}. However, it is shown in \cite{ga} that if the operation
of composition in a group relation algebra is changed slightly, then
the resulting class of new algebras   is sufficient to represent all
measurable relation algebras. We call these new algebras
\textit{coset relation algebras}.

The operation of relative multiplication in a coset relation algebra is a kind of ``shifted"
relational composition. To accomplish this shifting, it is necessary to add one more
ingredient to a group pair  $\mc F=\pair G\vph$, namely a system of cosets
\[\langle \cc x y z:\trip x y z\in\ez 3\rangle\comma
\]
where $\ez 3$ is the set of all triples $\trip xyz$ such that the
pairs $\pair xy$ and $\pair yz$ are in $\mc E$, and for each such
triple, the set $\cc x y z$ is a coset of the normal subgroup $\hh$
in $\G\wx$.  Call the resulting triple \[\mc F=\trip G \vph C\] a
\textit{group triple}.

Define a new binary multiplication operation $\,\otimes\,$ on the
pairs of atomic relations in the  Boolean algebra $A$ of \refT{disj}
as follows.

\begin{df}\labelp{D:smult}  For pairs
$\pair \wx\wy$ and $\pair \wy \wz$ in $\mc E$\comma put
\[\r \xy \a \otimes \r \yz\b=\tbigcup\{\r \xz \g:
\hs \xz \g \seq \vphi \xy\mo[ \ks\xy \a\scir\hs \yz \b]\scir\cc x
y z\}\] for all $\lph<\kai \xy$ and all $\bt<\kai\yz$\comma and
for  all other  pairs $\pair \wx\wy$ and $\pair \ww \wz$ in $\mc
E$ with $\wy\neq \ww$, put
\[\r\xy\lph\otimes\r\wwz\bt=\varnothing
\]
for all $\lph<\kai \xy$ and $\bt<\kai\wwz$\per
 Extend $\,\otimes\,$ to all of $A$ by requiring it to distribute over
 arbitrary unions.  This means that for all subsets $ X$ and $ Y$ of the set of atoms in $A$
\[(\tbigcup X)\otimes(\tbigcup Y) = \tbigcup\{\r\xy\lph\otimes\r\wwz\bt:
\r\xy\lph\in  X \text{ and } \r\wwz
\bt\in Y\}\per\]
 \qed
\end{df}

Comparing  the formula defining $\r\xy\lph\otimes\r\yz\bt$ in
\refD{smult}  with the value of the relational composition
$\r\xy\lph\rp\r\yz\bt$ given in Composition \refT{compthm}(iii), it
is clear that they are very similar in form. In the first case,
however, the coset $\vphi\xy\mo[\ks\xy\lph\scir\hs\yz\bt] $ of the
composite group $\h\xy\scir\h\xz$ has been shifted, through coset
multiplication by $\cc  x y z$\comma to another  coset of
$\h\xy\scir\h\xz$, so that in general the value of the
$\,\otimes\,$-product and the value of relational composition on a
given pair of atomic relations will be different, except in certain
cases, for example, the case in which the value is the empty set.

Observe that the product $\r\xy\lph\otimes\r\wwz\bt$ is, by
definition, a union of atomic relations in $A$ and is therefore
itself a member of $A$.  Since $\,\otimes\,$ is extended to all of
$A$ so as to be completely distributive over unions, and since $A$
is closed under arbitrary unions, it follows that $A$ is
automatically closed under the operation $\,\otimes\,$.  It is not
necessary to impose any special conditions on the quotient
isomorphisms to ensure this closure, as was the case for relative
multiplication in group relation algebras. However, to ensure that
$A$ contains the identity relation and is closed under converse, it
is still necessary to require conditions (i) and (ii) from
\refD{cosfra}.
Conditions (iii) and (iv) in \refD{cosfra} ensure that $A$ is closed
under relational composition. In order to get a class of algebras
large enough to represent all measurable relation algebras, it is
necessary to weaken condition (iv), but condition (iii) can be
retained. In fact, condition (iv) of \refD{cosfra} has  to be
changed only slightly, as can be seen in \refD{semiframedef} below.

Every element of a group  induces an \textit{inner automorphism} of
the group.  In particular, the coset $\cc x y z$\comma which is an
element of the quotient group
\[\G\wx/(\h\xy\scir\h\xz)\comma\]
induces an inner automorphism $\tau_{xyz}$ of the quotient group
that is defined by
\[\tau_{xyz}(D) = \cc x y z\mo\scir D\scir\cc x y z
\]
for every coset $D$ of $\h\xy\scir\h\xz$\per  This automorphism
coincides with the identity automorphism of the quotient group just
in case the coset $\cc x y z$ is in the center of the
 quotient group, that is to say, just in case
\[\cc x y z\scir D
= D\scir \cc x y z\] for every coset $D$ of $\hh $\per

\begin{df}\labelp{D:semiframedef} A group triple
\[\mc F=\trip G \vph C
\]
is a \textit{pre-semi-frame} if  the following three
conditions are satisfied\per
\begin{enumerate}
\item[(i)]
$\vphi {xx}$ is the identity automorphism of $\G x/\{\e x\}$ for
all $x$ in $I$\per
\item [(ii)]
$\vphi \yx=\vphi \xy\mo$ whenever $ \pair x y$ is in $\mc E$\per
\item[(iii)]  $\vphi \xy[\h
\xy\scir\h \xz]=\k \xy\scir\h \yz$ whenever $\trip x y z$ is in $\ez
3$\per %
\end{enumerate}
It is a \textit{semi-frame} if\comma in addition\comma the following
fourth condition  is also satisfied.
\begin{enumerate}
\item[(iv)] $\hvphs \xy\rp\hvphs \yz=\tau_{xyz}\rp \hvphs
\xz$ whenever $\trip x y z$ is in $\ez 3$\per
\end{enumerate} Conditions (i)--(iv) are called the \emph{semi-frame conditions}\per\qed
\end{df}

In condition (iv) of this  definition, it is understood that
$\hvphs\xy$, $\hvphs\yz$, and $\hvphs\xz$ are the induced
isomorphisms described in Composition \refT{compthm}. They  are well
defined by semi-frame condition (iii).

If the group triple  $\mc F$ is a pre-semi-frame, then the  Boolean
set algebra $A$ contains the identity relation on its base set (by
Identity \refT{identthm1}), and is closed under converse (by
Converse \refT{convthm1}) and under $\,\otimes\,$ (by \refD{smult}).
Consequently, it is permissible to form the algebra
\[\cra C F =\langle
A\smbcomma\cup\smbcomma\diff
\smbcomma\otimes\smbcomma\mo\smbcomma\id U\rangle\per
\]
Of  course, $\cra CF$ need not be a relation algebra, that is to
say, an abstract algebra of the form \[\f A=( A\smbcomma
+\smbcomma -\smbcomma ;\smbcomma\,\conv\smbcomma\ident)  \]  in
which the following axioms are valid.
\begin{enumerate}
 \item [(R1)] $r+s=s+r$\per \item
[(R2)] $r+(s+t)=(r+s)+t$\per \item [(R3)] $-(-r+s) +
-(-r+-s)=s$\per
\item[(R4)]
$r;(s;t)=(r;s);t$\per
\item[(R5)]
$r;\ident=r$\per
\item[(R6)] $r\conv\conv=r$\per
\item[(R7)]
$(r;s)\conv=s\conv;r\conv$\per
 \item[(R8)] $(r+s);t=r;t +s;t$\per
\item[(R9)]
$(r+s)\conv=r\conv +s\conv$\per
\item[(R11)] $(r;s)\cdot t=0$\quad implies\quad $(r\conv;t)\cdot
s=0$\per
\end{enumerate} (On the basis of the other axioms, (R11) is equivalent
to the original law (R10) that Tarski used as the tenth axiom---see,
for example, Definition 2.1 in Givant\,\cite{giv18}. Consequently,
we will not refer to (R10) again.)

Certain relation algebraic axioms are, however, automatically valid
in $\cra CF$.  For example, the \textit{Boolean axioms} (R1)--(R3)
are all valid, because the Boolean part of $\cra CF$ is a complete
and atomic Boolean set algebra. The \textit{first involution law}
(R6) involves only the operation of converse, so it is valid in
$\cra CF$. The operation $\,\otimes\,$ is distributive over
arbitrary unions, as is the operation of converse, so the
\textit{distributive axioms for relative multiplication  and
converse over addition}\comma (R8) and (R9) respectively, are valid
in $\cra CF$.

Each of  the remaining four axioms\comma the \textit{associative law
for relative multiplication} (R4), the \textit{identity law} (R5),
the \textit{second involution law} (R7), and the \textit{cycle law}
(R11) may fail in $\cra C F$. It is therefore   important to
impose conditions on the coset system of a pre-semi-frame that
characterize when each of these axioms does hold in $\cra CF$. This
task is simplified by certain observations. Three of the axioms,
namely (R4), (R5), and (R7), are equations, and one of them, namely
(R11), is an implication between two equations of the form
$\sigma=0$.  Each of the equations involved is \textit{positive} in
the sense that its terms are constructed from variables and constant
symbols using only the operation symbols for addition,
multiplication, relative multiplication, and converse. In
particular, there is no occurrence of the operation symbol for
complement.  Each of the axioms is also \textit{regular} in the
sense that no variable occurs more than once on either side of an
equation. It is a well-known result that positive, regular
equations,  and   implications between positive, regular equations of
the form $\sigma=0$, hold  in an atomic relation algebra (or in any
Boolean algebra with completely distributive operators) just in case
they hold for all atoms (see, for example, Corollaries 19.26 and
19.28 in Givant\,\cite{giv18b}). Thus, to verify that any one of
these axioms holds in $\cra C F$ under certain hypotheses on the
coset system, it suffices to verify that it holds for all atomic
relations.

We begin with  a lemma that says    equalities between
unions of atomic relations are equivalent to the corresponding coset
equalities.

\begin{lm}\labelp{L:auxlemma} Let $\mc F$ be a pre-semi-frame\comma and
$\trip x y z$ a triple  in $\ez 3$\per If  $D_0$ and $D_1$ are
each unions of cosets of $\hh$\comma  then the following
conditions are equivalent\per
\begin{enumerate}
\item[(i)]
$ D_0=D_1$\per
\item[(ii)] $\tbigcup\{\r\xz\gm:\hs\xz\gm\seq
D_0\}=\tbigcup\{\r\xz\x:\hs\xz\x\seq D_1\}$\per
%\item[(iii)]$\tbigcup\{\r\zx\gm:\hs\xz\gm\seq
%D_0\}=\tbigcup\{\r\zx\x:\hs\xz\x\seq D_1\}$\per
\end{enumerate}
\end{lm}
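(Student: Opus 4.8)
The plan is to prove the two implications separately, with the nontrivial direction being (ii) $\Rightarrow$ (i). The direction (i) $\Rightarrow$ (ii) is immediate: if $D_0=D_1$ then the sets of indices $\gm$ with $\hs\xz\gm\seq D_0$ and with $\hs\xz\gm\seq D_1$ coincide, hence the two unions of atomic relations $\r\xz\gm$ are literally the same set.

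For (ii) $\Rightarrow$ (i), the key observation is that each of $D_0$ and $D_1$, being a union of cosets of $\h\xz$ in $\G\wx$ (note $\hh=\h\xy\scir\h\xz$ contains $\h\xz$, and by pre-semi-frame condition (iii) applied with the triple $\trip xyz$ we have control over these normal subgroups; in fact since $D_0,D_1$ are unions of cosets of $\hh$ they are a fortiori unions of cosets of $\h\xz$, which is all we need), can be recovered from the family of relations $\r\xz\gm$ that sit below it. Concretely, I would first recall from \refD{compro} that $\r\xz\gm=\tbigcup_{\dlt<\kai\xz} \hs\xz\dlt\times(\ks\xz\dlt\scir\ks\xz\gm)$, so the ``left projection'' of $\r\xz\gm$ onto the first coordinate is all of $\G\wx$, but each fixed left coordinate $\hs\xz\dlt$ is paired exactly with the coset $\ks\xz\dlt\scir\ks\xz\gm$ on the right. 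In particular, restricting to left coordinate in $\h\xz=\hs\xz 0$, the relation $\r\xz\gm$ meets $\h\xz\times\G\wy$ precisely in $\h\xz\times(\k\xz\scir\ks\xz\gm)=\hs\xz 0\times\ks\xz\gm$ — wait, more useful is the other side: I want to read off which $\hs\xz\gm$ lie in $D_0$. So instead I use the Partition Lemma (\lmref{L:i-vi}): the $\r\xz\gm$ partition $\gsq xz$, and $\hs\xz\gm$ is the unique coset appearing as a left coordinate paired, via $\dlt=0$, with right coordinate $\ks\xz 0\scir\ks\xz\gm=\k\xz\scir\ks\xz\gm=\ks\xz\gm$; thus the pair $(h,\vph\xz(h\scir\hs\xz\gm))$ for $h\in\h\xz$ — hmm, let me streamline.

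The cleanest route: given the equality in (ii), apply it and intersect both sides with $\G\wx\times\k\xz$ — no. Let me instead argue that from the union $\tbigcup\{\r\xz\gm:\hs\xz\gm\seq D_0\}$ one recovers $D_0$ as the set of all $a\in\G\wx$ such that $(a,b)$ lies in that union for some $b\in\k\xz$ with a suitable property; but since by hypothesis the coset systems satisfy $\hs\xz 0=\h\xz$, and since $\r\xz\gm$ contains, for the left-coordinate block $\hs\xz\gm$ itself, the right coordinate $\vph\xz(\hs\xz\gm\scir\hs\xz\gm)$... The robust statement I will actually invoke is that the map $\gm\mapsto\hs\xz\gm$ is a bijection between $\{\gm:\gm<\kai\xz\}$ and the cosets of $\h\xz$ in $\G\wx$, the relations $\r\xz\gm$ are pairwise disjoint and nonempty (Partition Lemma), and distinct relations $\r\xz\gm$ are genuinely distinct (Boolean Algebra \refT{disj}). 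Hence the correspondence between subsets $S\seq\kai\xz$ and unions $\tbigcup\{\r\xz\gm:\gm\in S\}$ is injective. Applying this with $S_i=\{\gm:\hs\xz\gm\seq D_i\}$, equality of the two unions forces $S_0=S_1$, i.e.\ the set of cosets of $\h\xz$ contained in $D_0$ equals the set contained in $D_1$. Since $D_0$ and $D_1$ are each unions of cosets of $\h\xz$ (being unions of cosets of the larger subgroup $\hh$), each equals the union of the cosets of $\h\xz$ it contains, so $D_0=D_1$.

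The main obstacle — really the only subtlety — is making sure that ``union of cosets of $\hh$'' legitimately entails ``union of cosets of $\h\xz$,'' which requires $\h\xz\seq\hh=\h\xy\scir\h\xz$; this holds because $\h\xy$ is a subgroup (hence contains $\ex\wx$), so $\h\xz=\{\ex\wx\}\scir\h\xz\seq\h\xy\scir\h\xz$. One must also make sure the indexing set $\kai\xz$ and the coset enumeration $\langle\hs\xz\gm\rangle$ is the one fixed globally (it is, per the standing conventions in \refS{sec2}), so that the $\r\xz\gm$ appearing on both sides of (ii) are the atoms of $A$ indexed consistently. Given these points, the injectivity of $S\mapsto\tbigcup\{\r\xz\gm:\gm\in S\}$ from \refT{disj} closes the argument with no computation.
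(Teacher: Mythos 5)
Your proof is correct and rests on essentially the same ingredients as the paper's: the Partition Lemma (the atoms $\r\xz\gm$ are nonempty and pairwise disjoint) together with the observation that $D_0$ and $D_1$, being unions of cosets of $\h\xy\scir\h\xz$, are determined by the cosets of $\h\xz$ they contain. The only difference is presentational---the paper runs the direction (ii)$\Rightarrow$(i) contrapositively, exhibiting a coset of $\h\xy\scir\h\xz$ inside one union and disjoint from the other, whereas you argue directly via the injectivity of the correspondence between sets of atoms and their unions.
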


\begin{proof} Condition (i) obviously implies
(ii). To establish the reverse implication, assume $D_0\neq D_1$\per
There must then be a coset $M$ of the subgroup $\hh$ that is
included  in one of the unions, say $D_0$, but not the other,
$D_1$\per It follows that $M$ must be disjoint from each of the
cosets in $D_1$, since two cosets of a subgroup are either equal or
disjoint. In particular,   each coset $\hs\xz\gm$ of $\h\xz$ that is
included in $M$  must be disjoint from $D_1$\comma so  the
corresponding relation $\r\xz\gm$, which is included in the
left-hand side of (ii), by assumption, must be disjoint from the
right-hand side of (ii), by  Partition \refL{i-vi}.
\end{proof}

Turn now to  the task of finding necessary and sufficient conditions
for various relation algebraic laws to hold in the algebra $\cra C
F$, and begin with the identity law (R5). This law is positive and
regular, so it suffices to characterize when it holds for all atomic
relations in $\cra CF$.

\begin{theorem}[Identity Law Theorem]\labelp{T:identthm2} Let $\mc F$ be
a pre-semi-frame\comma and
$\pair x y$ a pair in  $\mc E$\po  The following conditions are
equivalent\per
\begin{enumerate}\item[(i)] $
\r\xy \a\otimes \id U = \r\xy \a$ for some $\a<\kai\xy$\per
\item[(ii)] $
\r\xy \a\otimes \id U = \r\xy \a$ for all $\a<\kai\xy$\per
\item[(iii)] $\r\xy \a\otimes \r\yy 0 = \r\xy \a$ for some
$\a<\kai\xy$\per
\item[(iv)] $
\r\xy \a\otimes \r\yy 0 = \r\xy \a$ for all $\a<\kai\xy$\per
\item[(v)] $\cc x y y=\h\xy$\per
\end{enumerate}
Consequently, the identity law holds in the  algebra $\cra C F$ if
and only if \textnormal{(v)} holds for all pairs $\pair x y$ in $\mc
E$\po
\end{theorem}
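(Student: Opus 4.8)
The strategy is the standard one for these "law theorems": since the identity law (R5) is positive and regular, it holds in $\cra CF$ iff it holds for all atoms, which is exactly the content of statements (i)–(iv). So the real work is to prove the cycle of equivalences (i)$\Leftrightarrow\cdots\Leftrightarrow$(v) for a fixed pair $\pair xy$ in $\mc E$, and then the final sentence follows by quantifying over all pairs. I would organize the proof around the chain (v)$\Rightarrow$(iv)$\Rightarrow$(iii)$\Rightarrow$(i) and (v)$\Rightarrow$(ii)$\Rightarrow$(i), together with (i)$\Rightarrow$(v); the implications (iv)$\Rightarrow$(iii) and (ii)$\Rightarrow$(i) are trivial (universal implies existential), so the substantive content is (v)$\Rightarrow$(iv), (v)$\Rightarrow$(ii), and (i)$\Rightarrow$(v), plus noting that (iii) and (i) are essentially the same statement since $\id U$ restricted to the relevant square is $\r\yy 0$ (by Identity \refT{identthm1}(ii), using that $\mc F$ is a pre-semi-frame so condition (i) of \refD{semiframedef} holds).

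**The key computation.** Everything reduces to unwinding \refD{smult} in the case $\bt = 0$. By definition,
\[
\r\xy\lph\otimes\r\yy 0=\tbigcup\{\r\xz\gm:\hs\xz\gm\seq\vphi\xy\mo[\ks\xy\lph\scir\hs\yy 0]\scir\cc xyy\}.
\]
Here $z=y$, so $\h\xz=\h\xy\scir\h\xy=\h\xy$ (and the "composite group" $\hh$ is just $\h\xy$), while $\hs\yy 0=\h\yy=\{\e y\}$ by the convention that the zeroth coset is the normal subgroup, together with pre-semi-frame condition (i) forcing $\h\yy=\{\e y\}$. Thus $\ks\xy\lph\scir\hs\yy 0=\ks\xy\lph$, and $\vphi\xy\mo[\ks\xy\lph]=\hs\xy\lph$ by the definition of the associated coset system. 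The defining set therefore becomes
\[
\r\xy\lph\otimes\r\yy 0=\tbigcup\{\r\xy\gm:\hs\xy\gm\seq\hs\xy\lph\scir\cc xyy\}.
\]
Now apply \refL{auxlemma} with the triple $\trip xyy$: the right-hand side above equals $\r\xy\lph=\tbigcup\{\r\xy\gm:\hs\xy\gm\seq\hs\xy\lph\}$ precisely when $\hs\xy\lph\scir\cc xyy=\hs\xy\lph$ as complexes. This gives the clean equivalence: $\r\xy\lph\otimes\r\yy 0=\r\xy\lph$ iff $\hs\xy\lph\scir\cc xyy=\hs\xy\lph$. For $\lph=0$ this says $\cc xyy=\h\xy$, which is (v); and conversely $\cc xyy=\h\xy$ (being the identity of the quotient group $\G x/\h\xy$) immediately gives $\hs\xy\lph\scir\cc xyy=\hs\xy\lph$ for every $\lph$. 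This simultaneously yields (v)$\Rightarrow$(iv) and (iii)$\Rightarrow$(v) (taking the witnessing $\lph$; note $\hs\xy\lph\scir\cc xyy=\hs\xy\lph$ forces $\cc xyy=\h\xy$ by cancellation in the group, since $\cc xyy$ is a coset of $\h\xy$ and multiplying a coset by another coset returns the first coset only when the second is the identity coset). The $\id U$ versions (i), (ii) follow from the $\r\yy 0$ versions: restricted to $\G x\times\G y$ we have $\id U\cap(\G x\times\G y)$ contributing nothing unless $x=y$, and in any case the distributivity in \refD{smult} together with \refL{emptycomp} shows $\r\xy\lph\otimes\id U=\r\xy\lph\otimes\r\yy 0$ because the only atom below $\id U$ that can compose nontrivially on the right of $\r\xy\lph$ is $\r\yy 0$.

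**Main obstacle.** The only place demanding care is verifying cleanly that $\r\xy\lph\otimes\id U=\r\xy\lph\otimes\r\yy 0$, i.e. that among all subidentity atoms $\r\ww\ww 0$ summing to $\id U$, only $\r\yy 0$ survives when multiplied on the right of $\r\xy\lph$ under $\,\otimes\,$. This is where \refL{emptycomp} does its job: $\r\xy\lph\otimes\r\ww\ww 0=\varnot$ whenever $y\neq w$ (second clause of \refD{smult}), so only the $w=y$ term contributes. The rest is bookkeeping: checking that $\hs\yy 0=\{\e y\}$, that $\h\xz=\h\xy$ when $z=y$, that $\vphi\xy\mo[\ks\xy\lph]=\hs\xy\lph$, and invoking \refL{auxlemma} — all routine once the pre-semi-frame conditions are in place. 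One subtlety worth a sentence: in deriving (v) from (iii), the step "$\hs\xy\lph\scir\cc xyy=\hs\xy\lph$ implies $\cc xyy=\h\xy$" uses that cosets of a fixed subgroup form a group under complex multiplication, so left multiplication by $\hs\xy\lph$ is a bijection on that group; since it fixes $\cc xyy$ and also fixes $\h\xy$ (as $\hs\xy\lph\scir\h\xy=\hs\xy\lph$), we get $\cc xyy=\h\xy$.
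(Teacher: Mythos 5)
Your proof is correct and follows essentially the same route as the paper: reduce $\r\xy\a\otimes\id U$ to $\r\xy\a\otimes\r\yy 0$ using the decomposition of $\id U$ into subidentity atoms, distributivity, and the vanishing clause of \refD{smult}, then unwind the definition of $\,\otimes\,$ and invoke \refL{auxlemma} to reduce the atomic identity law to the coset equation $\hs\xy\a\scir\cc xyy=\hs\xy\a$, i.e.\ to $\cc xyy=\h\xy$ by cancellation. The only nit is attributing the vanishing of $\r\xy\a\otimes\r\www 0$ for $w\neq y$ to \refL{emptycomp} (which concerns relational composition $\rp$, not $\,\otimes\,$); the correct justification is the second clause of \refD{smult}, which you state in the same sentence.
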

\begin{proof}   Identity \refT{identthm1} and
semi-frame condition (i) imply that
\begin{equation*}%\tag{1}\labelp{Eq:identlaw1}
\id U =\tsbigc_{\ww\in I}\r\www 0\per \end{equation*} Therefore,
\begin{equation*}\tag{1}\labelp{Eq:identlaw2}
\r\xy\a\otimes\id U=\tsbigc_{\ww\in I}\r\xy\a\otimes\r\www
0=\r\xy\a\otimes \r\yy 0\comma\end{equation*} by the
distributivity of $\,\otimes\,$ over arbitrary unions, and the
fact  that \[\r\xy\a\otimes\r\www 0=\varnot\]  whenever
$\ww\neq\wy$. The equivalences of (i) with (iii), and of (ii) with
(iv), are  immediate consequences of \refEq{identlaw2}.

We show the equivalence of (iii) and (v), from which it follows
trivially that conditions (iii), (iv), and (v) are all equivalent.
We have by \refD{smult}, the convention that $\hs\yy 0=\{e_y\}$, and
semi-frame condition (ii) and the convention that
$\ks\xy\a=\vphi\xy(\hs\xy\a)$. Now, (iii) holds, by \refL{auxlemma}
just in case $\hs\xy\a\scir\cc x y y = \hs\xy\a$, and this last
equality holds just in case $\cc x y y =\h\xy$, which is just
condition (v). This establishes the equivalence of conditions
(iii)--(v), and hence of all five conditions, in the statement of
the theorem.

The identity law holds in $\cra C F$ just in case it holds for all
atoms $\r\xy\a$\per Apply  the equivalence of (ii) and (v) in the
the statement of the theorem to conclude that the identity law holds
in $\cra CF$ just in case $\cc x y y=\h\xy$ for all pairs $\pair x
y$ in $\mc E$\per
\end{proof}

Take up now the task of characterizing when the cycle law (R11)
holds. It suffices to characterize when this implication holds for
atoms, and for atoms $r$, $s$, and $t$, the implication is
equivalent to the following \textit{atomic form of the cycle law}:
\[s\le r\conv;t\qquad\text{implies}\qquad t\le r;s\per\]

\begin{theorem}[Cycle Law Theorem]\labelp{T:clawc} Let $\mc F$ be a pre-semi-frame\comma and
 $\trip x y z$  a triple in $\ez 3$\po  The
following conditions are equivalent\per
\begin{enumerate}
 \item[(i)]  If
$\r\yz\b\seq\r\xy\a\mo\otimes\r\xz\g$\comma then
$\r\xz\g\seq\r\xy\a\otimes\r\yz\b$\comma for some
$\a<\kai\xy$\comma $\b<\kai\yz$\comma and $\g<\kai\xz$\per
\item[(ii)]  If
$\r\yz\b\seq\r\xy\a\mo\otimes\r\xz\g$\comma then
$\r\xz\g\seq\r\xy\a\otimes\r\yz\b$\comma for all
$\a<\kai\xy$\comma $\b<\kai\yz$\comma and $\g<\kai\xz$\per
\item[(iii)]$\vphi\xy[\cc x y z]=\cc y x z\mo$\per
\end{enumerate} Consequently\comma the cycle law holds in
 the algebra $\cra C F$ just in case \textnormal{(iii)} holds for all triples
 $\trip x y z$ in $\ez 3$\per
\end{theorem}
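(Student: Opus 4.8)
My plan is to follow the pattern of the Identity Law \refT{identthm2}: reduce the cycle law to its atomic form on the atoms of $A$, translate the two atomic inclusions appearing in it into coset equations by means of \refD{smult} and \refL{auxlemma}, transport one of them into the group of the other by the induced isomorphism, and then read off condition (iii) by cancellation. For the reduction: by the remarks just preceding the theorem it suffices to verify that, for all atoms $r,s,t$ of $A$, if $s\le r\mo\otimes t$ then $t\le r\otimes s$. Since for a pre-semi-frame $A$ is closed under converse and $r\mo$ is a relation of type $\pair yx$ when $r=\r\xy\a$ (Converse \refT{convthm1}), inspection of \refD{smult} shows that the hypothesis $s\le r\mo\otimes t$ can hold only when $r=\r\xy\a$, $s=\r\yz\b$, $t=\r\xz\g$ for some triple $\trip xyz$ in $\ez3$; in all other cases the implication is vacuously true. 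So I fix such a triple and study, for given $\a<\kai\xy$, $\b<\kai\yz$, $\g<\kai\xz$, the implication $\r\yz\b\seq\r\xy\a\mo\otimes\r\xz\g\ \Rightarrow\ \r\xz\g\seq\r\xy\a\otimes\r\yz\b$.

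\emph{Translation into coset equations.} Let $M$ be the unique coset of $\hh$ in $\G x$ containing $\hs\xz\g$. By \refD{smult} and the Partition \refL{i-vi} the atoms in $\r\xy\a\otimes\r\yz\b$ are pairwise disjoint, so $\r\xz\g$ lies below it exactly when $\hs\xz\g\seq\vphi\xy\mo[\ks\xy\a\scir\hs\yz\b]\scir\cc xyz$; since the right-hand side is a single coset of $\hh$, \refL{auxlemma} converts this into the coset equation
\[M=\vphi\xy\mo[\ks\xy\a\scir\hs\yz\b]\scir\cc xyz .\]
For the hypothesis I first rewrite $\r\xy\a\mo$. By the Converse \refT{convthm1} and Convention \refCo{con11}, $\r\xy\a\mo=\r\yx{\a'}$ with $\hs\xy{\a'}=\hs\xy\a\mo$; moreover $\ks\yx{\a'}=\hs\xy\a\mo$ under that convention, and semi-frame condition (ii) gives $\vphi\yx\mo=\vphi\xy$. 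Applying \refD{smult} to the pairs $\pair yx$ and $\pair xz$ (the triple $\trip yxz$ again lies in $\ez3$) and arguing as before, the inclusion $\r\yz\b\seq\r\xy\a\mo\otimes\r\xz\g$ becomes the coset equation
\[N=\vphi\xy[\hs\xy\a\mo\scir\hs\xz\g]\scir\cc yxz ,\]
where $N$ is the unique coset of $\h\yx\scir\h\yz=\k\xy\scir\h\yz$ in $\G y$ containing $\hs\yz\b$.

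\emph{Comparison and conclusion.} Now I transport the first equation into $\G y$ by applying the induced isomorphism $\hvphs\xy$ of $\G x/(\hh)$ onto $\G y/(\k\xy\scir\h\yz)$ of Composition \refT{compthm}, which is well defined here by semi-frame condition (iii) and carries a coset $D$ of $\hh$ to $\vphi\xy[D]$. Since $\hvphs\xy$ is a group isomorphism, $\vphi\xy$ inverts the operation $\vphi\xy\mo[\,\cdot\,]$, and $\vphi\xy[\cc xyz]=\hvphs\xy(\cc xyz)$, the first equation becomes $\vphi\xy[M]=(\ks\xy\a\scir\hs\yz\b)\scir\vphi\xy[\cc xyz]$. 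Using the identity $\hs\xy\a\mo\scir\hs\xz\g=(\hh\scir\hs\xy\a)\mo\scir M$ in $\G x/(\hh)$ together with the homomorphism property, the second equation can be solved for $\vphi\xy[M]$ and brought to the form $\vphi\xy[M]=(\ks\xy\a\scir\hs\yz\b)\scir\cc yxz\mo$. Thus, for every $\a,\b,\g$, the conclusion and the hypothesis of the atomic cycle law translate into two equations of the common shape $\vphi\xy[M]=(\ks\xy\a\scir\hs\yz\b)\scir V$ in $\G y/(\k\xy\scir\h\yz)$, differing only in the last factor: $V=\vphi\xy[\cc xyz]$ for the conclusion, $V=\cc yxz\mo$ for the hypothesis. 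Cancelling the common prefix $\ks\xy\a\scir\hs\yz\b$ then finishes the argument: if $\vphi\xy[\cc xyz]=\cc yxz\mo$, the two equations coincide for all $\a,\b,\g$, so the atomic implication --- in fact the corresponding equivalence --- holds for all indices, which proves (iii) $\Rightarrow$ (ii) $\Rightarrow$ (i); conversely, if (i) holds, then specializing to indices for which the hypothesis equation is realized (such indices exist, since $M$ runs through every coset of $\hh$ as $\g$ varies and $\ks\xy\a\scir\hs\yz\b$ through every coset of $\k\xy\scir\h\yz$ as $\b$ varies) forces the conclusion equation as well, and cancelling the common prefix yields $\vphi\xy[\cc xyz]=\cc yxz\mo$, i.e.\ (iii). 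The closing assertion follows because, by the reduction step, the cycle law holds in $\cra C F$ precisely when the atomic implication holds for every triple in $\ez3$ and all indices, hence precisely when (ii) --- equivalently (iii) --- holds for every triple in $\ez3$.

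\emph{The main obstacle.} The real work lies in the coset bookkeeping of the two translation steps --- evaluating $\r\xy\a\mo$ correctly through Convention \refCo{con11} (so that $\vphi\yx\mo$ becomes $\vphi\xy$ and $\ks\yx{\a'}$ becomes $\hs\xy\a\mo$), and, above all, checking that after applying $\vphi\xy$ the conclusion's coset equation and the rearranged hypothesis's coset equation acquire \emph{the same} left side $\vphi\xy[M]$ and \emph{the same} prefix $\ks\xy\a\scir\hs\yz\b$, so that they can differ only in the factor $V$. That alignment, on which the cancellation rests, uses the identity $\hvphs\xy(\cc xyz)=\vphi\xy[\cc xyz]$, the homomorphism property of $\hvphs\xy$, and careful tracking of left versus right multiplication, since the groups need not be abelian and the shifting cosets $\cc xyz$ act on the right.
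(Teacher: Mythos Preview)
Your proposal is correct and follows essentially the same strategy as the paper's proof: reduce to atoms, translate both atomic inclusions into coset equations via \refD{smult}, the Converse \refT{convthm1}, and \refCo{con11}, bring them to a common form, and cancel the common factor to isolate condition (iii). The only difference is cosmetic: the paper keeps both equations in $\G\wx/(\hh)$ (its equations (13) and (15), both with common term $\vphi\xy\mo[\ks\xy\a\scir\hs\yz\b]$ and differing only in the shift $\vphi\xy\mo[\cc yxz]$ versus $\cc xyz\mo$), whereas you transport them by $\hvphs\xy$ to $\G\wy/(\k\xy\scir\h\yz)$ so that they share the prefix $\ks\xy\a\scir\hs\yz\b$ and differ in $\vphi\xy[\cc xyz]$ versus $\cc yxz\mo$; either way the cancellation is the same and yields (iii).
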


\begin{proof}
Fix indices $\a<\kai\xy$\comma $\b<\kai\yz$\comma and
$\g<\kai\xz$\comma with the goal of establishing the equivalence
of conditions (i) and (iii). Choose $\dlt<\kai\xy$ so that
\begin{align*}\hs\xy\a\mo&=\hs\xy\dlt\comma\tag{1}\labelp{Eq:clawb1}\\
\intertext{and observe that}
\r\xy\a\mo&=\r\yx\dlt\comma\tag{2}\labelp{Eq:clawb2.1}\\
\intertext{by semi-frame condition (ii) and Converse
\refT{convthm1}. Semi-frame condition (ii) and  \refCo{con11}
imply that} \vphi\xy\mo&=\vphi\yx\tag{3}\labelp{Eq:clawb2}\\
\intertext{and}
\ks\yx\dlt&=\hs\xy\dlt\per\tag{4}\labelp{Eq:clawb3}
\end{align*} Combine
\refEq{clawb1}--\refEq{clawb3}, and use the definition of
$\,\otimes\,$, to arrive at
\begin{align*}
\r\xy\a\mo\otimes\r\xz\g&=\r\yx\dlt\otimes\r\xz\g\\
&=\tbigcup\{\r\yz\x:\hs\yz\x\seq\vphi\yx\mo[\ks\yx\dlt\scir\hs\xz\g]\scir\cc
y x z\}\\
&=\tbigcup\{\r\yz\x:\hs\yz\x\seq\vphi\yx\mo[\hs\xy\dlt\scir\hs\xz\g]\scir\cc
y x z\}\\
&=\tbigcup\{\r\yz\x:\hs\yz\x\seq\vphi\xy[\hs\xy\a\mo\scir\hs\xz\g]\scir\cc
y x z\}\per
 \end{align*} It follows from this string of
equalities and Partition \refL{i-vi}  that the inclusion
\begin{equation*}\tag{5}\labelp{Eq:clawb4}
\r\yz\b\seq\r\xy\a\mo\otimes\r\xz\g \end{equation*} is equivalent
to the inclusion
\begin{equation*}\tag{6}\labelp{Eq:clawb5}
\hs\yz\b\seq\vphi\xy[\hs\xy\a\mo\scir\hs\xz\g]\scir\cc  y x z\per
\end{equation*}

A completely analogous argument shows  that the inclusion
\begin{equation*}\tag{7}\labelp{Eq:clawb6}\r\xz\g\seq\r\xy\a\otimes\r\yz\b
\end{equation*}
is equivalent to the  inclusion
\begin{equation*}\tag{8}\labelp{Eq:clawb7}
\hs\xz\g\seq\vphi\xy\mo[\ks\xy\a\scir\hs\yz\b]\scir\cc x y z\per
\end{equation*}

We now  transform  \refEq{clawb5} in a series of steps. Multiply
each side of \refEq{clawb5}  on the left by the coset $\ks\xy\a$
to obtain the equivalent  inclusion
\begin{equation*}\tag{9}\labelp{Eq:clawb8}
\ks\xy\a\scir\hs\yz\b\seq\ks\xy\a\scir\vphi\xy[\hs\xy\a\mo\scir\hs\xz\g]\scir\cc
y x z\per
 \end{equation*} Notice that the
right side  of \refEq{clawb8} is  a coset  of  $\kh$\per (For
example, $\cc y x z$ is a coset of $\h\yx\scir\h\yz$, which  is
equal to $\k\xy\scir\h\yz$\per  Also, $\hs\xy\a\mo\scir\hs\xz\g$ is
a coset of $\hh$\comma and $\hvphs\xy$ maps cosets of $\hh$ to
cosets of $\kh$\comma so $\vphi\xy[\hs\xy\a\mo\scir\hs\xz\g]$ is  a
coset of $\k\xy\scir\h\yz$. Finally, the product of two cosets of
$\kh$ with the coset $\ks\xy\a$ of $\k\xy$ is again a coset of
$\kh$.) The left side of \refEq{clawb8} is also a coset of $\kh$.
Since two cosets of the same group are either equal or disjoint,
the inclusion in \refEq{clawb8} is  equivalent to the equality
\begin{equation*}\tag{10}\labelp{Eq:clawb9}
\ks\xy\a\scir\hs\yz\b=\ks\xy\a\scir
\vphi\xy[\hs\xy\a\mo\scir\hs\xz\g]\scir\cc  y x z\per
 \end{equation*}
Observe that
\begin{align*} \ks\xy\a\scir \vphi\xy[\hs\xy\a\mo\scir\hs\xz\g]&=
\vphi\xy[\hs\xy\a]\scir \vphi\xy[\hs\xy\a\mo\scir\hs\xz\g]\\ &=
\vphi\xy[\hs\xy\a \scir \hs\xy\a\mo\scir\hs\xz\g]\\
&=\vphi\xy[\h\xy\scir\hs\xz\g]\comma\end{align*} by the definition
of  $\ks\xy\a$ (which implies that $\vphi \xy[\hs\xy\a]=\ks\xy
\a$)\comma the isomorphism properties of $\vphi\xy$\comma and the
 laws of group theory. Equation \refEq{clawb9} can
therefore be rewritten in the form
\begin{equation*}\tag{11}\labelp{Eq:clawb10}
\ks\xy\a\scir\hs\yz\b=\vphi\xy[\h\xy\scir\hs\xz\g]\scir\cc  y x
z\per\end{equation*} Apply $\vphi\xy\mo$ to  both sides  of
\refEq{clawb10}, and use the isomorphism properties of $\vphi
\xy\mo$, to obtain
\begin{align*}\tag{12}\labelp{Eq:clawb20}
\vphi\xy\mo[\ks\xy\a\scir\hs\yz\b]&=\vphi\xy\mo[\vphi\xy[\h\xy\scir\hs\xz\g]\scir\cc
y x z]\\
&=\vphi\xy\mo[\vphi\xy[\h\xy\scir\hs\xz\g]\myspace]\scir\vphi\xy\mo[\cc
y x z]\\ &=\h\xy\scir\hs\xz\g\scir\vphi\xy\mo[\cc  y x z]\per
\end{align*}
Now $\cc y x z$ is a coset of $\h\yx\scir\h\yz$, which, in turn,
is equal to $\k\xy\scir\h\yz$\comma and   $\vphi\xy $ maps the
group $\G\wx/(\h\xy\scir\h\xz)$ isomorphically to the group
$\G\wy/(\k\xy\scir\h\yz)$, so the inverse image $\vphi\xy\mo[\cc y
x z] $ must be a coset of $\h\xy\scir\h\xz$\per  Consequently,
\[\h\xy\scir\vphi\xy\mo[\cc y x z]=\vphi\xy\mo[\cc y x z]\comma
\]
so that \refEq{clawb20} reduces to
\begin{equation*}\tag{13}\labelp{Eq:clawb11}
\vphi\xy\mo[\ks\xy\a\scir\hs\yz\b]= \hs\xz\g\scir\vphi\xy\mo[\cc y
x z]\per\end{equation*} Summarizing,  inclusion \refEq{clawb5},
and hence also inclusion \refEq{clawb4}, is equivalent to equation
\refEq{clawb11}.

We now subject equation \refEq{clawb7} to similar, but simpler,
transformations.  Multiply each side of \refEq{clawb7} on the
right by $\cc x y z\mo$, and use the laws of group theory, to
obtain
\begin{equation*}\tag{14}\labelp{Eq:clawb12}
\hs\xz\g\scir\cc x y z\mo\seq\vphi\xy\mo[\ks\xy\a\scir\hs\yz\b]\per
\end{equation*}
Each side of this  inclusion is a coset of $\hh$\per Since two
 cosets of the same group are equal or disjoint, the inclusion in
\refEq{clawb12} is equivalent to the equation
\begin{equation*}\tag{15}\labelp{Eq:clawb13}
\hs\xz\g\scir\cc x y z\mo=\vphi\xy\mo[\ks\xy\a\scir\hs\yz\b]\per
\end{equation*}
Therefore,   inclusion \refEq{clawb7}, and hence also inclusion
\refEq{clawb6}, is equivalent to  equation \refEq{clawb13}.

Combine the results of the last two paragraphs to arrive at the
following conclusion: inclusion \refEq{clawb4} implies inclusion
\refEq{clawb6} just in case equation \refEq{clawb11} implies
equation \refEq{clawb13}. Compare \refEq{clawb11} with
\refEq{clawb13}: the former implies the latter just in case
\[\hs\xz\g\scir\vphi\xy\mo[\cc  y x
z]=\hs\xz\g\scir\cc x y z\mo\comma
\]
or, equivalently, just in case
\begin{equation*}\tag{16}\labelp{Eq:clawb115}
\vphi\xy\mo[\cc  y x z]=\cc x y z\mo\per\end{equation*}  Form the coset
inverse of both sides  of \refEq{clawb115}, and  apply the
isomorphism properties of $\vphi\xy\mo$, to rewrite
\refEq{clawb115} as
\begin{equation*}\tag{17}\labelp{Eq:clawb116}
\vphi\xy\mo[\cc y x z\mo]=\cc  x y z\per\end{equation*} Apply
$\vphi\xy$ to both sides of \refEq{clawb116} to arrive at the
equivalent equation
\begin{equation*}\tag{18}\labelp{Eq:clawb14}
\vphi\xy[\cc x y z]=\cc  y x z\mo\per \end{equation*}

It has been shown that the implication from \refEq{clawb4} to
\refEq{clawb6} for fixed $\a$, $\b$, and $\g$,  is equivalent  to
\refEq{clawb14}. This means that conditions (i) and (iii) in the
statement of the theorem are equivalent.  Since the formulation of
(iii) does not involve any of the three indices $\a$, $\b$, and
$\g$, it follows that (iii) implies (i) for each such triple of
indices, and hence  (iii) implies (ii). The implication from (ii)
to (i) is immediate.

The cycle law holds in $\cra CF$ just in case it holds for all
atoms. Consider such  a triple of  atoms \[\r\xy\a\comma\qquad
\r\wwz\b\comma\qquad \r\uv\g\comma\] we want to show
\[\r\wwz\b\subseteq\r\xy\a\mo\otimes\r\uv\g\qquad\text{implies}\qquad\r\uv\g\subseteq\r\xy\a\otimes\r\wwz\b\per\]
If $\wy=\ww$ and $u=\wx$ and $v=\wz$\co then the atomic form of the
cycle law holds for the triple  just in case $\vphi\xy[\cc x y
z]=\cc y x z\mo$\comma by the equivalence of conditions (ii) and
(iii) in the first part of the theorem.

Assume $\wy\neq\ww$ or $u\neq\wx$ or $v\neq \wz$. We show that the
law holds trivially, because the left side of the implication
reduces to the empty relation. Choose $\x<\kai\xy$ such that
\begin{align*}
\hs\xy\a\mo&=\hs\xy\x\comma\\ \intertext{and observe that}
\r\xy\a\mo&=\r\yx\x\comma\tag{19}\labelp{Eq:clawa1}
\end{align*}
by Converse \refT{convthm1}.  Consequently,
\[\r\xy\a\mo\otimes\r\uv\g=\r\yx\x\otimes \r\uv\g\seq\G\wy\times\G v\comma
\]
by \refEq{clawa1}, the definition of $\,\otimes\,$, and Partition
\refL{i-vi}. On the other hand, the relation $\r\wwz\b$ is included
in $\G\ww\times\G\wz$\comma by Partition \refL{i-vi}.
  The hypothesis that $\ww\neq\wy$ or $\wz\neq v$ implies that the two
Cartesian products \[\G\wy\times\G
v\qquad\text{and}\qquad\G\ww\times\G\wz\] are disjoint, since
distinct groups in the given group system are assumed to be
disjoint. It follows that
\[\r\wwz\b\cap (\r\xy\a\mo\otimes\r\uv\g)\seq (\G\ww\times\G\wz)\cap
(\G\wy\times\G v) =\varnothing\per
\]
Since $\r\wwz\b$ is non-empty, this argument shows that the
antecedent of the implication does not hold, so the entire
implication  must be true. If $u\neq \wx$, then
\[\r\xy\a\mo\otimes\r\uv\g=\r\yx\x\otimes \r\uv\g=\varnothing\comma
\]
by \refEq{clawa1} and the definition of $\,\otimes\,$, so again the
antecedent of the asserted implication is false, which means that
the entire implication is true.
\end{proof}

The next two characterization theorems make use of semi-frame
condition (iv).  We begin with an auxiliary lemma. Notice that (i)
of the lemma coincides with semi-frame condition (iv) stated for the
triple $(x,y,z)$ in $\ez 3$.

\begin{lm}\labelp{L:sfimage} Suppose that $\mc F$ is a
pre-semi-frame\comma and $\trip xyz$ a triple in $\ez 3$. The
following are equivalent\per
\begin{enumerate}
\item[(i)] If $Q$ is a union of cosets of the subgroup
$\h\xy\scir\h\xz$ in $\G\wx$\comma then
\[\vphi\yz[\vphi\xy[Q]\myspace]= \vphi\xz[\cc x y z\mo\scir Q\scir \cc x y z]\per
\]
\item[(ii)] If $Q$ is a union of
cosets of the subgroup $\k\xy\scir\h\yz$ in $\G\wy$\comma then
\[\vphi\xz\mo[\vphi\yz[Q]\myspace]=\cc x y
z\mo\scir\vphi\xy\mo[Q]\scir \cc x y z\per
\]
\item [(iii)]  If $Q$ is a union of
cosets of the subgroup $\k\xz\scir\k\yz$ in $\G\wz$\comma then
\[\cc x y z\scir \vphi\xz\mo[Q]=\vphi\xy\mo[\vphi\yz\mo[Q]\myspace]\scir
\cc x y z\per
\]
\end{enumerate}
\end{lm}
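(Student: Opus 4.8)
The plan is to prove the three-way equivalence by showing a cyclic chain of implications, exploiting the symmetry inherent in the three displayed formulas. Each of (i), (ii), (iii) asserts that a certain compatibility between the quotient isomorphisms $\vphi\xy$, $\vphi\yz$, $\vphi\xz$ and the inner automorphism induced by $\cc x y z$ holds; the three statements differ only in which of the three groups $\G\wx$, $\G\wy$, $\G\wz$ serves as the ``home'' of the union of cosets $Q$. Since $\vphi\xy$ maps (cosets of) $\h\xy\scir\h\xz$ in $\G\wx$ isomorphically onto (cosets of) $\k\xy\scir\h\yz$ in $\G\wy$ by semi-frame condition (iii) — and likewise $\vphi\yz$ carries $\k\xy\scir\h\yz$ to $\k\xz\scir\k\yz$ by the second half of that condition — applying one of these isomorphisms (or its inverse) to both sides of a given identity should transport it to the next. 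The inner automorphism $\tau_{xyz}(D)=\cc x y z\mo\scir D\scir\cc x y z$ will have to be tracked carefully as the identities are pushed around, using that $\vphi\xy$ sends $\cc x y z$ (more precisely, the coset $\cc x y z\scir(\h\xy\scir\h\xz)$) to a coset of $\k\xy\scir\h\yz$, and similarly for $\vphi\yz$.

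Concretely, I would first show (i)$\implies$(ii). Given a union $Q$ of cosets of $\k\xy\scir\h\yz$ in $\G\wy$, set $Q'=\vphi\xy\mo[Q]$, which is a union of cosets of $\h\xy\scir\h\xz$ in $\G\wx$ by semi-frame condition (iii). Apply (i) to $Q'$: this yields $\vphi\yz[\vphi\xy[Q']\myspace]=\vphi\xz[\cc x y z\mo\scir Q'\scir\cc x y z]$, i.e. $\vphi\yz[Q]=\vphi\xz[\cc x y z\mo\scir\vphi\xy\mo[Q]\scir\cc x y z]$. Now apply $\vphi\xz\mo$ to both sides and use its isomorphism properties to pull it inside on the right, obtaining exactly the identity in (ii). The implication (ii)$\implies$(iii) is entirely parallel: given $Q$ a union of cosets of $\k\xz\scir\k\yz$ in $\G\wz$, set $Q'=\vphi\yz\mo[Q]$, apply (ii) to $Q'$, and then apply $\vphi\xy$ to both sides, moving it inside on the left. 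The closing implication (iii)$\implies$(i) is the same maneuver once more: feed $\vphi\xz\mo[Q]$ into (iii) and apply $\vphi\yz$. In each case the only genuine computation is rearranging $\vphi$'s past the coset-multiplication by $\cc x y z^{\pm1}$, which is legitimate because these maps are group-quotient isomorphisms and coset multiplication is preserved; one must only check at each step that the coset being moved lives in the subgroup the isomorphism is defined on, which is guaranteed by semi-frame condition (iii) and the defining property of $\cc x y z$ as a coset of $\hh$.

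The main obstacle I anticipate is bookkeeping rather than conceptual: making sure that when $\vphi\xy$ or $\vphi\yz$ is applied to a product of the form $\cc x y z\mo\scir Q'\scir\cc x y z$, the image of $\cc x y z$ under the isomorphism is the \emph{correct} coset so that the resulting expression matches the target formula verbatim. In particular, (i) is stated with $\vphi\xz$ applied to $\tau_{xyz}(Q)$ on the right, whereas after transporting we get an inner automorphism by the \emph{image} of $\cc x y z$ in the new group; one has to verify that $\vphi\xy$ carries the inner automorphism $\tau_{xyz}$ of $\G\wx/(\h\xy\scir\h\xz)$ to conjugation by $\vphi\xy[\cc x y z]$ in $\G\wy/(\k\xy\scir\h\yz)$, and then check that this is consistent across all three formulas — but this is immediate from the fact that any group isomorphism intertwines inner automorphisms: $\psi(g^{-1}Dg)=\psi(g)^{-1}\psi(D)\psi(g)$. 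I would also remark that the equivalence is really a statement that semi-frame condition (iv) can be ``read off'' from any one of the three vertices of the triple $\trip x y z$, which is why the lemma is useful for the subsequent characterizations of the associative law and the second involution law. Once the cyclic chain is closed, all three conditions are equivalent, completing the proof.
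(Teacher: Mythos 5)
Your proposal is correct and follows essentially the same route as the paper's proof: the cyclic chain (i)$\Rightarrow$(ii)$\Rightarrow$(iii)$\Rightarrow$(i), obtained by substituting the appropriately transported set (legitimate by semi-frame condition (iii)) and then using the inverse properties of the quotient isomorphisms together with coset manipulation, and your worry about intertwining inner automorphisms never actually arises since in all three identities the multiplication by $C_{xyz}^{\pm 1}$ takes place in $G_x$. Two details need repair when written out: the finishing move from (ii) to (iii) is left-multiplication by $C_{xyz}$ (not an application of $\varphi_{xy}$, which would land you in $G_y$ while (iii) is an equation in $G_x$), and in (iii)$\Rightarrow$(i) the set to substitute is $\varphi_{xz}[C_{xyz}^{-1}\circ Q\circ C_{xyz}]$ (or $\varphi_{xz}[Q]$ -- note $Q$ lies in $G_x$, so $\varphi_{xz}^{-1}[Q]$ is not meaningful), after which one cancels $C_{xyz}$ and applies $\varphi_{xy}$ and then $\varphi_{yz}$, exactly as in the paper.
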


\begin{proof} Assume (i). To prove (ii), let $Q$ be a union of
cosets of $\k\xy\scir\h\yz$. By semi-frame condition (iii), which
holds by the assumption that $\mc F$ is a pre-semi-frame, we have
that $\vphi\xy\mo[Q]$ is a union of cosets of $\h\xy\scir\h\xz$.
Substitute $\vphi\xy\mo[Q]$ in place of $Q$ in (i) to get
\begin{equation*}\tag{1}\labelp{Eq:auxlem1}
\vphi\yz[\vphi\xy[\vphi\xy\mo[Q]]]\myspace=\vphi\xz[\cc x y
z\mo\scir \vphi\xy\mo[Q]\scir \cc x y z]\per
\end{equation*}
On both sides of \refEq{auxlem1} there is a union of cosets of
$\k\xz\scir\k\yz$, again by semi-frame condition (iii).
Apply $\vphi\xz\mo$ to both sides of (i) to obtain
\begin{equation*}\tag{2}\labelp{Eq:auxlem2}
\vphi\xz\mo[\vphi\yz[\vphi\xy[\vphi\xy\mo[Q]]]\myspace]=
\vphi\xz\mo[\vphi\xz[\cc x y z\mo\scir \vphi\xy\mo[Q]\scir \cc x y
z]]\per
\end{equation*}
Use the inverse property of functions to obtain (ii) from
\refEq{auxlem2}. (Notice that the symbol ${}\mo$ is being used two
different ways: to denote the inverse functions of the isomorphisms
$\vphi\xy$ and $\vphi\xz$, and to denote the group inverse of the
coset $\cc xyz$. The two different meanings  of this particular
symbol are standard, and should not cause the reader any confusion.)

In a similar way, to get (iii) from (ii), let $Q$ be a union of
cosets of $\k\xz\scir\k\yz$. Substitute $\vphi\yz\mo[Q]$ in place of
$Q$ in (ii), multiply both sides by $\cc xyz$ on the left, and use
the inverse property of functions to arrive at (iii).

To get (i) from (iii), let $Q$ be a union of cosets of
$\h\xy\scir\h\xz$. In (iii), substitute  $\vphi\xz[\cc xyz\mo\scir
Q\scir\cc xyz]$ in place of $Q$, and use the inverse property of
functions, to get
\begin{equation*}\tag{3}\labelp{Eq:auxlem3}
\cc xyz\scir \cc xyz\mo\scir Q\scir\cc xyz =
\vphi\xy\mo[\vphi\yz\mo[\vphi\xz[\cc xyz\mo\scir Q\scir\cc
xyz]]]\scir \cc x y z\per
\end{equation*}
Multiply both sides with $\cc xyz\mo$ on the right, and use the
inverse property for groups to get
\begin{equation*}\tag{4}\labelp{Eq:auxlem4}
Q = \vphi\xy\mo[\vphi\yz\mo[\vphi\xz[\cc xyz\mo\scir Q\scir\cc
xyz]]]\per
\end{equation*}
Finally, apply $\vphi\xy$ and then $\vphi\yz$ to both sides of
\refEq{auxlem4} and use the inverse property of functions to get (i)
from \refEq{auxlem4}.
\end{proof}

Turn next to the second involution law.  As before, it suffices to
characterize when the equation holds for pairs of atoms in $\cra C
F$.

\begin{theorem}[Second Involution Law Theorem]\labelp{T:invollawb} Let $\mc F$ be a semi-frame\comma and
$\trip x y z$  a triple  in $\ez 3$\per  The following conditions
are equivalent\per
\begin{enumerate}
\item[(i)]$(\r\xy \a\otimes\r\yz \b)\mo=\r\yz \b\mo\otimes\r\xy
\a\mo$ for some $\a<\kai\xy$ and some $\b<\kai\yz$\per
\item[(ii)]$(\r\xy \a\otimes\r\yz \b)\mo=\r\yz \b\mo\otimes\r\xy
\a\mo$ for all $\a<\kai\xy$ and all $\b<\kai\yz$\per
 \item[(iii)]$\vphi\xz[\cc x y z]=\cc z y x\mo$\per %, $\hvphs \xy\rp\hvphs \yz=\tau_{xyz}\rp \hvphs\xz$, and
 %$\hvphs \yx\rp\hvphs \xz=\hvphs\yz\rp \tau_{zyx}$\per
\end{enumerate} Consequently\comma the second involution law holds in the algebra $\cra C F$
just in case \textnormal{(iii)}
 holds for all triples $\trip x y z$   in $\ez 3$\per
\end{theorem}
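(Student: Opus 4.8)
The structure will mirror the proof of the Cycle Law Theorem. Since the second involution law is a positive, regular equation, it holds in $\cra CF$ iff it holds for all atoms, so it suffices to prove the equivalence of (i)--(iii) and then treat the degenerate index cases. Fix $\a<\kai\xy$ and $\b<\kai\yz$, and compute both sides of the equation in (i) down to coset conditions, using \refL{auxlemma} to translate equalities of unions of atomic relations into equalities of cosets. On the left, $\r\xy\a\otimes\r\yz\b = \tbigcup\{\r\xz\g:\hs\xz\g\seq\vphi\xy\mo[\ks\xy\a\scir\hs\yz\b]\scir\cc xyz\}$ by \refD{smult}; applying converse and using Converse \refT{convthm1} together with \refCo{con11} (to identify the coset enumerations for the reversed pairs), the left-hand side of (i) becomes a union $\tbigcup\{\r\zx\x:\hs\zx\x\seq M_0\}$ for an explicit coset $M_0$ of $\k\xz\scir\k\yz$ in $\G\wz$, namely the coset-inverse of $\vphi\xz[\vphi\xy\mo[\ks\xy\a\scir\hs\yz\b]\scir\cc xyz]$. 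On the right, $\r\yz\b\mo\otimes\r\xy\a\mo = \r\zy{\b'}\otimes\r\yx{\a'}$ for the appropriate $\a',\b'$, and by \refD{smult} this equals $\tbigcup\{\r\zx\x:\hs\zx\x\seq\vphi\zy\mo[\ks\zy{\b'}\scir\hs\yx{\a'}]\scir\cc zyx\}$; rewriting the quotient isomorphisms via semi-frame condition (ii) and the conventions on associated coset systems turns this into $\tbigcup\{\r\zx\x:\hs\zx\x\seq M_1\}$ for an explicit coset $M_1$ of $\k\xz\scir\k\yz$. By \refL{auxlemma}, (i) for the fixed indices is then equivalent to the coset equality $M_0 = M_1$.

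The heart of the argument is simplifying the equation $M_0 = M_1$ to condition (iii). This is where semi-frame condition (iv) enters, in the form of \refL{sfimage}: part (i) of that lemma, applied to a union $Q$ of cosets of $\h\xy\scir\h\xz$, says $\vphi\yz[\vphi\xy[Q]\myspace] = \vphi\xz[\cc xyz\mo\scir Q\scir\cc xyz]$, which is exactly the tool needed to move the composite isomorphism $\vphi\yz\scir\vphi\xy$ past a coset multiplication by $\cc xyz$ and rewrite it using $\vphi\xz$. The plan is to use this (or its equivalent forms (ii), (iii) of \refL{sfimage}) to peel off the factor $\vphi\xy\mo[\ks\xy\a\scir\hs\yz\b]$, which is a coset of $\h\xy\scir\h\xz$, from both $M_0$ and $M_1$. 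After applying $\vphi\xz\mo$, using the isomorphism properties and the group-theoretic laws for cosets (two cosets of the same subgroup are equal or disjoint; a coset times its own subgroup is itself), the factors involving $\a$ and $\b$ should cancel from both sides, leaving a pure relation between the shifting cosets $\cc xyz$, $\cc zyx$. Tracking the inverses carefully — $\cc zyx$ is a coset of $\h\zy\scir\h\zx = \k\yz\scir\k\xz$, and $\vphi\xz$ carries cosets of $\h\xy\scir\h\xz$ to cosets of $\k\xy\scir\k\xz$ — one arrives, after forming coset inverses and applying $\vphi\xz$ once, at $\vphi\xz[\cc xyz] = \cc zyx\mo$, which is (iii). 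Since (iii) makes no reference to $\a$ or $\b$, it follows at once that (iii) implies (ii), and (ii) trivially implies (i), completing the cycle of equivalences for these index cases.

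Finally, the degenerate cases: if the ``middle" indices do not match up — that is, if in the two products $\r\xy\a\otimes\r\yz\b$ and $\r\zy{\b'}\otimes\r\yx{\a'}$ one encounters a product of the form $\r_{pq}\otimes\r_{rs}$ with $q\ne r$ — then by \refD{smult} that product is $\varnothing$, and so is its converse, so the equation in (i) reads $\varnothing = \varnothing$ and holds trivially. This handles any pair of atoms other than those of the form $\r\xy\a$, $\r\yz\b$ (up to the index conventions already in force). Combining, the second involution law holds in $\cra CF$ iff (iii) holds for every triple $\trip xyz$ in $\ez 3$.

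I expect the main obstacle to be purely bookkeeping: correctly tracking the interplay of \emph{three} different notions of inverse (group-coset inverse, function inverse of an isomorphism, and the converse of a relation), together with the index translations forced by Converse \refT{convthm1} and \refCo{con11} when passing from $\pair xy$ to $\pair yx$ and from $\pair yz$ to $\pair zy$, and verifying that the right instance of \refL{sfimage} is being invoked so that the $\cc xyz$-conjugation appears on the correct side. There is no conceptual difficulty beyond what already appears in the Cycle Law Theorem; the computation is just one layer longer because the second involution law, unlike the cycle law, genuinely mixes the two factors $\r\xy\a$ and $\r\yz\b$ through the relative product before taking the converse.
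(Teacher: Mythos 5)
Your proposal is correct and follows essentially the same route as the paper's proof: reduce to atoms, compute both sides via \refD{smult} and Converse \refT{convthm1}, translate the equality of unions into a coset equality by \refL{auxlemma}, and use \refL{sfimage} (semi-frame condition (iv)) plus cancellation in the quotient group to strip off the $\a,\b$-dependent coset and land on $\vphi\xz[\cc xyz]=\cc zyx\mo$, with the mismatched-index cases collapsing to $\varnot=\varnot$. The only cosmetic difference is that you push the left-hand side forward into $\G\wz$ via $\vphi\xz$, whereas the paper pulls the right-hand side back into $\G\wx$ via $\vphi\zx$ (its application of \refL{sfimage}(ii)); the cancellation step and conclusion are identical.
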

\begin{proof}    Fix $\a<\kai\xy$ and
$\b<\kai\yz$\comma with the goal of showing that conditions (i) and
(iii) are equivalent. The first step is to work out concrete
formulas for the expressions on the left and right sides of
condition (i).  The definition of $\,\otimes\,$ gives
\begin{equation*}\tag{1}\labelp{Eq:inlawb1}
\r\xy\a\otimes\r\yz\b=\tbigcup\{\r\xz\gm:\hs\xz\gm\seq\vphi\xy\mo[\ks\xy\a\scir\hs\yz\b]\scir\cc
x y z\}\per
\end{equation*}
Form the relational converse of both sides of \refEq{inlawb1}, and
apply the distributivity of converse over arbitrary unions, to
obtain
\begin{equation*}
(\r\xy\a\otimes\r\yz\b)\mo=\tbigcup\{\r\xz\gm\mo:\hs\xz\gm\seq\vphi\xy\mo[\ks\xy\a\scir\hs\yz\b]\scir\cc
x y z\}\per\end{equation*} This last equation is equivalent to the
equation
\begin{equation*}\tag{2}\labelp{Eq:inlawb2}
(\r\xy\a\otimes\r\yz\b)\mo=\tbigcup\{\r\xz\gm\mo:\hs\xz\gm\mo
\seq(\vphi\xy\mo[\ks\xy\a\scir\hs\yz\b]\scir\cc x y
z)\mo\}\comma\end{equation*} by the first involution law for groups
(which says that $(g\mo)\mo=g$ for every element $g$ in a group).
Converse \refT{convthm1} asserts that
\[\r\xz\gm\mo=\r\zx\x\qquad\text{just in
case}\qquad\hs\xz\gm\mo=\hs\xz\x\per
\]
Substitute the right  side of each of these equations into the right
side of \refEq{inlawb2} to  arrive at
\begin{equation*}\tag{3}\labelp{Eq:inlawb3}
(\r\xy\a\otimes\r\yz\b)\mo=\tbigcup\{\r\zx\x:\hs\xz\x
\seq(\vphi\xy\mo[\ks\xy\a\scir\hs\yz\b]\scir\cc x y
z)\mo\}\per\end{equation*} Use the second involution law for groups
(which says that $(g\scir h)\mo=h\mo\scir g\mo$ for all elements $g$
and $h$ in a group)  and the isomorphism properties of $\vphi\xy$ to
see that
\begin{align*}(\vphi\xy\mo[\ks\xy\a\scir\hs\yz\b]\scir\cc
x y z)\mo&= \cc x y
z\mo\scir(\vphi\xy\mo[\ks\xy\a\scir\hs\yz\b])\mo\\ &= \cc x y
z\mo\scir\vphi\xy\mo[(\ks\xy\a\scir\hs\yz\b)\mo]\\ &= \cc x y
z\mo\scir\vphi\xy\mo[\hs\yz\b\mo\scir\ks\xy\a\mo]\per\end{align*}
  Replace the first term by the last term in the
right  side of \refEq{inlawb3} to conclude that
\begin{equation*}\tag{4}\labelp{Eq:inlawb4}
(\r\xy\a\otimes\r\yz\b)\mo=\tbigcup\{\r\zx\x:\hs\xz\x \seq\cc x y
z\mo\scir\vphi\xy\mo[\hs\yz\b\mo\scir\ks\xy\a\mo]\}\per
\end{equation*}

The next step is to work out an analogous expression for  the right
side of (i). Choose $\rho<\kai\xy$ and $\eta<\kai\yz$ so that
\begin{equation*}\tag{5}\labelp{Eq:inlawb5}
\ks\xy\a\mo=\ks\xy\rho\qquad\text{and}\qquad\hs\yz\b\mo=\hs\yz\eta\per
\end{equation*}
Apply semi-frame condition (ii) and Converse \refT{convthm1} to
obtain
\begin{equation*}\tag{6}\labelp{Eq:inlawb6}
\r\xy\a\mo=\r\yx\rho\qquad\text{and}\qquad
\r\yz\b\mo=\r\zy\eta\per\end{equation*}  Use \refEq{inlawb6} and the
definition of $\,\otimes\,$ to get
\begin{align*}\tag{7}\labelp{Eq:inlawb7.0}\r\yz\b\mo\otimes\r\xy\a\mo&=\r\zy\eta\otimes \r\yx\rho\\
&=\tbigcup\{\r\zx\gm:\hs\zx\gm\seq\vphi\zy\mo[\ks\zy\eta\scir\hs\yx\rho]\scir\cc
z y x\}\per
\end{align*}
\refCo{con11} and \refEq{inlawb5} yield
\begin{equation*}\tag{8}\label{Eq:inlawb8.0}
\ks\zy\eta=\hs\yz\eta=\hs\yz\b\mo\quad\text{and}\quad\hs\yx\rho
=\ks\xy\rho=\ks\xy\a\mo\per
\end{equation*}
Combine \refEq{inlawb7.0} and \refEq{inlawb8.0} to arrive at
\begin{equation*}\tag{9}\labelp{Eq:inlawb7}
\r\yz\b\mo\otimes\r\xy\a\mo=\tbigcup\{\r\zx\gm:\hs\zx\gm\seq
\vphi\zy\mo[\hs\yz\b\mo\scir\ks\xy\a\mo]\scir\cc z y x\}\per
\end{equation*}
Apply the isomorphism $\vphi\zx$ to both sides of the inclusion
\begin{align*} \hs\zx\gm&\seq
\vphi\zy\mo[\hs\yz\b\mo\scir\ks\xy\a\mo]\scir\cc z y
x\tag{10}\labelp{Eq:inlawb8}\\ \intertext{to obtain the equivalent
inclusion} \vphi\zx[\hs\zx\gm]&\seq
\vphi\zx[\vphi\zy\mo[\hs\yz\b\mo\scir\ks\xy\a\mo]\scir\cc z y
x]\per\tag{11}\labelp{Eq:inlawb9}
 \end{align*}
Use the definition of the coset $\ks\zx\gm$ as the image of the
coset $\hs\zx\gm$ under the isomorphism $\vphi\zx$, and then use
\refCo{con11}, to rewrite the  left side  of \refEq{inlawb9}   as
\begin{equation*} \vphi\zx[\hs\zx\gm]=\ks\zx\gm=\hs\xz\gm\per \tag{12}\labelp{Eq:inlawb12.0}
\end{equation*}
The right  side of \refEq{inlawb9} may also be rewritten in the
following way:
\begin{align*}
\vphi\zx[\vphi\zy\mo[\hs\yz\b\mo\scir\ks\xy\a\mo]\scir\cc
z y
x]&=\vphi\zx[\vphi\zy\mo[\hs\yz\b\mo\scir\ks\xy\a\mo]\myspace]\scir\vphi\zx[\cc
z y x]\tag{13}\labelp{Eq:inlawb13.0}\\
&=\vphi\xz\mo[\vphi\yz[\hs\yz\b\mo\scir\ks\xy\a\mo]\myspace]\scir\vphi\zx[\cc
z y x]\\ &=\cc x y
z\mo\scir\vphi\xy\mo[\hs\yz\b\mo\scir\ks\xy\a\mo]\scir\cc x y
z\scir\vphi\zx[\cc z y x]\per
\end{align*} The first equality uses the isomorphism property of
$\vphi\zx$\comma the second uses semi-frame condition %s (i) and
(ii) which says that
\[\vphi\zx=\vphi\xz\mo\qquad\text{and}\qquad\vphi\yz=\vphi\zy\mo\comma\]
and the third equality uses \refL{sfimage}(ii) (with
$\hs\yz\b\mo\scir\ks\xy\a\mo$ in place of $Q$). Combine
\refEq{inlawb12.0} with \refEq{inlawb13.0} to conclude that the
inclusion in \refEq{inlawb9},  and consequently also the one in
\refEq{inlawb8}, is equivalent to the inclusion
\begin{equation*}\tag{14}\labelp{Eq:inlawb10.0}
\hs\xz\gm\seq\cc x y
z\mo\scir\vphi\xy\mo[\hs\yz\b\mo\scir\ks\xy\a\mo]\scir\cc x y
z\scir\vphi\zx[\cc z y x]\per\end{equation*} Use the equivalence
between \refEq{inlawb8} and \refEq{inlawb10.0} to rewrite
\refEq{inlawb7}  as
\begin{multline*}\tag{15}\labelp{Eq:inlawb10}
\r\yz\b\mo\otimes\r\xy\a\mo=\\ \tbigcup\{\r\zx\gm:\hs\xz\gm\seq\cc x
y z\mo\scir\vphi\xy\mo[\hs\yz\b\mo\scir\ks\xy\a\mo]\scir\cc x y
z\scir\vphi\zx[\cc z y x]\}\per \end{multline*}

It follows from \refEq{inlawb4} and \refEq{inlawb10} that the
equation in (i) holds just in case the right  side of
\refEq{inlawb4} is equal to the right  side of \refEq{inlawb10}. The
right  sides of \refEq{inlawb4} and \refEq{inlawb10} are equal just
in case the cosets
\begin{gather*}\cc x y
z\mo\scir\vphi\xy\mo[\hs\yz\b\mo\scir\ks\xy\a\mo]\tag{16}\labelp{Eq:inlawb12}\\
\intertext{and} \cc x y
z\mo\scir\vphi\xy\mo[\hs\yz\b\mo\scir\ks\xy\a\mo]\scir\cc x y
z\scir\vphi\zx[\cc z y x]\tag{17}\labelp{Eq:inlawb13}
\end{gather*}
are equal, by \refL{auxlemma}. (Notice that \refEq{inlawb12} and \refEq{inlawb13}
really are cosets of $\hh$.  In more detail, each of  the factors in
\refEq{inlawb12} and \refEq{inlawb13} is a coset of $\hh$, so the
composition of these factors is also a coset of $\hh$\per For
example, $\hs\yz\b\mo\scir\ks\xy\a\mo$ is a coset of
$\k\xy\scir\h\yz$\comma and $\hvphs\xy$ maps the group $\G\wx/(\hh)$
isomorphically onto the group $\G\wy/(\kh)$, so the inverse image
$\vphi\xy\mo[\hs\yz\b\mo\scir\ks\xy\a\mo]$ must be a coset of
$\hh$\per  The isomorphism $\hvphs\zx$, which coincides with
$\hvphs\xz\mo$, maps the group $\G\wz/(\kk)$ isomorphically onto the
group $\G\wx/(\hh)$\comma and $\cc z y x$ is a coset of
$\h\zy\scir\h\zx=\kk$, so the image $\vphi\zx[\cc z y x]$ must be a
coset of $\hh$.)   The cosets in  \refEq{inlawb12} and
\refEq{inlawb13} are equal just in case
\begin{gather*} \hh=\cc x
y z\scir\vphi\zx[\cc z y x]\comma\\ \intertext{or, put another way,
they are equal just in case} \vphi\zx[\cc z y x]\mo=\cc x y
z\comma\tag{18}\labelp{Eq:inlawb14} \end{gather*} by the
cancellation law for groups. Rewrite \refEq{inlawb14} as
\begin{equation*}\tag{19}\labelp{Eq:inlawb19.0}
\vphi\zx[\cc z y x\mo]=\cc x y z\comma
\end{equation*} using the isomorphism properties of $\vphi\zx$\comma and then apply  the
inverse $\vphi\xz$ of the isomorphism $\vphi\zx$ to both sides of
\refEq{inlawb19.0} to obtain the equivalent equation
\begin{equation*}\tag{20}\labelp{Eq:inlawb15}
\cc z y x\mo=\vphi\xz[\cc x y z]\per \end{equation*}  Combine these
various equivalences to conclude that (i) holds if and only if
\refEq{inlawb15} holds, that is to say, if and only if (iii) holds.

It has been shown that (i) and (iii) are equivalent for any fixed
$\a$ and $\b$.  Since (iii) does not involve $\a$ and $\b$, it may
be concluded that (iii) implies (i) for any $\a$ and $\b$, and hence
(iii) implies (ii).  The implication from (ii) to (i) is trivial.

The form of the second involution law as a positive, regular
equation implies that it holds in $\cra CF$ just in case it holds
for all atoms $\r\xy\a$ and $\r\wwz\b$ in $\cra CF$\per  If
$\wy=\ww$, then the law holds for the given pair of atoms just in
case $\vphi\xz[\cc x y z]=\cc z y x\mo$\comma by the equivalence of
conditions (ii) and (iii) established above.

Assume $\wy\neq\ww$. We show that the second involution law holds
automatically for the given pair of atoms. Indeed, choose $\gm$ and
$\dlt$ so that
\begin{align*}
\hs\xy\a\mo=\hs\xy\gm\qquad&\text{and}\qquad
\hs\wwz\b\mo=\hs\wwz\dlt\per\\ \intertext{Semi-frame condition (ii)
and  Converse \refT{convthm1} imply that}
\r\xy\a\mo=\r\yx\gm\qquad&\text{and}\qquad \r\wwz\b\mo=\r\zw\dlt\per
\end{align*}
Combine this with the definition of $\,\otimes\,$ under the
assumption that $y\neq w$ to obtain
\begin{gather*}
\r\wwz\b\mo\otimes\r\xy\a\mo
=\r\zw\dlt\otimes\r\yz\gm=\varnothing\tag{21}\labelp{Eq:inlawa2}\\
\intertext{and} (\r\xy\a\otimes\r\wwz\b)\mo=\varnothing\mo
=\varnothing\per\tag{22}\labelp{Eq:inlawa3}\end{gather*} Since the
right sides of \refEq{inlawa2} and \refEq{inlawa3} are equal, so are
the left sides.
\end{proof}

Turn finally to the task of characterizing when the associative
law for relative multiplication holds in an algebra $\cra CF$.
Again, it suffices to characterize when it holds for atoms. It is
helpful to introduce a bit of notation.
  Let $\ez 4$ denote  the set of quadruples $\quadr x y z w$ such that the pairs
$\pair x y$, $\pair x z$, and $\pair x w$ are all in $\mc E$, or,
equivalently, such that the triples $\trip x y z$ and $\trip x z w$
are in $\ez 3$\per

\begin{theorem}[Associative Law Theorem]\labelp{T:alawc}  Let $\mc F$ be a semi-frame\comma  and
$\quadr x y z w$  a quadruple in $\ez 4$\per  The following
conditions are equivalent\per
\begin{enumerate}
 \item[(i)] $(\r\xy\a\otimes\r\yz\b)\otimes\r\zw\g=
\r\xy\a\otimes(\r\yz\b\otimes\r\zw\g) $ for  some
$\a<\kai\xy$\comma $\b<\kai\yz$ and $\g<\kai\zw$\per
 \item[(ii)]  $(\r\xy\a\otimes\r\yz\b)\otimes\r\zw\g=
\r\xy\a\otimes(\r\yz\b\otimes\r\zw\g) $ for  all
$\a<\kai\xy$\comma $\b<\kai\yz$ and $\g<\kai\zw$\per
 \item[(iii)] $\cc x y z\scir\cc x z w=\vphi\yx[\cc y z w\scir\h\yx]\scir\cc x y w$\per
 \end{enumerate}
Consequently\comma the associative law for $\,\otimes\,$ holds in
the algebra $\cra C F$ just in case \textnormal{(iii)} holds for all quadruples
$\quadr x y z w$ in $\ez 4$\per
\end{theorem}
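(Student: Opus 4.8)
The plan is to follow the template of the proofs of Cycle Law \refT{clawc} and Second Involution Law \refT{invollawb}. Fix indices $\a<\kai\xy$, $\b<\kai\yz$ and $\g<\kai\zw$, prove that condition (i) \emph{for this one triple of indices} is equivalent to condition (iii), and then note that, since (iii) refers to no indices, (iii) implies (i) for every admissible choice of $\a,\b,\g$, hence implies (ii), while (ii) trivially implies (i); this gives the equivalence of (i)--(iii). The ``consequently'' clause then follows by reducing the associative law in $\cra C F$ to its atomic form and disposing of the non-aligned triples of atoms just as at the close of the proof of \refT{clawc}.

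First I would write each side of the equation in (i) as a union of atomic relations $\r\xw\dlt$. By \refD{smult}, $\r\xy\a\otimes\r\yz\b=\tbigcup\{\r\xz\mu:\hs\xz\mu\seq P\}$ where $P=\vphi\xy\mo[\ks\xy\a\scir\hs\yz\b]\scir\cc x y z$ is a coset of $\h\xy\scir\h\xz$, and hence is the union of the $\h\xz$-cosets it contains; applying \refD{smult} again and distributing over this union collapses the resulting double union, giving $(\r\xy\a\otimes\r\yz\b)\otimes\r\zw\g=\tbigcup\{\r\xw\dlt:\hs\xw\dlt\seq C_L\}$ with $C_L=\vphi\xz\mo[\vphi\xz[P]\scir\hs\zw\g]\scir\cc x z w$. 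Symmetrically, with $Q=\vphi\yz\mo[\ks\yz\b\scir\hs\zw\g]\scir\cc y z w$, the right-hand side equals $\tbigcup\{\r\xw\dlt:\hs\xw\dlt\seq C_R\}$ where $C_R=\vphi\xy\mo[\ks\xy\a\scir Q]\scir\cc x y w$. Since $C_L$ and $C_R$ are unions of $\h\xw$-cosets, \refL{auxlemma} (applied to the triple $\trip x w w$ in $\ez 3$) reduces the equation in (i), for the given indices, to the coset identity $C_L=C_R$ in $\G\wx$.

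The substance of the argument is to strip $\a,\b,\g$ out of $C_L=C_R$. Put $S=\ks\xy\a\scir\vphi\yz\mo[\ks\yz\b\scir\hs\zw\g]$, a coset of $\k\xy\scir\h\yz\scir\h\yw$, so that $\ks\xy\a\scir Q=S\scir\cc y z w$ and hence $C_R=\vphi\xy\mo[S\scir\cc y z w]\scir\cc x y w$. Using the isomorphism property of the induced map $\hvphs\xy$ of Composition \refT{compthm} --- after enlarging $\cc y z w$ to the union of cosets of $\k\xy\scir\h\yw$ that it spans, namely $\cc y z w\scir\k\xy$ --- together with semi-frame condition (ii) ($\vphi\yx=\vphi\xy\mo$) and \refCo{con11} ($\h\yx=\k\xy$), one gets $C_R=\vphi\xy\mo[S]\scir\vphi\yx[\cc y z w\scir\h\yx]\scir\cc x y w$; this is exactly where the factor $\scir\h\yx$ in condition (iii) comes from. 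For $C_L$, I would invoke semi-frame condition (iv) for the triple $\trip x y z$ twice --- once as \refL{sfimage}(i), to see that $\vphi\xz[P]\scir\hs\zw\g=\vphi\xz[\cc x y z]\scir\vphi\yz[S]$, and once as \refL{sfimage}(ii), to see that $\vphi\xz\mo[\vphi\yz[S]]=\cc x y z\mo\scir\vphi\xy\mo[S]\scir\cc x y z$ --- whereupon the two occurrences of $\cc x y z$ cancel and $C_L=\vphi\xy\mo[S]\scir\cc x y z\scir\cc x z w$. Finally, $\vphi\xy\mo[S]$, $\cc x y z\scir\cc x z w$, and $\vphi\yx[\cc y z w\scir\h\yx]\scir\cc x y w$ are all cosets of the single normal subgroup $\h\xy\scir\h\xz\scir\h\xw$ of $\G\wx$ --- here semi-frame condition (iii) for $\trip x y z$ and for $\trip x y w$ is what identifies $\vphi\xy\mo[\k\xy\scir\h\yz\scir\h\yw]$ with $\h\xy\scir\h\xz\scir\h\xw$ --- so left cancellation of $\vphi\xy\mo[S]$ turns $C_L=C_R$ into $\cc x y z\scir\cc x z w=\vphi\yx[\cc y z w\scir\h\yx]\scir\cc x y w$, which is (iii).

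The main obstacle I anticipate is not conceptual but organizational: one must keep careful track, for every coset that occurs, of which normal subgroup of which group it is a coset of --- $\h\xy$, $\h\xz$, $\h\xw$, $\k\xy$, $\k\xy\scir\h\yz$, $\k\xy\scir\h\yw$, $\k\xy\scir\h\yz\scir\h\yw$, $\h\xy\scir\h\xz\scir\h\xw$, and so forth --- so that each application of multiplicativity of the induced isomorphisms, of \refL{sfimage}, and of coset cancellation is warranted, and so that semi-frame condition (iii) for each of the triples $\trip x y z$, $\trip x z w$, $\trip x y w$, $\trip y z w$ is used exactly where it is needed to make the relevant restrictions of the $\vphi$'s well defined. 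The remaining step, the reduction to atoms, is routine: the associative law holds in $\cra C F$ iff it holds for every triple of atoms, and for atoms $\r\xy\a$, $\r\pq\b$, $\r\uv\g$ that do not chain up (that is, $p\neq y$ or $u\neq q$) one checks, as in the closing paragraph of the proof of \refT{clawc}, that $\,\otimes\,$ returns the empty relation somewhere along each association, so both sides reduce to $\varnot$; the aligned case is precisely that of a quadruple $\quadr x y z w$ in $\ez 4$, which the equivalence of (ii) and (iii) already settles.
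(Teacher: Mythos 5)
Your proposal is correct and takes essentially the same route as the paper's proof: compute both associations as unions of atoms $\r\xw\rho$ governed by coset inclusions, reduce the equation in (i) via \refL{auxlemma} to an equality of cosets of the normal subgroup $\h\xy\scir\h\xz\scir\h\xw$, cancel the common factor $\vphi\xy\mo[S]$ to obtain exactly condition (iii), and dispose of the non-chaining triples of atoms by showing both associations are empty. The only deviation---invoking \refL{sfimage}(i) and (ii) on the left-hand side where the paper uses \refL{sfimage}(iii)---is immaterial, since those parts of the lemma are equivalent.
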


\begin{proof}
Fix some $\a<\kai\xy$\comma $\b<\kai\yz$\comma and $\g<\kai\zw$,
with goal of establishing the equivalence of (i) and (iii). The
first task is to compute and simplify an expression for
\begin{equation*}\tag{1}\labelp{Eq:alawb1}
(\r\xy\a\otimes\r\yz\b)\otimes\r\zw\g\per
\end{equation*}
The definition of $\,\otimes\,$ implies that
\begin{equation*}\tag{2}\labelp{Eq:alawb1.1}
 \r\xy\a\otimes\r\yz\b=
\tbigcup\{\r\xz\x:\hs\xz\x\seq\vphi\xy\mo[\ks\xy\a\scir\hs\yz\b]\scir\cc
x y z\}\per
 \end{equation*}
Form the product, in  the sense of $\,\otimes\,$, on both sides of
\refEq{alawb1.1} on the right with $\r\zw\g$\comma and use the
distributivity of $\,\otimes\,$ over arbitrary unions, to see that
\refEq{alawb1} is equal to the union
\begin{equation*}\tag{3}\labelp{Eq:alawb2}
\tbigcup\{\r\xz\x\otimes\r\zw\g:\hs\xz\x\seq\vphi\xy\mo[\ks\xy\a\scir\hs\yz\b]\scir\cc
x y z\}\per \end{equation*}  The definition  of $\,\otimes\,$ also
yields
\begin{equation*}\tag{4}\labelp{Eq:alawb3}
 \r\xz\x\otimes\r\zw\g=
\tbigcup\{\r\xw\rho:\hs\xw\rho\seq\vphi\xz\mo[\ks\xz\x\scir\hs\zw\g]\scir\cc
x z w\}
 \end{equation*}
for each $\x$. Write
\begin{equation*}\tag{5}\labelp{Eq:alawb4}
\dd= \vphi\xy\mo[\ks\xy\a\scir\hs\yz\b]\scir\cc x y
z\comma\end{equation*} and observe that $\dd$ is a coset of the
normal subgroup $\h\xy\scir\h\xz$ in $\G \wx$.  Combine
\refEq{alawb4} with \refEq{alawb2} and \refEq{alawb3} to arrive at
the equality of \refEq{alawb1} with
\begin{equation*}%\tag{5}\labelp{Eq:alawb5}
\tbigcup\{\r\xw\rho:\hs\xw\rho\seq\vphi\xz\mo[\ks\xz\x\scir\hs\zw\g]\scir\cc
x z w\text{ for some }\hs\xz\x\seq\dd\}\per
 \end{equation*}
This union may  be rewritten as
\begin{equation*}\tag{6}\labelp{Eq:alawb5}
\tbigcup\bigl\{\r\xw\rho:\hs\xw\rho\seq\tbigcup\{\vphi\xz\mo[\ks\xz\x\scir\hs\zw\g]\scir\cc
x z w:\hs\xz\x\seq\dd\}\bigr\}\per
\end{equation*} In more detail, the sets
\[\vphi\xz\mo[\ks\xz\x\scir\hs\zw\g]\scir\cc
x z w\comma
\] for various $\xi$, are cosets of $\h\xz\scir\h\xw$ (since $\vphi\xz$
induces an isomorphism from $\G\wx/(\h\xz\scir\h\xw)$ to
$\G\wz/(\k\xz\scir\h\zw)$)\comma and any coset $\hs\xw\rho$ of
$\h\xw$ that is contained in a union of cosets of
$\h\xz\scir\h\xw$  must be contained entirely within one of these
cosets. It follows that \refEq{alawb1} and \refEq{alawb5} are
equal.

We now transform   \refEq{alawb5} in a series of steps.  First,
\begin{align*}
\tbigcup\{\ks\xz\x:\hs\xz\x\seq\dd\}
&=\tbigcup\{\vphi\xz[\hs\xz\x]:\hs\xz\x\seq\dd\}
\tag{7}\labelp{Eq:alawb6}\\
&=\vphi\xz[\,\tbigcup\{\hs\xz\x:\hs\xz\x\seq\dd\}]\\
&=\vphi\xz[\dd]\comma
\end{align*} by the
definition  of $\ks\xz\x$ as the image of $\hs\xz\x$ under the
mapping $\vphi\xz$\comma the distributivity of function images
over unions, and the fact that $\dd$ is the union of the set of
cosets of $\h\xz$ that are included in it, by \refEq{alawb4} and
the remark following \refEq{alawb4}. Therefore
\begin{multline*}\tbigcup\{\vphi\xz\mo[\ks\xz\x\scir\hs\zw\g]\scir\cc
x z w:\hs\xz\x\seq\dd\}\\
\begin{split}
&=\tbigcup\{\vphi\xz\mo[\ks\xz\x\scir\hs\zw\g]:\hs\xz\x\seq\dd\}\scir\cc
x z w\\ &=\vphi\xz\mo[\,\tbigcup\{
\ks\xz\x\scir\hs\zw\g:\hs\xz\x\seq\dd\}]\scir\cc x z w\\
&=\vphi\xz\mo[\,\tbigcup\{
\ks\xz\x\scir\k\xz\scir\hs\zw\g:\hs\xz\x\seq\dd\}]\scir\cc x z w\\
&=\vphi\xz\mo[\,\tbigcup\{
\ks\xz\x:\hs\xz\x\seq\dd\}\scir\k\xz\scir\hs\zw\g]\scir\cc x z w\\
&=\vphi\xz\mo[\vphi\xz[\dd]\scir\k\xz\scir\hs\zw\g]\scir\cc x z
w\\
&=\vphi\xz\mo[\vphi\xz[\dd]\myspace]\scir\vphi\xz\mo[\k\xz\scir\hs\zw\g]\scir\cc
x z w\\ &=\dd\scir\vphi\xz\mo[\k\xz\scir\hs\zw\g]\scir\cc x z w\\
&= \vphi\xy\mo[\ks\xy\a\scir\hs\yz\b]\scir\cc x y
z\scir\vphi\xz\mo[\k\xz\scir\hs\zw\g]\scir\cc x z w\comma
\end{split}\end{multline*} by the distributivity
of coset composition over arbitrary unions, the distributivity of
inverse function images over arbitrary unions, the fact that
$\k\xz$ is the identity element for its group of cosets, the
distributivity of coset composition over arbitrary unions,
\refEq{alawb6}, the isomorphism property of $\vphi\xz\mo$\comma
the fact that $\vphi\xz$ and $\vphi\xz\mo$ are inverses of one
another (by semi-frame condition (ii)), and the definition of
$D_1$ in \refEq{alawb4}\per

Recall that $\cc x y z$ is a coset of $\hh$. The latter is the
identity element of the quotient group $\G\wx/(\h\xy\scir\h\xz)$,
and also the  image of $\kk$ under the inverse isomorphism
$\vphi\xz\mo$\per Consequently,
\begin{align*} \cc x y z\scir\vphi\xz\mo[\k\xz\scir\hs\zw\g]&=\cc x y z\scir
\hh\scir\vphi\xz\mo[\k\xz\scir\hs\zw\g]\\ &=\cc x y z\scir
\vphi\xz\mo[\kk]\scir\vphi\xz\mo[\k\xz\scir\hs\zw\g]\\ &=\cc x y
z\scir\vphi\xz\mo[\kk \scir \k\xz\scir\hs\zw\g]\\ &=\cc x y
z\scir\vphi\xz\mo[\kk \scir \hs\zw\g]\\ &=\cc x y
z\scir\vphi\xz\mo[\kk \scir\k\yz\scir \hs\zw\g]\\
&=\vphi\xy\mo[\,\vphi\yz\mo[\kk \scir\k\yz\scir
\hs\zw\g]\myspace]\scir\cc x y z\\
&=\vphi\xy\mo[\,\vphi\yz\mo[\kk]
\scir\vphi\yz\mo[\k\yz\scir\hs\zw\g]\myspace]\scir\cc x y z\\
&=\vphi\xy\mo[\kh\scir\vphi\yz\mo[\k\yz\scir\hs\zw\g]\myspace]\scir\cc
x y z\per
\end{align*} The sixth equality uses \refL{sfimage}(iii) (with $\kk
\scir\k\yz\scir \hs\zw\g $ in place of $Q$), the seventh the
isomorphism property of $\vphi\yz\mo$\comma and the eighth the
fact that $\vphi\yz$ maps $\kh$ to $\kk$\per

Combine the last two strings of equalities with the isomorphism
properties of $\vphi\xy\mo$, and the fact that $\k\xy\scir\h\yz$
is the identity element of the quotient group
$\G\wy/(\k\xy\scir\h\yz)$,  to arrive at
\begin{multline*}%\tag{7}\labelp{Eq:alawb7}
\tbigcup\{\vphi\xz\mo[\ks\xz\x\scir\hs\zw\g]\scir\cc x z
w:\hs\xz\x\seq\dd\}\\
\begin{split}
&= \vphi\xy\mo[\ks\xy\a\scir\hs\yz\b]\scir\cc x y
z\scir\vphi\xz\mo[\k\xz\scir\hs\zw\g]\scir\cc x z w\\
&=\vphi\xy\mo[\ks\xy\a\scir\hs\yz\b]\scir\vphi\xy\mo[\kh\scir\vphi\yz\mo[\k\yz\scir\hs\zw\g]\myspace]\scir\cc
x y z\scir\cc x z w\\ &=\vphi\xy\mo[\ks\xy\a\scir\hs\yz\b
\scir\kh\scir\vphi\yz\mo[\k\yz\scir\hs\zw\g]\myspace]\scir\cc x y
z\scir\cc x z w\\ &=\vphi\xy\mo[\ks\xy\a\scir\hs\yz\b
\scir\vphi\yz\mo[\k\yz\scir\hs\zw\g]\myspace]\scir\cc x y
z\scir\cc x z w\per\\
\end{split}\end{multline*}
Conclusion: \refEq{alawb5} may be rewritten as the inclusion
\begin{equation*}\tag{8}\labelp{Eq:alawb7}
\tbigcup\{\r\xw\rho:\hs\xw\rho\seq\vphi\xy\mo[\ks\xy\a\scir\hs\yz\b
\scir\vphi\yz\mo[\k\yz\scir\hs\zw\g]\myspace]\scir\cc x y
z\scir\cc x z w\}\comma
\end{equation*}
so \refEq{alawb1} and \refEq{alawb7} are equal.

The next task is to  work out  an analogous expression for
\begin{equation*}\tag{9}\labelp{Eq:alawb8}
\r\xy\a\otimes(\r\yz\b\otimes\r\zw\g)
\end{equation*}
in an analogous fashion\per  Write
\begin{equation*}\tag{10}\labelp{Eq:alawb9}
\ddd= \vphi\yz\mo[\ks\yz\b\scir\hs\zw\g]\scir\cc
 y z w\per \end{equation*}
The definition of $\,\otimes\,$ and \refEq{alawb9} imply that
\begin{equation*}\tag{11}\labelp{Eq:alawb10.1}
\r\yz\b\otimes\r\zw\g=\tbigcup\{\r\yw\x:\hs\yw\x\seq\ddd\}\per
\end{equation*} Form the $\otimes$ product, on both sides of this
equation on the left with $\r\xy\a$\comma and use the
distributivity of $\,\otimes\,$ over arbitrary unions, to see that
\refEq{alawb8} is equal to
\begin{equation*}\tag{12}\labelp{Eq:alawb12.1}
\tbigcup\{\r\xy\a\otimes\r\yw\x:\hs\yw\x\seq\ddd\}\per
\end{equation*} Since
\begin{equation*}%\tag{}\labelp{Eq:}
\r\xy\a\otimes\r\yw\x=\tbigcup\{\r\xw\rho:\hs\xw\rho\seq\vphi\xy\mo[\ks\xy\a\scir\hs\yw\x
]\scir\cc x y w\}\comma
\end{equation*} by the definition of $\,\otimes\,$,
it follows that \refEq{alawb12.1}, and hence also \refEq{alawb8},
is equal to
\begin{equation*}
\tbigcup\{\r\xw\rho:\hs\xw\rho\seq\vphi\xy\mo[\ks\xy\a\scir\hs\yw\x]\scir\cc
x y w\text{ for some }\hs\yw\x\seq\ddd\}\per
 \end{equation*}
This union can be rewritten as
\begin{equation*}%\tag{10}\labelp{Eq:alawb10}
\tbigcup\bigl\{\r\xw\rho:\hs\xw\rho\seq
\tbigcup\{\vphi\xy\mo[\ks\xy\a\scir\hs\yw\x]\scir\cc x y
w:\hs\yw\x\seq\ddd\}\bigr\}\comma\end{equation*} and therefore
also as
\begin{equation*}\tag{13}\labelp{Eq:alawb10}
\tbigcup \{\r\xw\rho:\hs\xw\rho\seq
\vphi\xy\mo[\ks\xy\a\scir\ddd]\scir\cc x y w\}\per\end{equation*}
(This last step uses the distributivity of coset compositions and
of inverse function images over arbitrary unions.)  Use the
identity element property for $\k\yz$ with respect to  its cosets,
the    isomorphism properties of $\vphi\yz\mo$ on cosets and
unions of cosets of $\k\yz$\comma and   the definition of
$\ks\yz\b$ to write
\begin{align*}
\vphi\yz\mo[\ks\yz\b\scir\hs\zw\g]&=
\vphi\yz\mo[\ks\yz\b\scir\k\yz\scir\hs\zw\g]\tag{14}\labelp{Eq:alawb11}\\
&=\vphi\yz\mo[\ks\yz\b]\scir\vphi\yz\mo[\k\yz\scir\hs\zw\g]\\
&=\hs\yz\b\scir\vphi\yz\mo[\k\yz\scir\hs\zw\g]\per
\end{align*}
 It  follows that
\begin{multline*}
\vphi\xy\mo[\ks\xy\a\scir\ddd]\scir\cc x y w\\
\begin{split}
&=\vphi\xy\mo[\ks\xy\a\scir\vphi\yz\mo[\ks\yz\b\scir\hs\zw\g]\scir\cc
 y z w]\scir\cc
x y w\\
&=\vphi\xy\mo[\ks\xy\a\scir\hs\yz\b\scir\vphi\yz\mo[\k\yz\scir\hs\zw\g]\scir\cc
 y z w]\scir\cc
x y w\\
&=\vphi\xy\mo[\ks\xy\a\scir\k\xy\scir\hs\yz\b\scir\vphi\yz\mo[\k\yz\scir\hs\zw\g]\scir\cc
 y z w]\scir\cc
x y w\\
&=\vphi\xy\mo[\ks\xy\a\scir\hs\yz\b\scir\vphi\yz\mo[\k\yz\scir\hs\zw\g]\scir\k\xy\scir\cc
 y z w]\scir\cc
x y w\\
&=\vphi\xy\mo[\ks\xy\a\scir\hs\yz\b\scir\vphi\yz\mo[\k\yz\scir\hs\zw\g]\myspace]\scir
\vphi\xy\mo[\k\xy\scir\cc
 y z w]\scir\cc
x y w\comma
\end{split}\end{multline*}
by \refEq{alawb9}, \refEq{alawb11}, the identity element
properties of $\k\xy$ with respect to its cosets, the fact that
$\k\xy$ is a normal subgroup of $\G y$ and therefore commutes with
the other sets, and the isomorphism properties of $\vphi\xy\mo$.
In this regard, observe that the complex product $\k\xy\scir\cc
 y z w$ is a union of cosets of $\k\xy$ (this was the point of introducing $\k\xy$ into  the
fourth expression), and of course so is
\[\ks\xy\a\scir\hs\yz\b\scir\vphi\yz\mo[\k\yz\scir\hs\zw\g]\]
(since the coset $\ks\xy\a$ is present in the complex product), so
the isomorphism property of $\vphi\xy\mo$  for unions of cosets of
$\k\xy$ really is applicable.

This last string  of equalities shows that \refEq{alawb10} may be
rewritten in the form
\begin{multline*}\tag{15}\labelp{Eq:alawb12}
\tbigcup \{\r\xw\rho:\hs\xw\rho\seq\\
\vphi\xy\mo[\ks\xy\a\scir\hs\yz\b\scir\vphi\yz\mo[\k\yz\scir\hs\zw\g]\myspace]\scir
\vphi\xy\mo[\k\xy\scir\cc
 y z w]\scir\cc
x y w\}\comma\end{multline*} so \refEq{alawb8} is equal to
\refEq{alawb12}.

It has been shown that \refEq{alawb1} is equal to  \refEq{alawb7},
and \refEq{alawb8} to  \refEq{alawb12}.  It follows that
\refEq{alawb1} and  \refEq{alawb8} will be equal, that is to say,
condition (i) of the theorem will hold,
 just in case \refEq{alawb7} and  \refEq{alawb12} are equal.
According to \refL{auxlemma}, the unions \refEq{alawb7} and
\refEq{alawb12} are equal just in case the corresponding cosets
\begin{gather*}\vphi\xy\mo[\ks\xy\a\scir\hs\yz\b
\scir\vphi\yz\mo[\k\yz\scir\hs\zw\g]\myspace]\scir\cc x y
z\scir\cc x z w \intertext{and}
\vphi\xy\mo[\ks\xy\a\scir\hs\yz\b\scir\vphi\yz\mo[\k\yz\scir\hs\zw\g]\myspace]\scir
\vphi\xy\mo[\k\xy\scir\cc
 y z w]\scir\cc x y w
\end{gather*}
are equal.  Apply the cancellation law  for the quotient group $\G
\wx/(\hh\scir\h\xw)$ to conclude that these two  cosets are equal
if and only if
\begin{equation*}\tag{16}\labelp{Eq:alawb14}
\cc x y z\scir\cc x z w=\vphi\xy\mo[\k\xy\scir\cc
 y z w]\scir\cc
x y w\per
\end{equation*}

To justify this application of the cancellation law, it must be
shown that the relevant factors, namely
\begin{equation*}\tag{17}\labelp{Eq:alawb15}
\vphi\xy\mo[\ks\xy\a\scir\hs\yz\b\scir
\vphi\yz\mo[\k\yz\scir\hs\zw\g]\myspace]\comma
\end{equation*}
\begin{equation*}\tag{18}\labelp{Eq:alawb16}
\cc x y z\scir\cc x z w\comma
\end{equation*}
and
\begin{equation*}\tag{19}\labelp{Eq:alawb17}
\vphi\xy\mo[\k\xy\scir\cc
 y z w]\scir\cc
x y w\comma
\end{equation*}
really are all cosets in $\G\wx$ of the normal subgroup
\begin{equation*}\tag{20}\labelp{Eq:alawb18}
\h\xy\scir\h\xz\scir\h\xw\per
\end{equation*}
Begin with \refEq{alawb15}.  Observe that $\k\yz\scir\hs\zw\g$ is
a coset  of $\k\yz\scir\h\zw$\comma so its  inverse image under
$\vphi\yz$ is a  coset  of $\h\yz\scir\h\yw$\per The complex
product $\ks\xy\a\scir\hs\yz\b $ is a coset of $\kh$\comma so the
product
\begin{equation*}\tag{21}\labelp{Eq:alawb21.1}
\ks\xy\a\scir\hs\yz\b\scir\vphi\yz\mo[\k\yz\scir\hs\zw\g]
\end{equation*}
is a coset of the group $\k\xy\scir\h\yz\scir\h\yz\scir\h\yw$,
which coincides with the group
\begin{equation*}\tag{22}\labelp{Eq:alawb21.2}
\k\xy\scir\h\yz\scir\h\yw\per
\end{equation*}
  Applying
$\vphi\xy\mo$ to \refEq{alawb21.1} gives \refEq{alawb15}. Applying
it to \refEq{alawb21.2} gives
\begin{equation*}
\vphi\xy\mo[\k\xy\scir\h\yz\scir\h\yw]\per
\end{equation*}
Since
\begin{align*}
 \vphi\xy\mo[\k\xy\scir\h\yz\scir\h\yw]&=
\vphi\xy\mo[\k\xy\scir\k\xy\scir\h\yz\scir\h\yw]\\
&=\vphi\xy\mo[\k\xy\scir\h\yz\scir\k\xy\scir\h\yw]\\
&=\vphi\xy\mo[\k\xy\scir\h\yz]\scir\vphi\xy\mo[\k\xy\scir\h\yw]\\
&=(\hh)\scir(\h\xy\scir\h\xw)\\
&=\h\xy\scir\h\xy\scir\h\xz\scir\h\xw\\
&=\h\xy\scir\h\xz\scir\h\xw\comma
\end{align*} and since \refEq{alawb21.1} is a coset of
\refEq{alawb21.2}, it may be concluded that \refEq{alawb15} is a
coset of \refEq{alawb18}, as claimed.

Turn now to \refEq{alawb16}.  By assumption, $\cc x y z$ is a
coset of the subgroup $\hh$\comma and $\cc x z w$ is a coset of
the subgroup $\h\xy\scir\h\xw$, so the product coset
\refEq{alawb16} is a coset of the product subgroup, which is
\refEq{alawb18}.

Consider, finally, \refEq{alawb17}.  By assumption, $\cc y z w$ is
a coset of $\h\yz\scir\h\yw$, so the product $\k\xy\scir\cc y z w$
is a coset of $\k\xy\scir\h\yz\scir\h\yw$\per  It follows that the
inverse image
\begin{equation*}\tag{23}\labelp{Eq:alawb19}
\vphi\xy\mo[\k\xy\scir\cc y z w]
\end{equation*}
 is a coset of the inverse image
$\vphi\xy\mo[\k\xy\scir\h\yz\scir\h\yw] $\per It was shown above
that this inverse image coincides with \refEq{alawb18}, so
\refEq{alawb19} is a coset of \refEq{alawb18}.   The set $\cc x w
y$ is a coset of $\h\xw\scir\h\xy$\comma by assumption, so the
product of $\cc x w y$ with \refEq{alawb19} is a coset of the
product of $\h\xw\scir\h\xy$ with  \refEq{alawb18}. This  last
product reduces to \refEq{alawb18}, so \refEq{alawb17} is a coset
of \refEq{alawb18}.

We carry out one final transformation of \refEq{alawb14}.
Semi-frame condition (ii) says that $\vphi\yx$ is the inverse of
$\vphi\xy$\comma and consequently   $\k\xy$ coincides with the
subgroup $\h\yx$\comma by \refCo{con11}. Also, the subgroup
$\h\yx$ is normal. Consequently, equation \refEq{alawb14}  may be
rewritten in the form
\begin{equation*}%\tag{24}\labelp{Eq:alawb21}
\cc x y z\scir\cc x z w=\vphi\yx[\cc y z w\scir\h\yx]\scir\cc x y
w\comma
\end{equation*} which is just the equation in condition (iii).

It has been demonstrated that condition (i) holds for the fixed
$\a$, $\b$, and $\g$ just in case the equation in condition (iii)
holds. Since the formulation of (iii) does not involve any of the
three given indices, it follows that (iii) implies (i) for each
such triple of indices, and therefore (iii) implies (ii).  The
implication from (ii) to (i) is obvious.

The associative law holds in $\cra CF$ just in case it holds for all
atoms.  Consider a triple of atoms \[\r\xy\a\comma\qquad
\r\wwz\b\comma\qquad \r\uv\g\per\]    If $\wy=\ww$ and  $\wz= u$,
then the law holds for the triple of atoms just in case
\[\cc x y z\scir\cc x z w=\vphi\yx[\cc y z w\scir\h\yx]\scir\cc x y w\comma\] by
the equivalence of conditions (ii) and (iii) in the first part of
the theorem.

If $\wy\neq\ww$ or if $\wz\neq u$, then the associative law holds
automatically for this triple, since both sides reduce to the empty
relation. Indeed, if $\wy\neq\ww$\comma then
\[\r\xy\a\otimes\r\wwz\b=\varnothing\comma\]
by the definition of $\,\otimes\,$, and consequently
\begin{align*}
(\r\xy\a\otimes\r\wwz\b)\otimes\r\uv\g&=\varnothing\comma\tag{24}\labelp{Eq:alawa1}\\
\intertext{again, by the definition of $\,\otimes\,$. If also
$\wz\neq u$, then a similar argument shows that}
\r\xy\a\otimes(\r\wwz\b\otimes\r\uv\g)&=\varnothing\per\tag{25}\labelp{Eq:alawa2}
 \end{align*}
In this case, associativity holds by \refEq{alawa1} and
\refEq{alawa2}.

If $\wz=u$, then the argument is  slightly  more involved.  In this
case,
\begin{equation*}
\r\wwz\b\otimes\r\uv\g
=\tbigcup\{\r\wwv\x:\hs\wwv\x\seq\vphi\wwz\mo[\ks\wwz\b\scir\hs\uv\g]\scir\cc
w z v\}\comma\tag{26}\labelp{Eq:alawa3}
\end{equation*}
by the definition of $\,\otimes\,$, and therefore
\begin{multline*}
\r\xy\a\otimes(\r\wwz\b\otimes\r\uv\g)=\\
\tbigcup\{\r\xy\a\otimes\r\wwv\x: \hs\wwv\x\seq
\vphi\wwz\mo[\ks\wwz\b\scir\hs\uv\g]\scir\cc w z v\}\comma
\end{multline*}
by \refEq{alawa3} and the  distributivity of the operation
$\,\otimes\,$ over arbitrary unions. Each of the relations
$\r\xy\a\otimes\r\wwv\x$ in this union is empty, by the definition
of $\,\otimes\,$, since we have assumed that $\wy\neq\ww$.  It
follows that \refEq{alawa2} holds in this case as well. Compare
\refEq{alawa2} with \refEq{alawa1} to arrive at the desired
conclusion for the case  $\wy\neq w$. The case  $\wz\neq\wu$ is
treated in an analogous fashion.
\end{proof}

The next corollary says that semi-frame condition (iv) is necessary
for $\cra CF$ to be a relation algebra.

\begin{cor}[Semi-frame Corollary]\labelp{C:semiframe}
Assume that $\mc F$ is a pre-semi-frame. If either the Second
Involution Law or the Associative Law holds in the algebra $\cra
CF$, then $\mc F$ is a semi-frame.
\end{cor}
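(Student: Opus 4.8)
The plan rests on one structural fact about \refT{invollawb} and \refT{alawc}.  By \refL{sfimage}, for a pre-semi-frame $\mc F$ and a triple $\trip xyz$ in $\ez 3$ the conditions (i), (ii) and (iii) of that lemma are equivalent, and its condition (i) is precisely semi-frame condition (iv) for $\trip xyz$.  So it suffices to fix a pre-semi-frame $\mc F$ in which the Second Involution Law (respectively, the Associative Law) holds, fix an arbitrary $\trip xyz$ in $\ez 3$, and derive \refL{sfimage}(ii) (respectively, \refL{sfimage}(iii)) for that triple; arbitrariness of $\trip xyz$ then yields that $\mc F$ is a semi-frame.  The key observation is that in the proof of \refT{invollawb} the semi-frame hypothesis is used exactly once, through \refL{sfimage}(ii), and in the proof of \refT{alawc} exactly once, through \refL{sfimage}(iii); every other manipulation there --- distributivity of coset multiplication and of (inverse) isomorphism images over unions, the identity-element and normality properties of the subgroups $\h\xy$ and $\k\xy$, and the Converse \refT{convthm1} together with \refCo{con11} --- is valid already for a pre-semi-frame.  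The strategy is therefore to rerun those computations but stop just short of that single \refL{sfimage}-substitution, and then invoke the hypothesis that the law holds for \emph{all} atoms (legitimate, since both laws are positive and regular).

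For the Second Involution Law, rerunning the proof of \refT{invollawb} through the passage to the $xz$-direction, but omitting the \refL{sfimage}(ii)-substitution, shows --- by \refL{auxlemma} --- that the equation $(\r\xy\a\otimes\r\yz\b)\mo=\r\yz\b\mo\otimes\r\xy\a\mo$ is equivalent to
\[\cc xyz\mo\scir\vphi\xy\mo[Q]=\vphi\xz\mo[\vphi\yz[Q]]\scir\vphi\zx[\cc zyx],\]
where $Q=\hs\yz\b\mo\scir\ks\xy\a\mo$ ranges over all cosets of the subgroup $\k\xy\scir\h\yz$ of $\G y$ as $\a$ and $\b$ vary.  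Since the law holds for every pair of atoms, this equation holds for every such $Q$.  Taking $\a=\b=0$ makes $Q$ the identity coset, and then, by semi-frame condition (iii), every $\vphi$-image on both sides collapses to an identity coset, so the equation reduces to $\cc xyz\mo=\vphi\zx[\cc zyx]$.  Feeding this back into the general equation turns it into $\vphi\xz\mo[\vphi\yz[Q]]=\cc xyz\mo\scir\vphi\xy\mo[Q]\scir\cc xyz$ for all $Q$, which is \refL{sfimage}(ii) for $\trip xyz$, hence semi-frame condition (iv).  As $\trip xyz$ was arbitrary, $\mc F$ is a semi-frame.

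For the Associative Law, given $\trip xyz$ apply the law to the quadruple $\quadr xyzz$, which lies in $\ez 4$ since $\pair zz\in\mc E$, and to the atoms $\r\xy 0$, $\r\yz 0$, $\r{zz}\g$.  Because $\vphi{zz}$ is the identity automorphism of $\G z/\{\e z\}$, the subgroup $\h{zz}$ is trivial, the cosets $\hs{zz}\g$ are singletons, and $\k\xz\scir\k\yz\scir\hs{zz}\g$ ranges over all cosets $Q$ of $\k\xz\scir\k\yz$ in $\G z$ as $\g$ varies.  Rerunning the proof of \refT{alawc} for this quadruple --- where throughout the relevant normal subgroup is $\hh$, so \refL{auxlemma} applies directly --- but omitting the \refL{sfimage}(iii)-substitution, shows that the associativity instance is equivalent to
\[\cc xyz\scir\vphi\xz\mo[Q]\scir\cc xzz=\vphi\xy\mo[\vphi\yz\mo[Q]]\scir\vphi\xy\mo[\k\xy\scir\cc yzz]\scir\cc xyz\]
for all cosets $Q$ of $\k\xz\scir\k\yz$.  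Specializing $Q$ to the identity coset gives $\cc xyz\scir\cc xzz=\vphi\xy\mo[\k\xy\scir\cc yzz]\scir\cc xyz$; feeding this back into the general equation and cancelling the common factor $\cc xzz$ on the right (permissible because $\h\xz$ is normal in $\G x$ and the remaining factors are unions of cosets of $\hh$) yields $\cc xyz\scir\vphi\xz\mo[Q]=\vphi\xy\mo[\vphi\yz\mo[Q]]\scir\cc xyz$, which is \refL{sfimage}(iii) for $\trip xyz$, hence semi-frame condition (iv).  Again, since $\trip xyz$ was arbitrary, $\mc F$ is a semi-frame.

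I expect the main obstacle to be bookkeeping rather than any new idea.  One must re-inspect the two rather long proofs line by line to confirm that semi-frame condition (iv) enters only through the single \refL{sfimage}-substitution in each; check that the correct instance of \refL{auxlemma} applies at every passage between an equality of unions of atoms and an equality of cosets (for the Associative Law the relevant normal subgroup is $\h\xy\scir\h\xz\scir\h\xw$, which for the quadruple $\quadr xyzz$ collapses to $\hh$); and keep track of the Converse \refT{convthm1} and \refCo{con11} identities used in re-indexing converses of atoms and in moving between the $xz$- and $zx$-directions.
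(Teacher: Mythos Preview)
Your proposal is correct and follows essentially the same approach as the paper: rerun the proofs of \refT{invollawb} and \refT{alawc} omitting the single \refL{sfimage} substitution, specialize to the identity coset to extract the constant relation between the shifting cosets, and feed that back to recover \refL{sfimage} and hence semi-frame condition (iv). The only difference is that for the Associative Law you explicitly specialize to the quadruple $\quadr xyzz$, whereas the paper works with a general fourth index $w$; your choice is slightly cleaner, since it guarantees directly that the parameter $Q$ ranges over all cosets of $\k\xz\scir\k\yz$ rather than over cosets of the possibly coarser subgroup $\k\xz\scir\k\yz\scir\h\zw$.
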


\begin{proof}
Assume that the Second Involution Law holds in $\cra CF$. Semi-frame
condition (iv) was used only once in the proof of \refT{invollawb},
when  \refL{sfimage}(ii) was applied to justify the third equality
in \refEq{inlawb13.0}. Omitting that step,  the proof shows that
\refT{invollawb}(i) holds just in case the cosets in
\refEq{inlawb12} and the modified \refEq{inlawb13} of that proof are
equal, that is to say, just in case
\begin{equation*}\tag{1}\labelp{Eq:semiframe1}
\cc xyz\mo\scir\vphi\xy\mo[Q] =
\vphi\xz\mo[\vphi\yz[Q]]\scir\vphi\zx[\cc zyx]\comma
\end{equation*}
where $Q$ is $\hs\yz\b\mo\scir\ks\xy\a\mo$. From the assumption that
the Second Involution Law holds, it follows that \refEq{semiframe1}
holds for all $\a,\b$, that is to say, for all cosets $Q$ of
$\h\yz\scir\k\xy$. Take $Q=\h\yz\scir\k\xy$ and use semi-frame
condition (iii) to obtain $\vphi\xy\mo[Q] =
\vphi\xz\mo[\vphi\yz[Q]]$. Substitute the left side of this equality
for the right side in \refEq{semiframe1}, and use the cancellation
law for groups (and the fact that for this choice of $Q$, the
inverse image $\vphi\xy\mo(Q)$ is a normal subgroup of $\G x$, and
hence commutes with $\vphi\zx(\cc zyx))$ to reduce
\refEq{semiframe1}   to
\begin{equation*}\tag{2}\labelp{Eq:semiframe2}
\cc xyz\mo = \vphi\zx[\cc zyx]\per%
\end{equation*}
Substitute the left side of \refEq{semiframe2} for the right side in
equation \refEq{semiframe1}, and then multiply both sides of the
resulting equation by $\cc xyz$ on the right to arrive at
\refL{sfimage}(ii), which is equivalent to \refL{sfimage}(i). Thus,
\refL{sfimage}(i)   holds for all triples $\trip xyz$ in $\ez 3$,
which is just what semi-frame condition (iv) expresses.

Assume now that the Associative Law holds in $\cra CF$. The
derivation of semi-frame condition (iv) is similar to the preceding
one. Semi-frame condition (iv) was used only once in the proof of
\refT{alawc}, when  \refL{sfimage}(iii) was applied to justify the
sixth equality in the transformation of the expression $\cc
xyz\scir\vphi\xz\mo[\k\xz\scir\hs\zw\g]$. If we use the
``half-transformed'' expression that we get without using
\refL{sfimage}(iii), in place of the one in step \refEq{alawb7} of
that proof, we get the   term
\begin{equation*}\tag{3}\labelp{Eq:semiframe3}
\vphi\xy\mo[\ks\xy\a\scir\hs\yz\b]\scir\cc
xyz\scir\vphi\xz\mo[\k\xz\scir\k\yz\scir\hs\zw\g]\scir\cc xzw\per
\end{equation*}
\refT{alawc}(i) is equivalent to the equality of \refEq{semiframe3}
and the term in \refEq{alawb12} of that proof, that is to say, to the term
\begin{equation*}\tag{4}\labelp{Eq:semiframe4}
\vphi\xy\mo[\ks\xy\a\scir\hs\yz\b\scir
\vphi\yz\mo[\k\yz\scir\hs\zw\g]]\scir\vphi\xy\mo[\k\xy\scir\cc
yzw]\scir\cc xyw\per
\end{equation*}
Multiply  the two terms on the left by
$\vphi\xy[\ks\xy\a\scir\hs\yz\b]$, use isomorphism property of
$\vphi\xy\mo$, and write $Q$ in place of $\k\yz\scir\hs\zw\g$ to get
that \refT{alawc}(i) is equivalent to the equation
\begin{equation*}\tag{5}\labelp{Eq:semiframe5}
\cc xyz\scir\vphi\xz\mo[\k\xz\scir Q]\scir\cc xzw =
\vphi\xy\mo[\vphi\yz\mo[Q]]\scir\vphi\xy\mo[\k\xy\scir\cc
yzw]\scir\cc xyw\per
\end{equation*}
The assumption that the associative law holds implies that \refEq{semiframe5} holds for all cosets $Q$
of $\k\yz\scir\h\zw$. In particular, it holds for $\k\yz\scir\h\zw$,
from which it follows that
\begin{equation*}\tag{6}\labelp{Eq:semiframe6}
\cc xyz\scir\cc xzw = \vphi\xy\mo[\k\xy\scir\cc yzw]\scir\cc xyw\per
\end{equation*}
Substitute the left side of \refEq{semiframe6} for the right side in
its occurrence on the right side of \refEq{semiframe5}, and then
cancel the occurrence of $\cc xzw$ on the right  of both sides of
the resulting equation, to get \refL{sfimage}(iii). The desired
conclusion now follows just as in the  previous paragraph.
\end{proof}

Coset relation algebras are generalizations of group relation
algebras, since each group relation algebra may be viewed as a coset
relation algebra.  In more detail, let $\mc F=\pair G\vph$ be a
group frame, and put $\bar{\mc F} = ( G, \vph, C)$ where $\cc x y z
= \h\xy\scir \h\xz$ for each triple $\trip xyz$ in $\ez 3$. It is
easy to see that the algebras $\f G[\mc F]$ and $\f C[\bar{\mc F}]$
are equal. In \refS{sec5}, it will be shown the class of coset
relation algebras is a proper extension of the class of group
relation algebras: there exist coset relation algebras that are not
group relation algebras.

We conclude the present section with two lemmas that concern the
relationship between these two constructions. The first lemma
characterizes when the operation $\,\otimes\,$ gives the same result
as relational composition.

\begin{lm}\labelp{L:otimes.1} Let  $\mc F$
be a semi-frame\per The following conditions are equivalent for all
triples    $\trip x y z$  in $\ez 3$\per
\begin{enumerate}\item[(i)] $\r\xy \lph\otimes\r\yz \bt=\r\xy \lph\rp\r\yz
\bt $ for  some $\lph<\kai\xy$ and some $\bt<\kai\yz$\per
\item[(ii)] $\r\xy \lph\otimes\r\yz \bt=\r\xy \lph\rp\r\yz
\bt $ for  all $\lph<\kai\xy$ and all $\bt<\kai\yz$\per
\item[(iii)] $\cc x y z =\hh$\per
\end{enumerate}
\end{lm}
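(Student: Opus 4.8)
The plan is to establish the cycle of implications (ii)$\,\Rightarrow\,$(i)$\,\Rightarrow\,$(iii)$\,\Rightarrow\,$(ii); as (ii)$\,\Rightarrow\,$(i) is trivial, all the content lies in the two remaining implications. The guiding observation is that, for a triple $\trip x y z$ in $\ez 3$, the $\,\otimes\,$-product $\r\xy\lph\otimes\r\yz\bt$ is given by the formula in \refD{smult}, whereas the relational composition $\r\xy\lph\rp\r\yz\bt$, \emph{as soon as it is known to belong to $A$}, is given by the formula in Composition \refT{compthm}(iii); these two formulas differ only in that the former carries an additional right-hand factor $\cc x y z$. Since each formula expresses its relation as the union of those atoms $\r\xz\gm$ whose index coset $\hs\xz\gm$ is included in a prescribed coset of $\hh$, \refL{auxlemma} reduces the equality of the two unions to the equality of the two prescribed cosets of $\hh$.

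For (iii)$\,\Rightarrow\,$(ii), assume $\cc x y z=\hh$. Then the inner automorphism $\tau_{xyz}$ of $\G\wx/(\hh)$ is the identity, so semi-frame condition (iv) reduces to $\hvphs\xy\rp\hvphs\yz=\hvphs\xz$. Combined with semi-frame condition (iii) --- which gives $\vphi\xy\mo[\k\xy\scir\h\yz]=\hh\supseteq\h\xz$ and, as remarked after \refD{semiframedef}, ensures that the induced maps $\hvphs\xy$, $\hvphs\yz$, $\hvphs\xz$ are well defined --- this is exactly condition (iv) of Composition \refT{compthm} for the pair of pairs $\pair x y$ and $\pair y z$. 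That theorem then yields $\r\xy\lph\rp\r\yz\bt=\tbigcup\{\r\xz\gm:\hs\xz\gm\seq\vphi\xy\mo[\ks\xy\lph\scir\hs\yz\bt]\}$ for all $\lph$ and $\bt$. On the other hand, $\vphi\xy\mo[\ks\xy\lph\scir\hs\yz\bt]$ is a coset of $\hh$, so multiplying it on the right by $\cc x y z=\hh$ leaves it unchanged, and therefore \refD{smult} produces the very same union for $\r\xy\lph\otimes\r\yz\bt$. This establishes (ii).

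For (i)$\,\Rightarrow\,$(iii), suppose $\r\xy\lph\otimes\r\yz\bt=\r\xy\lph\rp\r\yz\bt$ for some fixed $\lph$ and $\bt$. The left-hand side is, by its very definition, a union of atoms of $A$, so $\r\xy\lph\rp\r\yz\bt$ belongs to $A$. Since $\mc F$ is a semi-frame, semi-frame condition (i) and Identity \refT{identthm1} place $\id U$ in $A$, so \refC{compequiv} applies and promotes the membership just obtained to the statement that $\r\xy\lph\rp\r\yz\bt$ belongs to $A$ for every $\lph<\kai\xy$ and $\bt<\kai\yz$; Composition \refT{compthm} now yields its condition (iii), and in particular $\r\xy\lph\rp\r\yz\bt=\tbigcup\{\r\xz\gm:\hs\xz\gm\seq\vphi\xy\mo[\ks\xy\lph\scir\hs\yz\bt]\}$ for the fixed $\lph$ and $\bt$ at hand. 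Comparing this with the \refD{smult} formula for $\r\xy\lph\otimes\r\yz\bt$ and invoking \refL{auxlemma} (each side being of the form $\tbigcup\{\r\xz\gm:\hs\xz\gm\seq D\}$ with $D$ a coset, hence a union of cosets, of $\hh$), the hypothesised equality forces the coset identity $\vphi\xy\mo[\ks\xy\lph\scir\hs\yz\bt]\scir\cc x y z=\vphi\xy\mo[\ks\xy\lph\scir\hs\yz\bt]$. Multiplying both sides on the left by the inverse of the coset $\vphi\xy\mo[\ks\xy\lph\scir\hs\yz\bt]$ collapses this to $\hh\scir\cc x y z=\hh$, that is, $\cc x y z=\hh$, which is (iii).

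The step I expect to require the most care is (i)$\,\Rightarrow\,$(iii): the Composition Theorem and \refC{compequiv} were proved for a bare group pair, with no shifting cosets present, so one must notice that the hypothesis (i) itself is exactly what forces $\r\xy\lph\rp\r\yz\bt$ into $A$, after which \refC{compequiv} upgrades this from the single index pair in hand to all index pairs and the Composition Theorem takes over unchanged. A secondary matter, needed in both nontrivial implications, is the coset bookkeeping --- checking (via semi-frame condition (iii)) that $\vphi\xy\mo[\ks\xy\lph\scir\hs\yz\bt]$ really is a single coset of $\hh$, so that right multiplication by the coset $\cc x y z$ and the subsequent cancellation are legitimate.
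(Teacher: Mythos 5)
Your proposal is correct and follows essentially the same route as the paper's proof: (iii)$\Rightarrow$(ii) via the collapse of semi-frame condition (iv) to Composition Theorem condition (iv) and the observation that right multiplication by $\cc xyz=\hh$ is harmless, and (i)$\Rightarrow$(iii) by noting that the hypothesis puts $\r\xy\lph\rp\r\yz\bt$ into $A$, invoking \refC{compequiv} and the Composition Theorem, and then using \refL{auxlemma} together with cancellation in the quotient group. Your explicit check that $\id U\in A$ (so that \refC{compequiv} applies) and that $\vphi\xy\mo[\ks\xy\lph\scir\hs\yz\bt]$ is a single coset of $\hh$ matches the paper's own bookkeeping.
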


\begin{proof} Assume first that  condition (iii) holds, with the goal of
establishing (ii). Clearly,
\begin{equation*}\tag{1}\labelp{Eq:otimes2}
\vphi\xy\mo[\ks\xy\lph\scir\hs\yz\bt]\scir\cc x y
z=\vphi\xy\mo[\ks\xy\lph\scir\hs\yz\bt]\comma\end{equation*} because
$\hh$ is the identity element in its group of  cosets. For the same
reason, the inner automorphism $\tau$ of $\G\wx/(\hh)$ determined by
the coset $\cc x y z$ is  the identity automorphism. Semi-frame
condition (iv) therefore reduces to
\begin{equation*}\tag{2}\label{Eq:otimes2.1}
\hvphs\xy\rp\hvphs\yz=\tau\rp\hvphs\xz =\hvphs\xz\per
\end{equation*}
Use \refEq{otimes2.1} and the implication from (iv) to (iii) in the
Composition Theorem to obtain
\begin{equation*}\tag{3}\labelp{Eq:otimes1}\r \xy \a \rp \r
\yz\b=\tbigcup\{\r \xz \g: \hs \xz \g \seq \vphi \xy\mo[ \ks\xy
\a\scir\hs \yz \b]\}
\end{equation*}
for all $\lph<\kai\xy$ and   $\bt<\kai\yz$\per  (The first
hypothesis in condition (iv) is satisfied because of semi-frame
condition (iii).) Use \refD{smult} to get
\begin{equation*}\tag{4}\labelp{Eq:otimes0}\r \xy \a \otimes \r
\yz\b=\tbigcup\{\r \xz \g: \hs \xz \g \seq \vphi \xy\mo[ \ks\xy
\a\scir\hs \yz \b]\scir\cc x y z\} \end{equation*} for all
$\a<\kai\xy$ and all $\b<\kai\yz$\per
 Combine \refEq{otimes1}, \refEq{otimes0}, and
\refEq{otimes2} to arrive at (ii).

The implication from (ii) to (i) is obvious. To establish the
implication from (i) to (iii),  let $\a<\kai\xy$ and $\b<\kai\yz$ be
fixed indices such that (i) holds. Since the universe $A$ of the
algebra $\cra CF$ is closed under the operation $\,\otimes\,$, the
composition $\r \xy \a \rp \r \yz\b$ must belong to $A$. Apply
\refC{compequiv}   to see that this composition must belong to $A$
for every choice of $\a<\kai\xy$ and $\b<\kai\yz$. Invoke the
Composition Theorem to obtain \refEq{otimes1}. Use \refD{smult} to
get \refEq{otimes0}. Combine \refEq{otimes1} and \refEq{otimes0}
with the assumption in (i) to arrive at
\begin{multline*}  \tbigcup\{\r \xz \g:
\hs \xz \g \seq \vphi \xy\mo[ \ks\xy\a \scir\hs \yz\b ]\}\\
=\tbigcup\{\r \xz \g: \hs \xz \g \seq \vphi \xy\mo[ \ks\xy\a
\scir\hs \yz\b ]\scir\cc x y z\}\tag{5}\labelp{Eq:otimes4}
\end{multline*} for the  $\a $ and $\b$ chosen so  that (i) holds.
Apply \refL{auxlemma} to \refEq{otimes4} to obtain \refEq{otimes2}.
(To check  that \refL{auxlemma} really is applicable, observe that
the inverse image $\vphi\xy\mo[\ks\xy\lph\scir\hs\yz\bt]$ of the
coset $\ks\xy\lph\scir\hs\yz\bt$   of $\kh$   is a coset of $\hh$,
because $\hvphs\xy$ maps $\G\wx/(\hh)$ isomorphically to
$\G\wy/(\kh)$\per Also, $\cc xyz$ is a coset of $\hh$, by
assumption. Consequently, the composition
\[\vphi \xy\mo[ \ks\xy\a \scir\hs \yz\b ]\scir\cc x y z\] is a
coset of $\hh$.) The only element of a (quotient) group that leaves
another element of the group unchanged under group composition is
the identity element, by the cancellation law for groups.
Consequently, it follows from \refEq{otimes2} that $\cc x y z$ must
coincide with the identity element $\hh$ of the quotient group\per
\end{proof}

In general,  the composition $\r\xy \a\rp\r\yz \b$ does not belong
to the algebra $\cra CF$.  Fortunately, it is possible  to
characterize when it does belong.

\begin{lm}\labelp{L:compclosed} Let $\mc F$ be a
semi-frame\per The following conditions are equivalent for all
triples $\trip xyz$ in   $\ez 3$\per
\begin{enumerate}\item[(i)] $\r\xy \a\rp\r\yz \b$ is in
$\cra CF$ for some $\a<\kai\xy$ and some $\b<\kai\yz$\per
\item[(ii)]  $\r\xy \a\rp\r\yz \b$ is in
$\cra CF$ for all $\a<\kai\xy$ and all $\b<\kai\yz$\per
\item[(iii)] $\cc x y z$ is in the center of the  group
$\G\wx/(\hh)$\per
\end{enumerate}
\end{lm}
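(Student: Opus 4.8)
The plan is to reduce the statement to Composition \refT{compthm} together with the observation, made just before \refD{semiframedef}, that the inner automorphism $\tau_{xyz}$ of $\G x/(\hh)$ coincides with the identity automorphism precisely when the coset $\cc x y z$ lies in the center of $\G x/(\hh)$. Since $\mc F$ is a semi-frame, it is in particular a pre-semi-frame, so $\id U$ belongs to $A$ by Identity \refT{identthm1}, and \refC{compequiv} is applicable. The implication (ii)$\,\Rightarrow\,$(i) is trivial, so it remains to prove (i)$\,\Rightarrow\,$(iii) and (iii)$\,\Rightarrow\,$(ii).

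For (i)$\,\Rightarrow\,$(iii): assuming $\r\xy\a\rp\r\yz\b\in A$ for some $\a<\kai\xy$ and $\b<\kai\yz$, \refC{compequiv} yields $\r\xy 0\rp\r\yz 0\in A$, so Composition \refT{compthm}(iv) holds for $\trip xyz$. Its first clause, $\h\xz\seq\vphi\xy\mo[\k\xy\scir\h\yz]$, is in fact automatic: applying $\vphi\xy\mo$ to semi-frame condition (iii), namely $\vphi\xy[\h\xy\scir\h\xz]=\k\xy\scir\h\yz$, gives $\vphi\xy\mo[\k\xy\scir\h\yz]=\h\xy\scir\h\xz\supseteq\h\xz$. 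The content is therefore the second clause, $\hvphs\xy\rp\hvphs\yz=\hvphs\xz$. On the other hand, semi-frame condition (iv) asserts $\hvphs\xy\rp\hvphs\yz=\tau_{xyz}\rp\hvphs\xz$. Comparing these two identities of maps with domain $\G x/(\hh)$ gives $\tau_{xyz}\rp\hvphs\xz=\hvphs\xz$, and since $\hvphs\xz$ is an isomorphism, hence injective, it may be cancelled, so that $\tau_{xyz}$ is the identity automorphism of $\G x/(\hh)$. By the observation cited above, this says precisely that $\cc x y z$ lies in the center of $\G x/(\hh)$, which is (iii).

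For (iii)$\,\Rightarrow\,$(ii): if $\cc x y z$ is central in $\G x/(\hh)$, then $\tau_{xyz}$ is the identity automorphism, so semi-frame condition (iv) reduces to $\hvphs\xy\rp\hvphs\yz=\hvphs\xz$; combined with the first clause of Composition \refT{compthm}(iv), which (as noted above) is a consequence of semi-frame condition (iii), this shows that Composition \refT{compthm}(iv) holds for $\trip xyz$. Composition \refT{compthm} then gives $\r\xy\a\rp\r\yz\b\in A$ for all $\a<\kai\xy$ and all $\b<\kai\yz$, which is (ii). Putting the three implications together proves the equivalence.

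The argument is largely bookkeeping once Composition \refT{compthm} is invoked; the one point requiring a little care is to verify that $\hvphs\xy$, $\hvphs\yz$, and $\hvphs\xz$ have compatible domains and codomains, so that the displayed equations are meaningful and $\hvphs\xz$ really can be cancelled. This is exactly what semi-frame condition (iii) guarantees, since it forces $\vphi\xy\mo[\k\xy\scir\h\yz]=\hh$, whence $\hvphs\xy$ and $\hvphs\xz$ are both defined on $\G x/(\hh)$, which is also the domain and codomain of $\tau_{xyz}$.
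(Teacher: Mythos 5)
Your proposal is correct and follows essentially the same route as the paper: both rely on \refC{compequiv} (justified by semi-frame condition (i) and Identity \refT{identthm1}), the equivalence of closure under composition with condition (iv) of Composition \refT{compthm}, and a comparison with semi-frame condition (iv) followed by cancelling $\hvphs\xz$ to conclude that $\tau_{xyz}$ is the identity, i.e.\ that $\cc xyz$ is central. The only cosmetic difference is that you route the argument through the chain (ii)$\Rightarrow$(i)$\Rightarrow$(iii)$\Rightarrow$(ii) instead of the paper's (i)$\Leftrightarrow$(ii) plus (ii)$\Leftrightarrow$(iii), which changes nothing of substance.
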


\begin{proof}
The equivalence of (i) and (ii) is proved in \refC{compequiv}.  To
establish the implication from (iii) to (ii), assume that $\cc x y
z$ is in the center of $\G\wx/(\hh)$\per The inner automorphism
$\tau$ determined by $\cc x y z$ is then the identity automorphism,
so    semi-frame condition (iv) for the given triple $\trip x y z$
reduces to
\begin{equation*}\tag{1}\labelp{Eq:compclosed1}
\hvphs\xy\rp\hvphs\yz=\hvphs\xz\per
\end{equation*}
Keeping in mind semi-frame condition (iii), we see that the
conditions in part (iv) of the  Composition Theorem are satisfied
for the triple $\trip x y z$.  By the implication from (iv) to (ii)
in that theorem, the composition $\r\xy\a\rp\r\yz\b$ must be in the
universe $A$ of the algebra $\cra CF$ for  all $\a$ and $\b$.

 To establish  the implication from (ii) to (iii), assume that $\r\xy \a\rp\r\yz \b$ is
in $A$ for all $\a$ and   $\b$.  It follows from the Composition
Theorem that \refEq{compclosed1} holds. By assumption, $\mc F$ is a
semi-frame, so
\begin{equation*}\tag{2}\labelp{Eq:compclosed2}
\hvphs\xy\rp\hvphs\yz=\tau\rp\hvphs\xz\comma
\end{equation*}
with $\tau$ denoting $\tau_{xyz}$. Comparing \refEq{compclosed1} and
\refEq{compclosed2}, it is clear that
\[\hvphs\xz=\tau\rp\hvphs\xz\per
\]
Form the relational composition of  each side of this equation with
$\hvphs\xz\mo$ on the  right to  see that $\tau$ is the identity
automorphism of the quotient group $\G\wx/(\hh)$\per This can only
happen if $\cc x y z$ is in the center of the quotient group,
because $\tau $ is the inner automorphism determined by $\cc x y
z$\per
\end{proof}

\section{Coset Semi-frames}\labelp{S:sec4}

In the preceding section, necessary and sufficient conditions are
given for the algebra $\cra CF$ constructed from a coset
semi-frame $\mc F$ to satisfy the identity law,  the second
involution law, the cycle law, and the associative law, and hence
to be a relation algebra. We single out the coset semi-frames that
satisfy these conditions.

\begin{df}\labelp{D:costra}A coset semi-frame
\[\mc F=(\langle
\G x:x\in I\,\rangle\smbcomma \langle\vph_{xy}:\pair x y\in \mc
E\,\rangle\smbcomma \langle \cc x y z:\trip x y z\in\ez
3\rangle)\] is said to \textit{satisfy the coset conditions} if
the following equations hold for all pairs $\pair xy$ in $\mc E$\comma all
triples $\trip xyz$ in $\ez 3$\comma and all quadruples $\quadr
xyzw$ in $\ez 4$ respectively\per
\begin{enumerate}
\item[(i)]
$\cc x y y =\h\xy$\per
\item[(ii)]
$\vphi\xz[\cc x y z]=\cc z y x\mo$\per
\item[(iii)]
$\vphi\xy[\cc x y z]=\cc y x z\mo$\per
\item[(iv)]
$\cc x y z\scir\cc x z w=\vphi\yx[\cc y z w\scir\h\yx]\scir\cc x y
w$\per
\end{enumerate} These
are called the \textit{coset conditions} for the identity law, the
second involution law, the cycle law, and the associative law
respectively.
\end{df}

The results in the previous section lead to the following theorem,
which is one of the main results of this paper.

\begin{theorem}[Coset Semi-frame Theorem]\labelp{T:costhm}
If  a coset semi-frame  $\mc F$  satisfies the coset
conditions\comma then the algebra $\cra CF$ constructed from $\mc
F$ is a complete and atomic  measurable relation algebra with base
set and unit \[U=\tbigcup  \{\G x:x\in I\}\qquad\text{and}\qquad
E=\tbigcup\{\cs Gx\times\cs Gy:\pair xy\in\mc E\}\]
respectively\per The atoms in this algebra are the relations of
the form $\r\xy \a$ for pairs $\pair xy$ in $\mc E$\comma and the
subidentity atoms are the relations of the form $\r\xx 0$ for elements $x$
in $I$\per The measure of $\r\xx 0$ is just the cardinality of the
group $\G\wx$\per
\end{theorem}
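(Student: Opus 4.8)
The plan is to check, axiom by axiom, that $\cra CF$ is a relation algebra, then to read off completeness, atomicity, the base set and the unit from the Boolean Algebra \refT{disj}, and finally to verify measurability by a short direct computation on the diagonal blocks. The essential point is that the four coset conditions of \refD{costra} are, in the order listed, precisely the conditions isolated in the Identity Law, Second Involution Law, Cycle Law, and Associative Law Theorems of \refS{8}, so almost all of the algebra has already been done there.

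First I would note that a coset semi-frame satisfying the coset conditions is in particular a semi-frame, hence a pre-semi-frame (in any case, coset condition (ii), or (iv), already forces this via the Semi-frame \refC{semiframe}). Consequently $\cra CF=\langle A,\cup,\diff,\otimes,\mo,\id U\rangle$ is legitimately defined: $A$ contains $\id U$ by Identity \refT{identthm1} and semi-frame condition (i), is closed under converse by Converse \refT{convthm1} and semi-frame condition (ii), and is closed under $\,\otimes\,$ by \refD{smult}. As already observed in \refS{8}, several axioms hold with no further hypotheses: (R1)--(R3) because the Boolean reduct of $\cra CF$ is a complete, atomic Boolean set algebra; (R6) because $\mo$ is genuine relational converse; and (R8), (R9) because $\,\otimes\,$ and $\mo$ are completely distributive over unions.

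Next come the four substantive axioms. Each of (R4), (R5), (R7) is a positive, regular equation and (R11) is an implication between positive, regular equations of the form $\sigma=0$, so each holds in the atomic algebra $\cra CF$ just in case it holds on all atoms. By Identity Law \refT{identthm2}, (R5) holds on all atoms iff $\cc x y y=\h\xy$ for every $\pair xy$ in $\mc E$ --- coset condition (i); by Cycle Law \refT{clawc}, (R11) holds on all atoms iff $\vphi\xy[\cc x y z]=\cc y x z\mo$ for every $\trip xyz$ in $\ez 3$ --- coset condition (iii); since $\mc F$ is a semi-frame, Second Involution Law \refT{invollawb} applies and gives (R7) on all atoms iff $\vphi\xz[\cc x y z]=\cc z y x\mo$ for every $\trip xyz$ in $\ez 3$ --- coset condition (ii); and Associative Law \refT{alawc} gives (R4) on all atoms iff $\cc x y z\scir\cc x z w=\vphi\yx[\cc y z w\scir\h\yx]\scir\cc x y w$ for every $\quadr xyzw$ in $\ez 4$ --- coset condition (iv). Hence all the axioms hold and $\cra CF$ is a relation algebra; by the Boolean Algebra \refT{disj} its Boolean reduct is complete and atomic with atoms the distinct relations $\r\xy\a$ for $\pair xy$ in $\mc E$, and since the operators are completely distributive, $\cra CF$ is a complete, atomic relation algebra, with base set $U$ and unit $E=\tbigcup\{\cs Gx\times\cs Gy:\pair xy\in\mc E\}$ read off from \refT{disj}.

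It remains to identify the subidentity atoms and show each is measurable; this is the only place where a fresh computation is needed, and thus the main obstacle, though it is routine and modeled entirely on the measurability part of the Group Frame \refT{closed}. By Identity \refT{identthm1} and semi-frame condition (i) we have $\id U=\tbigcup_{x\in I}\r\xx 0$ and $\r\xx 0=\idd{\G x}$, so the subidentity atoms are exactly the relations $\r\xx 0$. Fix $x$ in $I$. Semi-frame condition (i) forces $\h\xx=\k\xx=\{\ex x\}$, so $\kai\xx=|\G x|$ and the cosets of $\h\xx$ are the singletons of $\G x$; moreover coset condition (i) gives $\cc w x x=\h\wx$ directly, and coset conditions (i) and (ii) together with \refCo{con11} give $\cc x x z=\h\xz$, for all admissible $z,w$. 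Feeding these into \refD{smult} yields $\r\xx 0\otimes\r\xz\b=\r\xz\b$ and $\r\wx\b\otimes\r\xx 0=\r\wx\b$, so (using the associativity just established) $\r\xx 0\otimes E\otimes\r\xx 0=\tbigcup\{\r\xx\a:\a<\kai\xx\}$, which by Partition \refL{i-vi} equals $\G x\times\G x$. Using once more that $\vphi\xx$ is the identity on $\G x/\{\ex x\}$ and that $\cc x x x=\h\xx$, one checks from \refD{smult} and Converse \refT{convthm1} that $\r\xx\a$ is the graph of right translation of $\G x$ by the group element enumerated by $\a$, and that $\r\xx\a\mo\otimes\r\xx\a=\r\xx 0=\idd{\G x}\seq\id U$; hence every $\r\xx\a$ is a functional (indeed bijective) element of $\cra CF$, so the square $\r\xx 0\otimes E\otimes\r\xx 0$ is a sum of functional elements and $\r\xx 0$ is measurable. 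Finally, every element of $\cra CF$ below the square is a union of some of the atoms $\r\xx\a$, and such a union is functional only when it is a single $\r\xx\a$ (since $\r\xx\a\mo\otimes\r\xx\b$ lies below $\id U$ only for $\a=\b$); hence the nonzero functional elements below the square are exactly these $\kai\xx=|\G x|$ atoms, which is therefore the measure of $\r\xx 0$. Since $\id U$ is the sum of the measurable atoms $\r\xx 0$, the algebra $\cra CF$ is measurable.
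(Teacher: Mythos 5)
Your proposal is correct and follows essentially the same route as the paper: the four coset conditions are matched to the Identity, Cycle, Second Involution, and Associative Law Theorems, completeness and atomicity are read off from the Boolean Algebra Theorem, and measurability is checked on the diagonal blocks just as in the paper (your abstract functionality check $\r\xx\a\mo\otimes\r\xx\a=\r\xx 0$ replaces the paper's explicit description of the $\r\xx\a$ as bijections via \refL{otimes.1}, and your count of functional elements below the square is a slightly more explicit version of the same argument). The only quibble is the parenthetical appeal to the Semi-frame Corollary, which requires the relevant law to hold in $\cra CF$ rather than merely the coset condition on a pre-semi-frame; this aside is inessential, since being a semi-frame is already part of the hypothesis.
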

\begin{proof} The algebra $\cra CF$ is a complete and atomic
Boolean algebra of binary relations containing the identity
relation $\id U$, and closed under the set-theoretic operation of
converse and under the operation $\,\otimes\,$, by the definition
of a semi-frame, the assumption that $\mc F$ is a semi-frame, and
 Boolean Algebra \refT{disj}, Identity \refT{identthm1},
Converse \refT{convthm1}, and the definition of $\,\otimes\,$. The
Boolean axioms (R1)--(R3), the first involution law (R6), and the
two distributive laws (R8) and (R9) are valid in $\cra CF$, by
\refT{disj} and the remarks following \refD{semiframedef}\per The
associative law (R4), the identity law (R5), the second involution
law (R7), and the cycle law (R11) are also valid in $\cra CF$, by
Associative Law \refT{alawc}, Identity Law \refT{identthm2}, Second
Involution Law \refT{invollawb}, and Cycle Law \refT{clawc}
respectively, because $\mc F$ is assumed to satisfy the coset
conditions. Consequently, $\cra CF$ is a complete and atomic
relation algebra in which the universe consists of binary relations,
and all operations except the one for relative multiplication,
coincide with the standard set-theoretic operations of set relation
algebras.

The  atoms of the algebra $\cra CF$ are the relations of the form
$\r\xy\a$\comma and the subidentity atoms are the relations of the
form $\r\xx 0$\comma by  \refL{i-vi}, \refT{disj}, and the
construction of $\cra CF$. The identity relation $\id U$ is the
disjoint union of the subidentity atoms $\r\xx 0$, by
\refT{identthm1} and semi-frame condition (i).

 To prove that each subidentity atom $\r\xx 0$ is measurable, with
 measure the cardinality of the group $\G\wx$, it must
 be shown that the square
\begin{equation*}\tag{1}\labelp{Eq:costhm0}
\r\xx 0 \otimes E \otimes\r\xx 0
\end{equation*}
  is a union of $\kai\xx$ non-zero
 functional atoms.  The unit $E$ may be written in the form
\begin{equation*}\tag{2}\labelp{Eq:costhm1}
  E=\tbigcup\{\cs Gy\times\cs Gz:\pair
yz\in\mc E\}=\tbigcup\{\r\yz\a:\pair yz\in\mc E\text{ and
}\a<\kai\yz\}\comma
\end{equation*}
by \refL{i-vi} and \refT{disj}.  Consequently,
\begin{align*}\tag{3}\labelp{Eq:costhm2} \r\xx
0\otimes E \otimes \r\xx 0&= \r\xx 0\otimes
(\tbigcup\{\r\yz\a:\pair yz\in\mc E\text{ and }\a<\kai\yz\})
\otimes \r\xx 0\\ &= \tbigcup\{ \r\xx 0\otimes \r\yz\a \otimes
\r\xx 0:\pair yz\in\mc E\text{ and }\a<\kai\yz\}\comma
\end{align*} by \refEq{costhm1} and the distributivity of
$\,\otimes\,$ over arbitrary unions. If $x\neq y$ or $x\neq z$, then
\begin{equation*}\tag{4}\labelp{Eq:costhm3}
\r\xx 0\otimes\r\yz\a\otimes\r\xx 0=\varnot\comma
\end{equation*}
by the definition of the operation $\,\otimes\,$. On the other
hand, if $x=y$ and $x=z$, then
\begin{multline*}\tag{5}\labelp{Eq:costhm4}
\r\xx 0\otimes\r\yz\a\otimes\r\xx 0=\r\xx
0\otimes\r\xx\a\otimes\r\xx 0\\=\r\xx 0\rp\r\xx\a\rp\r\xx 0 =
\r\xx\a\per
\end{multline*}
The first equality uses the assumptions on $y$ and $z$. The second
equality  uses the assumption that $\mc F$ satisfies the coset
condition for the identity law, together with \refL{otimes.1} and
\refT{identthm2}, which ensures that condition (iii) of
\refL{otimes.1}, namely
\begin{equation*}\tag{6}\label{Eq:costhm6.1}
\cc x y z=\cc x x x=\h\xx=\h\xx\scir\h\xx=\h\xy\scir\h\xz\comma
\end{equation*}
 is satisfied.  The third equality   uses the fact
that $\r\xx 0=\id{\G x}$, and $\r\xx\lph$ is a subset of $\G x\times\G x$.
Combine \refEq{costhm2}--\refEq{costhm4}, and use
\refL{i-vi}, to arrive at
\begin{equation*}\tag{7}\labelp{Eq:costhm5}
\r\xx 0\otimes E \otimes\r\xx 0=\tbigcup\{\r\xx
\a:\a<\kai\xx\}=\G\wx\times\G\wx\per
\end{equation*}

Since $\h\xx=\k\xx=\{e_x\}$, the sets $\hs\xx\gm=\ks\xx\gm$ have the
form $\{\cs g\gm\}$, and therefore the relations $\r\xx\lph$ (for
$\lph<\kai\xx$) have the
form\begin{multline*}\tag{8}\labelp{Eq:costhm8}
\r\xx\lph=\tbigcup_\gm\{\hs\xx\gm\times(\ks\xx\gm\scir\ks\xx\lph)\}\\=\tbigcup_\gm\{\{\cs
g\gm\}\times\{\cs g\gm\scir\cs g\lph\}\}=\{\pair{\cs g\gm}{\cs
g\gm\scir\cs g\lph}:\gm<\kai\xx\}\comma
\end{multline*}  which is a function, and in fact a bijection.

It follows from \refEq{costhm5} and \refEq{costhm8} that the square \refEq{costhm0} is the disjoint union  of $\kai\xx$
functions.  Consequently, $\r\xx 0$ is
a measurable atom of measure $\kai\xx$. Combine this with the
observations of the previous paragraph to conclude that the relation
algebra $\cra CF$ is measurable.
\end{proof}

The theorem justifies the following definition.
\begin{df}\label{D:cradef}
  Suppose that $\mc F$ is a coset semi-frame that satisfies the
  coset conditions.  The relation algebra $\cra CF$ constructed from $\mc F$
  in Coset Semi-frame Theorem \textnormal{\ref{T:costhm}} is called
  the (\textit{full}) \textit{coset relation algebra}  on $\mc F$.  A \textit{general coset
  relation algebra} is defined to be an algebra that is embeddable
  into a full coset relation algebra.
\end{df}

The task of verifying  that a given group triple satisfies the
 semi-frame conditions and the coset conditions, and
therefore yields a full coset relation algebra, that is to say, it
yields an example of a measurable relation algebra, can be quite
complicated and tedious. Fortunately, some simplifications are
possible. To describe them, it is helpful to assume that the group
index set $I$ is linearly ordered, say by a relation $\,<\,$.
Roughly speaking, under the assumption of condition (i), condition
(ii) holds in general just in case it holds for each pair $\pair xy$
in $\mc E$ with $x<y$, and similarly for the other semi-frame
conditions.  Similar simplifications are possible for most of the
remaining semi-frame and coset conditions.  Actually, it is possible
to replace coset conditions (i)---(iii) with four simpler conditions
that do not simultaneously involve the formation of a coset inverse
and the application of a quotient isomorphism.

We begin with two lemmas. The first formulates some conditions that
are equivalent to coset condition  (ii) for the second involution
law and coset condition (iii) for the cycle law.

\begin{lm}\labelp{L:cysim} Let $\mc F$ be a semi-frame\comma  and  $\trip u v w  $ a triple
 in $\ez 3$\per Consider the following conditions on the coset
system of $\mc F$\per
\begin{enumerate}
\item[(i)]$\cc x y z\mo=\cc x z y$\quad for all permutations $\trip x y z$ of $\trip u v
w$\per
\item[(ii)] $\vphi \xz[\cc x y z]=\cc z y x\mo$\quad for all permutations $\trip x y z$ of
$\trip u v w$\per
\item[(iii)]$\vphi \xz[\cc x y z]=\cc z x y$\quad for all permutations $\trip x y z$ of $\trip u v
w$\per
\item[(iv)]$\vphi \xy[\cc x y z]=\cc y x z\mo$\quad for all permutations $\trip x y z$ of $\trip u v
w$\per
\item[(v)]$\vphi \xy[\cc x y z]=\cc y z x$\quad for all permutations $\trip x y z$ of $\trip u v
w$\per
\end{enumerate}
Conditions \textnormal{(iii)} and \textnormal{(v)} are
equivalent\per  Any two of conditions
\textnormal{(i)--(iv)}\comma\ and also any two of conditions
\textnormal{(i), (ii), (iv)} and \textnormal{(v),} imply all of
the other conditions\per
\end{lm}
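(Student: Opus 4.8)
The plan is to isolate one identity coming from semi-frame condition (iv) and then reduce every assertion to substitutions and relabelings. First I would record the ambient facts. By semi-frame conditions (i), (ii) and \refCo{con11}, $\varphi_{yx}=\varphi_{xy}^{-1}$ and $H_{yx}=K_{xy}$, $K_{yx}=H_{xy}$; hence, for a permutation $(x,y,z)$ of $(u,v,w)$, the coset $C_{xyz}$ lies in $G_x/(H_{xy}\circ H_{xz})$ and $\varphi_{xy}[C_{xyz}]$ lies in $G_y/(K_{xy}\circ H_{yz})=G_y/(H_{yx}\circ H_{yz})$, and each induced isomorphism commutes with coset inversion and composition. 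Since $(u,v)$ and $(v,w)$ lie in $\mathcal E$ and $\mathcal E$ is an equivalence relation, every permutation of $(u,v,w)$ lies in $\mathcal E_3$, so all six cosets $C_{xyz}$ are defined and all five conditions are meaningful. Now apply \refL{sfimage}(i) --- which for a semi-frame is exactly semi-frame condition (iv) --- with $Q=C_{xyz}$: since $C_{xyz}^{-1}\circ C_{xyz}\circ C_{xyz}=C_{xyz}$ in $G_x/(H_{xy}\circ H_{xz})$, this yields the identity
\[\varphi_{xz}[C_{xyz}]=\varphi_{yz}\bigl[\varphi_{xy}[C_{xyz}]\bigr]\qquad(\star)\]
valid for every permutation $(x,y,z)$ of $(u,v,w)$.

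Next I would prove that (iii) and (v) are equivalent, using $(\star)$ twice. Granting (v): $(\star)$ and (v) for $(x,y,z)$ give $\varphi_{xz}[C_{xyz}]=\varphi_{yz}[C_{yzx}]$, and (v) for $(y,z,x)$ rewrites the right-hand side as $C_{zxy}$, which is (iii) for $(x,y,z)$. Granting (iii): $(\star)$ and (iii) for $(x,y,z)$ give $\varphi_{yz}[\varphi_{xy}[C_{xyz}]]=C_{zxy}$, so $\varphi_{xy}[C_{xyz}]=\varphi_{zy}[C_{zxy}]$ (since $\varphi_{yz}^{-1}=\varphi_{zy}$ on the relevant subquotients), and (iii) for $(z,x,y)$ rewrites the right-hand side as $C_{yzx}$, which is (v) for $(x,y,z)$.

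It then remains to show that any two of (i)--(iv) imply all of them; the statement about any two of (i), (ii), (iv), (v) then follows by replacing (v) by (iii). The only use of (i) is relabeling: an equation such as $C_{zyx}^{-1}=C_{zxy}$ is exactly (i) for the permutation $(z,y,x)$, and $C_{yxz}^{-1}=C_{yzx}$ is (i) for $(y,x,z)$; the only use of the $\varphi$-conditions is to substitute one side for the other, occasionally after invoking $(\star)$ or the fact that $\varphi$ commutes with inversion. The pairs $\{(\mathrm{i}),(\mathrm{ii})\}$, $\{(\mathrm{i}),(\mathrm{iii})\}$, $\{(\mathrm{i}),(\mathrm{iv})\}$ and $\{(\mathrm{ii}),(\mathrm{iii})\}$ are handled by relabeling alone: for instance, from (ii) and (iii), comparing $\varphi_{xz}[C_{xyz}]=C_{zyx}^{-1}$ with $\varphi_{xz}[C_{xyz}]=C_{zxy}$ forces $C_{zyx}^{-1}=C_{zxy}$, which is (i). The remaining pairs use $(\star)$: for $\{(\mathrm{ii}),(\mathrm{iv})\}$, $(\star)$ together with (iv) gives $\varphi_{xz}[C_{xyz}]=\varphi_{yz}[C_{yxz}^{-1}]=\varphi_{yz}[C_{yxz}]^{-1}$, which (ii) for $(y,x,z)$ turns into $C_{zxy}$, establishing (iii); for $\{(\mathrm{iii}),(\mathrm{iv})\}$, the same computation with (iii) for $(y,x,z)$ in place of (ii) gives $\varphi_{xz}[C_{xyz}]=C_{zyx}^{-1}$, establishing (ii). In every case, once one member of $\{(\mathrm{i}),(\mathrm{ii}),(\mathrm{iii})\}$ beyond the hypotheses has been produced, the rest cascade: two of (i), (ii), (iii) yield all three by relabeling, hence (v) by the equivalence, and (i) together with (v) yields (iv) because $C_{yxz}^{-1}=C_{yzx}$ is an instance of (i).

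The conceptual content sits entirely in $(\star)$, that is, in semi-frame condition (iv) (\refD{semiframedef}); everything else is substitution and relabeling. Accordingly, I expect the only real difficulty to be bookkeeping: at each substitution one must keep track of which quotient group a given coset lies in and which subquotient each induced isomorphism acts on, so that the uses of ``$\varphi$ commutes with inversion'', of the cancellation law, and of $\varphi_{yz}^{-1}=\varphi_{zy}$ are all legitimate, and one must apply the convention $H_{yx}=K_{xy}$ consistently throughout. None of this is deep, but the indices proliferate; the argument is cleanest if it is organized around the single identity $(\star)$ and the remark that (i) is preserved under the handful of relabelings above.
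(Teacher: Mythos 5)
Your proposal is correct and takes essentially the same route as the paper: your identity $(\star)$ is exactly the paper's key consequence of semi-frame condition (iv) (that $\tau_{xyz}$ fixes $\cc xyz$, so $\vphi\yz[\vphi\xy[\cc xyz]]=\vphi\xz[\cc xyz]$), the equivalence of (iii) and (v) is argued the same way, and the pairwise implications are handled by the same substitution/relabeling computations. The only differences are cosmetic bookkeeping choices (e.g.\ for the pair (ii),(iv) you derive (iii) first where the paper derives (i), and for (i),(iv) your relabeling yields (v) whereas the paper derives (ii) via the inversion-compatibility of $\vphi\xz$), which do not affect correctness.
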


\begin{proof} First, observe that
\begin{equation*}\tag{1}\labelp{Eq:cysim1}
\vphi\yz[\vphi\xy[\cc x y z]]=\vphi\xz[\cc x y z]
\end{equation*}
holds by semi-frame condition (iv), since\[\tau_{xyz}(\cc x y
z)=\cc x y z\mo\scir\cc x y z\scir\cc x y
z  = \cc x y z\per\] Apply $\vphi\zy$ to both sides of
\refEq{cysim1}, and use the fact that $\vphi\zy$ is the inverse of
$\vphi\yz$, by semi-frame condition (ii), to obtain
\begin{equation*}\tag{2}\labelp{Eq:cysim3}
\vphi\zy[\vphi\xz[\cc x y z]]=\vphi\xy[\cc x y z]\per
\end{equation*}
The equivalence of (iii) and (v) is now easy to prove.  If (iii)
holds, then
\begin{equation*}
\cc y z x =\vphi\zy[\cc z x y]=\vphi\zy[\vphi\xz[\cc x y
z]]=\vphi\xy[\cc x y z]\comma
\end{equation*} by (iii) (with $z$, $x$, and $y$ in place of $x$, $y$,
and $z$ respectively), another application of (iii), and
\refEq{cysim3}. On the other hand, if (v) holds, then
\begin{equation*}
\cc z x y  =\vphi\yz[\cc y z x] =\vphi\yz[\vphi\xy[\cc x y
z]]=\vphi\xz[\cc x y z]\comma
\end{equation*}
by (v) (with $y$, $z$, and $x$ in place of $x$, $y$, and $z$
respectively), another application of (v), and \refEq{cysim1}.

The next step is to show that conditions (i) and (ii) imply all of
the remaining conditions. The derivation of (iii) and (iv) from
(i) and (ii) is easy. For (iii), use (ii) and (i) (with $x$ and
$z$ interchanged):
\[\vphi\xz[\cc x y z]=\cc z y x\mo=\cc z x y\per
\] For (iv), first use (i), (ii) (with $y$ and $z$ interchanged),
and (i) (with $y$, $z$, and $x$ in place of
$x$, $y$, and $z$ respectively)  to get
\[\vphi\xy[\cc x y z\mo]=\vphi\xy[\cc x z y]=\cc y z x\mo =\cc y x
z\per
\] Form the coset inverses of the first and last terms, and use
the isomorphism properties of $\vphi\xy$\comma to arrive at (iv). It
has already been shown that (v) follows from (iii), so conditions
(i) and (ii) do imply all of the remaining conditions.

To show that conditions (i) and (iii) imply all of the remaining
conditions, it suffices to derive (ii), by the observations of the
preceding paragraph. Use (iii) and (i) (with $x$ and $z$
interchanged) to obtain
\begin{equation*}
\vphi\xz[\cc x y z]=\cc z x y =\cc z y x\mo\per
\end{equation*}
 Similarly, to show that conditions (i) and (iv) imply all of the
remaining conditions, it suffices to derive (ii). First, use (i),
(iv) (with $y$ and $z$ interchanged), and (i) (with $z$, $x$, and
$y$ in place of $x$, $y$, and $z$ respectively) to obtain
\begin{equation*}
\vphi\xz[\cc x y z\mo]=\vphi\xz[\cc x z y]=\cc z x y\mo=\cc z y
x\per
\end{equation*}  Form the coset inverses of
the first and last terms, and use the isomorphism properties of
$\vphi\xz$\comma to arrive (ii).

To prove that (ii) and (iii) imply all of the remaining
conditions, it suffices to derive (i). Use (iii) and (ii) to get
\[\cc z x y=\vphi\xz[\cc x y z]=\cc z y x\mo\per
\] Interchange $x$ and $z$ to arrive at (i).  Similarly, to prove that
(ii) and (iv) imply all of
the remaining conditions, it suffices to derive (i). Use (ii)
(with $x$ and $y$ interchanged) and the isomorphism properties of
$\vphi\xy$, (iv), \refEq{cysim1}, and (ii) to obtain
\begin{equation*}
\cc z x y =\vphi\yz[\cc y x z\mo]=\vphi\yz[\vphi\xy[\cc x y
z]]=\vphi\xz[\cc x y z]=\cc z y x\mo\per
\end{equation*} Again, interchange $x$ and $z$ to arrive at (i).

Finally, to show that (iii) and (iv) imply the remaining
conditions,  it suffices to derive (i). Use (iii) (with $x$ and
$y$ interchanged) and the isomorphism properties of $\vphi\yz$,
(iv), \refEq{cysim1}, and (iii) to obtain
\begin{equation*}
\cc z y x\mo  =\vphi\yz[\cc y x z\mo]=\vphi\yz[\vphi\xy[\cc x y
z]]=\vphi\xz[\cc x y z]=\cc z x y\per
\end{equation*} As before, interchange $x$ and $z$ to arrive at (i).
\end{proof}

The second lemma facilitates the verification of the second and
third  coset conditions in cases when some of the indices
coincide.
\begin{lm}\labelp{L:idcy} Let $\mc F$ be a semi-frame\per If
$\cc x y z=\hh
$
 for every permutation $\trip x y z$ of a given triple in $\ez 3$\co then
\[\cc x y z\mo=\cc x z y\qquad\text{and}\qquad
\vphi\xy[\cc x y z]=\cc y z x
\]
for every permutation of the given triple\po
\end{lm}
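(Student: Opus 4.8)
The plan is to fix a triple $\trip u v w$ in $\ez 3$; since $\mc E$ is an equivalence relation, all permutations of $\trip u v w$ also lie in $\ez 3$, and the hypothesis ``$\cc x y z=\hh$ for every permutation $\trip x y z$ of $\trip u v w$'' is itself invariant under permutations of $\trip u v w$. Hence it suffices to prove each of the two asserted equalities for one fixed permutation $\trip x y z$; the general statement then follows by reapplying the argument to each permutation in turn.

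For the first equality $\cc x y z\mo=\cc x z y$ I would argue group-theoretically. By hypothesis $\cc x y z=\hh=\h\xy\scir\h\xz$, which, being the complex product of two normal subgroups of $\G\wx$, is itself a normal subgroup of $\G\wx$ and therefore equals its own coset inverse; thus $\cc x y z\mo=\h\xy\scir\h\xz$. Applying the hypothesis to the permutation $\trip x z y$ gives $\cc x z y=\h\xz\scir\h\xy$, and this complex product coincides with $\h\xy\scir\h\xz$ because both factors are normal in $\G\wx$. Hence $\cc x y z\mo=\cc x z y$; this step is entirely routine.

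For the second equality $\vphi\xy[\cc x y z]=\cc y z x$ I would use only semi-frame conditions (ii) and (iii) together with \refCo{con11}. On the left, the hypothesis and semi-frame condition (iii) give $\vphi\xy[\cc x y z]=\vphi\xy[\h\xy\scir\h\xz]=\k\xy\scir\h\yz$; this is legitimate because $\cc x y z$, being a coset of $\h\xy\scir\h\xz$, is a union of cosets of $\h\xy$, to which $\vphi\xy$ applies. On the right, applying the hypothesis to the permutation $\trip y z x$ gives $\cc y z x=\h\yz\scir\h\yx$, and \refCo{con11} --- available because semi-frame condition (ii) makes $A$ closed under converse --- identifies $\h\yx$ with $\k\xy$; since $\h\yz$ and $\k\xy$ are both normal subgroups of $\G\wy$, their complex product commutes, so $\cc y z x=\k\xy\scir\h\yz$. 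Comparing the two computations completes the verification. (These two equalities are precisely conditions (i) and (v) of \refL{cysim}, so by that lemma they in fact entail conditions (ii)--(iv) of \refL{cysim} as well, though only (i) and (v) are needed here.)

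I do not anticipate a genuine obstacle. The only points demanding care are the usual bookkeeping about which normal subgroup each coset belongs to, and the observation that the hypothesis $\cc x y z=\hh$ trivializes the inner automorphism $\tau_{xyz}$, so that semi-frame condition (iv) is never used; both equalities reduce to the facts that an isomorphism sends the identity coset to the identity coset and that normal subgroups of a group commute as complexes.
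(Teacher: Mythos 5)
Your proposal is correct and follows essentially the same route as the paper: the first equality comes from the group-theoretic fact that the identity coset $\h\xy\scir\h\xz$ is closed under inverses (and the hypothesis applied to the permuted triple), and the second from semi-frame condition (iii) together with \refCo{con11}, with semi-frame condition (iv) never needed. The only differences are cosmetic bookkeeping (you invoke normality and commuting of the subgroups where the paper uses the second involution law for cosets and routes through $\cc y x z\mo=\cc yzx$), so nothing further is required.
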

\begin{proof} Assume that
\begin{equation*}\tag{1}\labelp{Eq:idcy1}
\cc x y z=\hh
\end{equation*}
for all  permutations $\trip x y z$ of a given triple in $\ez 3$.
Obviously,
\begin{equation*}%\tag{2}\labelp{Eq:idcy4}
\cc x y z\mo=(\hh)\mo=\h\xz\mo\scir\h\xy\mo=\h\xz\scir\h\xy=\cc x
z y
\end{equation*}
for all such permutations, by \refEq{idcy1},  the second involution law for cosets, the fact that
$\h\xz$ and $\h\xy$ are subgroups of $\G\wx$ and hence closed under
inverses, and \refEq{idcy1}  (with $y$ and $z$ interchanged).
Thus, the first equation in the conclusion  holds.

Semi-frame conditions (ii) and (iii), together with \refCo{con11}
and the fact that $\h\yx\scir\h\yz$ is a subgroup of $\G\wy$, imply
that
\begin{equation*}\tag{2}\labelp{Eq:idcy2}
\vphi\xy[\hh]=\kh=\h\yx\scir\h\yz=(\h\yx\scir\h\yz)\mo\per
\end{equation*}
Consequently,
\begin{equation*}
\vphi\xy[\cc x y z]=\vphi\xy[\hh]=(\h\yx\scir\h\yz)\mo=\cc y x
z\mo=\cc yzx\comma
\end{equation*}
by \refEq{idcy1}, \refEq{idcy2},  \refEq{idcy1} (with $x$ and $y$
interchanged), and the first conclusion of the lemma (with $x$ and
$y$ interchanged). Thus, the second equation in the conclusion
holds.
\end{proof}

The next theorem formulates a set of simplified semi-frame and coset
conditions.

\begin{theorem}\labelp{T:simcos} A group triple $\mc  F$ is a
coset semi-frame that satisfies the first three coset conditions
if and only if the following eight conditions are satisfied\per
\begin{enumerate}
\item[(i)]
$\vphi {xx}$ is the identity automorphism of $\G x/\{\e x\}$ for
every $\wx$ in $I$\per
\item [(ii)]
$\vphi \yx=\vphi \xy\mo$ for every pair $ \pair x y$ in $\mc E$
with $x<y$\per
\item[(iii)] $\vphi \xy[\h
\xy\scir\h \xz]=\k\xy\scir\h\yz$ and $\vphi \yz[\k \xy\scir\h
\yz]=\k\xz\scir\k\yz$  for every triple  $\trip x y z$  in $\ez 3$
with $x<y<z$\per
\item[(iv)] $\hvphs \xy\rp\hvphs \yz=\ttrip x y z\rp\hvphs \xz$ for every triple $\trip x y z$
in $\ez 3$ with $x<y<z$\per
\item[(v)]
$\cc x  x y=\cc x y x=\cc x y y=\h\xy$ for all pairs $\pair x y$
in $\mc E$\per
\item[(vi)] $\cc x y z\mo=\cc x z y$ for all triples $\trip x y z$ in $\ez 3$ with
$\wx$\co $\wy$\co $\wz$ mutually distinct\per
\item[(vii)] $\vphi\xy[\cc x y z]=\cc y z x$ for all triples $\trip x y z$ in $\ez 3$ with $x<y<z$\per
\item[(viii)]$\vphi\xz[\cc x y z]=\cc z x y$ for all triples $\trip x y z$  in  $\ez 3$ with $x<y<z$\per
\end{enumerate}
 If  $\mc F$ is a group triple that satisfies
 conditions \textnormal{(i)--(viii)}\comma then $\mc F$ satisfies the fourth coset condition
 if and only if
\begin{enumerate}
\item[(ix)]$\cc x y z\scir\cc x z w=\vphi\yx[\cc y z w\scir\h\yx]\scir\cc x y w$ for all quadruples
$\quadr x y z w$ in $\ez 4$ with $x<y<z<w$\per
\end{enumerate}
\end{theorem}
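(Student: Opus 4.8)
The plan is to prove the two biconditionals separately, guided by the observation that each of (i)--(viii) is, up to the restriction to linearly ordered tuples, one of the semi-frame conditions of \refD{semiframedef} or one of the first three coset conditions of \refD{costra}, rewritten by means of \refL{cysim} and \refL{idcy} so as never to combine a coset inverse with an application of a quotient isomorphism; the real content is thus the passage between the ``sorted'' form of a condition and its general form. For the implication from ``$\mc F$ is a semi-frame satisfying the first three coset conditions'' to (i)--(viii): conditions \refT{simcos}(i), (ii), (iv) and the first equation of \refT{simcos}(iii) are just semi-frame conditions (i), (ii), (iv) and (iii); the second equation of \refT{simcos}(iii) comes from applying semi-frame condition (iii) to the triple $\trip y z x$ (which lies in $\ez 3$ since $\mc E$ is an equivalence relation) and using $\k\xy=\h\yx$ with its analogues together with the commuting of normal subgroups; \refT{simcos}(v) holds because $\cc x y y=\h\xy$ is coset condition (i) while $\cc x y x=\h\xy$ and $\cc x x y=\h\xy$ follow by feeding the degenerate triples into coset conditions (ii) and (iii), using $\h\xx=\{\e x\}$ and the fact that $\vphi\xy$ sends identity coset to identity coset; and \refT{simcos}(vi), (vii), (viii) are conditions (i), (v), (iii) of \refL{cysim}, which that lemma delivers from its conditions (ii) and (iv), that is, from coset conditions (ii) and (iii).

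For the converse I would work in stages. First recover the semi-frame conditions for \emph{all} tuples: (i) is verbatim; (ii) follows by applying \refT{simcos}(ii) to the sorted rearrangement of each pair (with $\vphi\xx$ an involution when $x=y$); and the full condition (iii) of \refD{cosfra} follows for all triples by trivializing the repeated-index cases through $\h\xx=\{\e x\}$ and by deriving each of the six orderings of a distinct triple from \refT{simcos}(iii) and (iv) for the sorted ordering, rewriting via semi-frame condition (ii) and \refCo{con11} --- here \refT{simcos}(iv) for the sorted triple is what controls the ``mixed'' isomorphism. Next, semi-frame condition (iv) for all triples: repeated-index cases are trivial once \refT{simcos}(v) makes the relevant $\tau$ conjugation by an identity coset, and for a distinct triple with sorted form $x<y<z$ I would transport \refT{simcos}(iv) to each of the five other orderings by conjugating with the appropriate induced isomorphisms and checking that the transported inner automorphism is exactly the $\tau$ of the new ordering --- the coset identities this needs (for instance $\vphi\yx[\cc y x z]=\cc x y z\mo$) being supplied by \refT{simcos}(vi), (vii), (viii) for sorted indices together with semi-frame condition (ii). At this point $\mc F$ is a semi-frame, so \refL{cysim} and \refL{idcy} are available: coset condition (i) is \refT{simcos}(v) restricted; coset conditions (ii) and (iii) hold on repeated-index triples by \refT{simcos}(v) and \refL{idcy}, and on distinct triples because \refT{simcos}(vi), (vii), (viii) together with semi-frame condition (ii) yield conditions (i), (iii), (v) of \refL{cysim} for all permutations (a short check over the six orderings), whence \refL{cysim} returns its conditions (ii) and (iv), which are coset conditions (ii) and (iii).

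For the last biconditional one direction is mere restriction. For the other I may, by the part just proved, assume the semi-frame conditions and the first three coset conditions for all tuples, together with \refT{simcos}(ix), and I must obtain coset condition (iv) for every quadruple in $\ez 4$. Quadruples with a repeated index are handled by collapsing each side via \refT{simcos}(v) and semi-frame condition (iii). For a quadruple $\quadr x y z w$ of distinct indices with sorted arrangement $a<b<c<d$, the four cosets occurring in coset condition (iv) for $\quadr x y z w$ are attached, one apiece, to the four three-element subsets of $\{a,b,c,d\}$, so by coset conditions (ii) and (iii) each can be written in terms of the sorted cosets $\cc a b c$, $\cc a c d$, $\cc b c d$, $\cc a b d$; substituting turns the target into an equation among these four cosets and the isomorphisms $\vphi$, which I would then derive from \refT{simcos}(ix) by group manipulation --- a finite but lengthy case analysis over the orderings of a four-element set, shortened by the symmetries already in hand.

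I expect the crux to be the two ``transport'' steps, which are of a kind: moving semi-frame condition (iv) from the sorted triple to the other five orderings, where conjugating the induced isomorphism $\hvphs\xz$ by the transported coset must be shown to reproduce precisely the inner automorphism $\tau$ of the new ordering; and the analogous transport of coset condition (iv) across the orderings of a quadruple. Both rely on the coset identities packaged in \refL{cysim}, and both demand care about the order of operations in order to avoid circularity --- in particular, semi-frame condition (iv) must be secured for all tuples before \refL{cysim} is quoted, since that lemma presupposes a semi-frame.
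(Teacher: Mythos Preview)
Your proposal is essentially correct and follows the same architecture as the paper's proof: establish the semi-frame conditions first (being careful, as you note, not to invoke \refL{cysim} before semi-frame condition (iv) is in hand), then use \refL{cysim} and \refL{idcy} to pass between coset conditions (ii), (iii) and the simplified conditions (vi)--(viii); the transport of semi-frame condition (iv) from the sorted triple to all orderings is done exactly as you describe, via the intertwining identities $\ttrip xyz\rp\hvphs\xy=\hvphs\xy\rp\ttrip yzx$ and $\ttrip xyz\rp\hvphs\xz=\hvphs\xz\rp\ttrip zxy$, which come directly from (vi), (vii), (viii) without \refL{cysim}.

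The one place where you and the paper diverge is the second biconditional. You propose to rewrite each permuted instance of coset condition (iv) in terms of the four sorted cosets $\cc abc$, $\cc abd$, $\cc acd$, $\cc bcd$ and then check, ordering by ordering, that the resulting equation is equivalent to (ix). The paper instead proves once and for all that coset condition (iv) is permutation-invariant: starting from the sorted instance, it derives the instances obtained by the transpositions $(z\ w)$, $(y\ w)$, and $(x\ y)$ --- using \refL{cysim}(i),(v) and \refL{sfimage}(iii) for the middle one --- and since these three transpositions generate the full symmetric group on four letters, every permuted instance follows. Your approach would work but is a genuine $24$-case grind; the paper's buys a clean reduction to three calculations and makes the underlying symmetry explicit.
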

\begin{proof} Suppose that a group triple
$\mc F$ satisfies conditions (i)--(viii) of the theorem.  The
proof that semi-frame conditions (i)--(iii) must hold is easy, and
is in fact exactly the same as in the case of the corresponding
simplification of the group frame conditions for group pairs (see
Theorem 4.4 and its proof in \cite{giv1}). The details are
therefore   omitted. Turn  to the verification of semi-frame
condition (iv).

Consider a triple $\trip xyz$ in $\ez 3$, and assume first that
not all of the indices are distinct, say $\wx=\wy$.  The mapping
$\vphi\xy$ is  the identity automorphism of $\G x/\{e_x\} $, by
condition (i), so that
\begin{equation*}%
  \h \xy=\h\xx=\{\cs ex\}=\k\xx=\k\xy\comma\qquad
\h\xz=\h\yz\comma\qquad\k\xz=\k\yz\comma
\end{equation*}
and therefore
\begin{equation*}
  \h \xy\scir\h\xz=\h\xz\comma\quad\k\xy\scir\h\yz=\h\yz=\h\xz\comma\quad
\k\xz\scir\k\yz=\k \yz\scir\k \yz=\k\yz\per
\end{equation*}  It follows
that  the isomorphism $\vphih\xy$ induced by $\vphi\xy$ on
$\G\wx/(\h\xy\scir\h\xz)$ coincides with the identity automorphism
of $\G\wx/\h\xz$, the isomorphism $\vphih\yz$ on
$\G\wy/(\k\xy\scir\h\yz)$ coincides with $\vphi\yz$, and the
isomorphism $\vphih\xz$ induced by $\vphi\xz$ on
$\G\wx/(\h\xy\scir\h\xz)$   coincides with $\vphi\xz$. On the
other hand, the coset that determines the inner automorphism
$\ttrip xyz$ is the subgroup \[\cc xyz=\cc xxz=\h\xz\comma\] by
condition (v), so that $\ttrip xyz$ must be the identity
automorphism of $\G\wx/\h\xz$. Consequently,
\[\vphih\xy\rp\vphih\yz=\vphi\yz=\vphi\xz=\ttrip
xyz\rp \vphih\xz\comma\]so semi-frame condition (iv) holds in this
case.  The cases when $\wy=\wz$ and when $\wx=\wz$ are treated in a
completely analogous fashion.

It remains to consider the case when $\wx$, $\wy$, and $\wz$ are
all distinct.  Condition (vi) of the theorem implies that
\begin{alignat*}{3}
\cc x y z\mo&=\cc x z y,&\qquad \cc y x z\mo&=\cc y z x,&\qquad\cc
z x y\mo&=\cc z y x\comma\tag{1}\labelp{Eq:simpsfr1}\\
\intertext{from which it follows that} \ttrip x y z\mo&=\ttrip x z
y,&\qquad\ttrip  y z x\mo &= \ttrip y x z,&\qquad\ttrip z x
y\mo&=\ttrip z y x\per\tag{2}\labelp{Eq:simpsfr2}
\end{alignat*}
For example, for every coset $D$ in $\G\wy/(\h\yz\scir\h\yx)$, we
have
\begin{multline*}
  \ttrip yxz(\ttrip yzx(D))=\ttrip yxz(\cc yzx\mo\scir D\scir\cc yzx)
  =\cc yxz\mo\scir(\cc yzx\mo\scir D\scir \cc yzx)\scir\cc yxz\\=\cc
  yzx\scir\cc yzx\mo\scir D\scir\cc yzx\scir\cc yzx\mo=D\comma
\end{multline*}
by the definition of $\ttrip yzx$, the definition of $\ttrip yxz$,
the second equation in \refEq{simpsfr1}, and the laws of group
theory.  This argument shows that the composition of $\ttrip yzx$
and $\ttrip yxz$ is the identity function on its domain.  The same
is also true of the reverse composition, so these two inner
automorphisms are the inverses of one another.

The next step is to check that
\begin{equation*}\tag{3}\labelp{Eq:simpsfr21}
\ttrip x y z\rp\hvphs\xy = \hvphs\xy\rp\ttrip y z
x\qquad\text{and}\qquad \ttrip x y
z\rp\hvphs\xz=\hvphs\xz\rp\ttrip z x y\per
\end{equation*} To verify the first equation, consider an
arbitrary coset   $D$ in $\G\wx/(\hh)$\per  The definition of
$\ttrip xyz$, the isomorphism properties of $\vphi\xy$, condition
(vii) of the theorem, and the definition of $\ttrip yzx$ imply
that
\begin{multline*}
\vphi\xy[\ttrip x y z[D]]=\vphi\xy[\cc x y z\mo\scir D\scir \cc x
y z]=\vphi\xy[\cc x y z]\mo\scir\vphi\xy[ D]\scir \vphi\xy[\cc x y
z]\\= \cc y z x \mo\scir\vphi\xy[ D]\scir  \cc y z x= \ttrip y z x
[\vphi\xy[ D]]\per
\end{multline*}
  An analogous argument, using condition (viii) in place of
condition (vii), establishes the second equation in
\refEq{simpsfr21}.

Consider finally the case when all of the indices $x$, $y$, and $z$
are distinct. Assume $x<y<z$, and use condition (iv) of the theorem
to obtain
\begin{equation*}\tag{4}\labelp{Eq:simpsfr23}
\hvphs\xy\rp\hvphs\yz=\ttrip x y z\rp\hvphs\xz\per
\end{equation*}
Compose both sides of this equation on the right  with
$\hvphs\yz\mo$, and on the left with $\ttrip x y z\mo$, to
arrive at
\[\ttrip x y z\mo\rp\hvphs\xy= \hvphs\xz\rp\hvphs\yz\mo\per
\]
The mapping $\ttrip x y z\mo$ coincides with $\ttrip x z y$\comma
by \refEq{simpsfr2}, and  $\hvphs\yz\mo$ coincides with
$\hvphs\zy$, because, as has already been pointed out,
semi-frame condition (ii) is valid in $\mc F$. The previous
equation may therefore be rewritten in the form
\begin{equation*}\tag{5}\labelp{Eq:simpsfr24}
\hvphs\xz\rp\hvphs\zy=\ttrip x z y\rp\hvphs\xy\comma
\end{equation*}
which is a permuted version of  \refEq{simpsfr23} in which the
second and third indices $y$ and $z$ have been transposed. Compose
both sides of \refEq{simpsfr23} on the right with
$\hvphs\xz\mo$ and on the left with $\hvphs\xy\mo$ to obtain
\[\hvphs\yz\rp\hvphs\xz\mo=\hvphs\xy\mo\rp\ttrip x y z\per
\]
Observe that
\[\hvphs\xy\mo\rp\ttrip x y z =\hvphs\xy\mo\rp\ttrip x y z \rp\hvphs\xy\rp\hvphs\xy\mo
=\hvphs\xy\mo\rp\hvphs\xy\rp\ttrip y z x \rp\hvphs\xy\mo
= \ttrip y z x \rp\hvphs\xy\mo\comma\] by the properties of
isomorphism composition and  \refEq{simpsfr21}.  It follows from
these computations and from the validity of semi-frame condition
(ii) in $\mc F$ that
\begin{equation*}\tag{6}\labelp{Eq:simpsfr25}
\vphih\yz\rp\vphih\zx=\hvphs\yz\rp\hvphs\xz\mo=\hvphs\xy\mo\rp\ttrip x y z=\ttrip y z x
\rp\hvphs\xy\mo= \ttrip y z x \rp\hvphs\yx\comma
\end{equation*}
which is a permuted version of \refEq{simpsfr23} in which the
indices have been shifted one to the left  modulo 3, so that $x$,
$y$, and $z$ have been replaced by $y$, $z$, and $x$ respectively.
This argument shows that the two permuted versions of
\refEq{simpsfr23}, the first obtained by transposing the last two
indices $y$ and $z$ of the triple $\trip xyz$ to arrive at
\refEq{simpsfr24}, and the second by shifting each of the indices
$x$, $y$, and $z$ of the triple to the left by one modulo 3 to
arrive at \refEq{simpsfr25}, are valid in $\mc F$.  All
permutations of the triple $\trip xyz$ may be obtained by
composing these two permutations.  For example,   transpose the
last two indices of \refEq{simpsfr23}, permuting $\trip xyz$ to
$\trip xzy$, to  obtain  \refEq{simpsfr24}, and then use
\refEq{simpsfr25} to shift the indices of \refEq{simpsfr24} to the
left by one modulo 3, permuting $\trip xzy$ to $\trip zyx$, to
arrive at
\[\vphih \zy\rp\vphih\yx=\ttrip zyx\rp\vphih\zy\per\] It follows
that semi-frame condition (iv) is valid in $\mc F$\per

The next step in the proof  is the verification of the coset
conditions for the identity law, the second  involution law, and
the cycle law under the assumption of conditions (i)--(viii) of
the theorem. Certainly, $\mc F$ will satisfy the coset condition
for the identity law, since this is just the equality of the last
two cosets in condition (v) of the theorem. In order to verify the
coset conditions for the second involution law and the cycle law,
which coincide with conditions (ii) and (iv) in \refL{cysim},  it
suffices to show that conditions (i) and (v) of that  lemma,
namely
\begin{align*}
\cc x y z\mo&=\cc x z y\tag{10}\labelp{Eq:simcos1}
\\ \intertext{and}
\vphi\xy[\cc x y z]&=\cc y z x\comma\tag{11}\labelp{Eq:simcos2}
\end{align*} hold
for all triples $\trip x y z$ in $\ez 3$\per If two of the
indices, say
 $x$ and $y$, are equal, then
\begin{alignat*}{4}
  \cc xyz&=\cc xxz&&=\h\xz&&=\{e_\wx\}\scir\h\xz&&=\h\xy\scir\h\xz\comma\tag{12}\label{Eq:simcos2.1}\\
  \cc xzy&=\cc
  xzx&&=\h\xz&&=\h\xz\scir\{e_\wx\}&&=\h\xz\scir\h\xy\comma\\
  \cc zxy&=\cc zxx&&=\h\zx&&=\h\zx\scir\h\zx&&=\h\zx\scir\h\zy\comma
\end{alignat*}
by the assumption on $x$ and $y$, condition (v) (with $z$ in place
of $y$),  condition (i), which implies that $\h\xy=\{e_\wx\}$, and,
for the second to the last equality in the last line, the assumption
that $\h\zx$ is a subgroup of $\G\wz$ and therefore closed under
composition. It is clear from this argument that \refEq{simcos2.1}
holds  for all permutations of the indices $x$, $y$, and $z$. Apply
\refL{idcy} to arrive at \refEq{simcos1}. The cases $y=z$ and  $x=z$
are handled in a similar fashion.

As regards the verification of \refEq{simcos2}, if two of the
indices, say $x$ and $y$ are equal, then \refEq{simcos2.1} holds for
all permutations of the variables $x$, $y$, and $z$, and therefore
\refL{idcy} yields \refEq{simcos2}. A similar argument applies if
$y=z$ or $x=z$.

Assume now that all three indices $x$, $y$, and $z$ are distinct.
If $x<y<z$, then \refEq{simcos2} holds, by condition (vii) of the
theorem.  To derive the permuted version of \refEq{simcos2} in
which the indices $x$ and $y$ are transposed, use condition (vi),
the isomorphism properties of $\vphi\xy$, condition (vii), and
condition (vi) (with $y$, $z$, and $x$ in place of $x$, $y$, and
$z$ respectively) to obtain
\begin{equation*}
\vphi\xy[\cc x z y] =\vphi\xy[\cc x y z\mo]=\vphi\xy[\cc x y
z]\mo=\cc y z x\mo=\cc y x z\per
\end{equation*}
Apply $\vphi\yx$ to  the first and last terms in this string of
equalities, and use the fact that $\vphi\yx$ is the inverse of
$\vphi\xy$, by semi-frame condition (ii), to arrive at
\begin{equation*}\tag{13}\labelp{Eq:simcos3}
\vphi\yx[\cc yxz] =\cc xzy\per
\end{equation*} To derive the permuted version of \refEq{simcos2}
in which $x$, $y$, and $z$ are shifted one  to the right modulo 3
to obtain the equation for $z$, $x$, and $y$ respectively,
\begin{equation*}\tag{14}\labelp{Eq:simcos4}
\vphi\zx[\cc z x y]=\cc x y z\comma
\end{equation*}
apply $\vphi\zx$ to  both sides  of  condition (viii), and use
semi-frame condition (ii) (with $z$ in place of $y$).

The permutation of the triple $\trip xyz$ implicit in
\refEq{simcos3} that is obtained by transposing the first two
indices to obtain $\trip yxz$, and the permutation of the triple
implicit in \refEq{simcos2} that is obtained by shifting each index
to the right by one modulo 3 to obtain $\trip zxy$, together
generate all permutations of $\trip xyz$, and hence all permutations
of \refEq{simcos2}. For example, use \refEq{simcos3} to shift all
the indices of \refEq{simcos2} to the right by one modulo 3,
permuting $\trip xyz$ to $\trip zxy$ and arriving at
\refEq{simcos4}, and then repeat this process on \refEq{simcos4},
permuting $\trip zxy$ to $\trip yzx$, to arrive at
\[\vphi \yz[\cc yzx]=\cc zxy\per\]  From these observations, it is clear  that
\refEq{simcos2} holds for all permuted versions  of a given triple
of distinct elements in $\ez 3$. Combine this with the arguments
following \refEq{simcos2} to see that \refEq{simcos2} holds for all
triples  in $\ez 3$. Use \refEq{simcos1}, \refEq{simcos2}, and
\refL{cysim} to conclude the coset conditions for the second
involution law and the cycle law hold in $\mc F$. This completes the
derivation of the coset conditions for the identity law, the second
involution law, and the cycle law from conditions (i)--(viii) above.

To establish the reverse implication,   assume  $\mc F$ is a
semi-frame  satisfying the coset conditions for the identity law,
the second involution law, and the cycle law. Certainly, $\mc F$
satisfies conditions (i)--(iv) of the theorem, because these
conditions are special cases of the semi-frame conditions. To see
that $\mc F$ satisfies condition (v), use the coset condition for
the identity law  for the pair $\pair yx$, which says that $\cc
yxx=\h\yx$, use the definition of $\vphi\yx$\comma and use
semi-frame condition (ii) in the form of \refCo{con11} (with $x$
and $y$ interchanged), to obtain
\begin{equation*}\tag{15}\label{Eq:sico16}
  \vphi\yx[\cc y x x]=\vphi\yx[\h\yx]=\k\yx=\h\xy\per
\end{equation*}
The coset conditions for the second involution law and the cycle
law are conditions (ii) and (iv) of \refL{cysim}, so they imply
all of the other conditions of the lemma.  In particular, they
imply  (v) (with $y$, $x$, and $x$ in place of  $x$, $y$, and $z$
respectively), so
\begin{equation*}\tag{16}\label{Eq:sico17}
\vphi\yx[\cc y x x]=\cc x x y\per
\end{equation*}
 Combine \refEq{sico16} and \refEq{sico17} to arrive at
\begin{equation*}\tag{17}\label{Eq:sico18}
  \cc xxy=\h\xy\per
\end{equation*} Invoke \refL{cysim} again, this time using (i)
(with $x$ and $y$ in place of $y$ and $z$ respectively), to obtain
\[\cc xxy\mo=\cc xyx\per\] Combine this equation
with \refEq{sico18}, and use the fact that $\h\xy$ is a subgroup
of $\G\wx$ and therefore closed under inverse, to arrive at
\begin{equation*}\tag{18}\label{Eq:sico19}
  \cc xyx=\cc xxy\mo=\h\xy\mo=\h\xy\per
\end{equation*} Together, the coset condition for the identity
law, \refEq{sico18}, and \refEq{sico19} imply condition (v) of the
theorem. To derive conditions (vi), (vii), and (viii) of the
theorem, use \refL{cysim} again, and in fact parts (i), (v), and
(iii) respectively. This completes the proof of the first
assertion of the theorem.

To prove the second assertion of the theorem, suppose that $\mc F$
satisfies conditions (i)--(viii) of  the theorem. It follows from
the first part of the theorem that $\mc F$ must be a semi-frame
that satisfies the first three coset conditions.  The key step in
the argument is showing that $\mc F$ satisfies the coset condition
for  the associative law for one quadruple of elements in $\ez 4$
if and only if it satisfies the   condition for every permutation
of that quadruple.

Fix a quadruple $\quadr x y z w$ in $\ez 4$ of not necessarily
distinct elements\comma and suppose that
\begin{equation*}\tag{19}\labelp{Eq:assoc1}
\cc x y z\scir\cc x z w = \vphi\yx[\cc y z w\scir\h\yx]\scir\cc x
y w\per
\end{equation*}
The immediate goal is to derive a permuted version of
\refEq{assoc1} in which the indices $z$ and $w$ have been
transposed. Form the coset inverses of both sides of
\refEq{assoc1}, and apply the second involution law for cosets, to
obtain
\begin{equation*}\tag{20}\labelp{Eq:assoc2}
\cc x z w\mo\scir\cc x y z\mo = \cc x y w\mo\scir\vphi\yx[\cc y z
w\scir\h\yx]\mo\per
\end{equation*}
Conditions (ii) and (iv) in \refL{cysim} hold for all triples of
indices in $\ez 3$, because $\mc F$ satisfies the coset conditions
for the second involution law and the cycle law.  Consequently,
part (i) of the lemma holds for all such triples.  Use it
repeatedly on different triples to  obtain
\begin{equation*}\tag{21}\labelp{Eq:assoc3}
\cc x y z\mo=\cc x  z y,\qquad\cc x y w\mo=\cc x  w y,\qquad \cc x
z w\mo=\cc x  w z,\qquad\cc y z w\mo=\cc y w z\per
\end{equation*} Expand the second term on the right side of
\refEq{assoc2} as follows:
\begin{multline*}\tag{22}\labelp{Eq:assoc4.1}
\vphi\yx[\cc y z w\scir\h\yx]\mo=\vphi\yx[(\cc y z
w\scir\h\yx)\mo]=\vphi\yx[\h\yx\mo\scir\cc y z
w\mo]\\=\vphi\yx[\h\yx\scir\cc y z w\mo]=\vphi\yx[\cc y z
w\mo\scir\h\yx]=\vphi\yx[\cc y w z\scir\h\yx]\comma
\end{multline*}
by the isomorphism properties of $\vphi\yx$\comma the second
involution law for cosets,  the assumption that $\h\yz$ is a
normal subgroup of $\G\wx$, and hence   is closed under inverses
and   commutes with all elements in $\G\wx$, and the final
equation in \refEq{assoc3}. Combine \refEq{assoc4.1} with
\refEq{assoc2} and the first three equations in \refEq{assoc3} to
arrive at
\begin{multline*}\tag{23}\labelp{Eq:assoc4}
\cc x w z\scir\cc x z y =\cc x z w\mo\scir\cc x y z\mo = \cc x y
w\mo\scir\vphi\yx[\cc y z w\scir\h\yx]\mo\\= \cc x w
y\scir\vphi\yx[\cc y w z\scir\h\yx]\per
\end{multline*}
Multiply the first and last expressions in  \refEq{assoc4} on the
left by $\cc x w y\mo$ and on the right by $\cc x z y\mo$, and use
the inverse law for cosets, to  obtain
\begin{equation*}\tag{24}\labelp{Eq:assoc6}
\cc x w y\mo\scir\cc x w z = \vphi\yx[\cc y w z\scir\h\yx]\scir\cc
x z y\mo\per
\end{equation*}  In more detail, the inverse law for cosets, the
assumption that $\cc xzy$ is a coset
of $\h\xz\scir\h\xy$,  and the assumption that the subgroup
$\h\xy$ is normal yield
\begin{multline*}
  \cc xwy\mo\scir\cc xwz\scir\cc xzy\scir\cc xzy\mo=\cc xwy\mo\scir\cc
  xwz\scir\h\xz\scir\h\xy\\=\cc xyw\mo\scir\h\xy\scir\cc xwz\scir \h\xz=\cc
  xyw\mo\scir\cc xwz\per
\end{multline*} The final equality is justified because
$\cc x w z$ is a coset of the normal subgroup
$\h\xw\scir\h\xz$\comma and therefore \textit{absorbs} the factor
$\h\xz$ in the sense that
\[\cc x w z\scir\h\xz=\cc x w z\scir(\h\xw\scir\h\xz)\scir\h\xz=\cc x w
z\scir\h\xw\scir\h\xz=\cc x w z\comma
\]  by the identity law for groups of cosets, the assumption that $\cc xwz$ is
a coset of $\h\xw\scir\h\xz$, and the assumption that $\h\xz$ is a
subgroup of $\G\wx$ and  therefore closed under composition.
Similarly, the coset $\cc x w y\mo$ of $\h\xw\scir\h\xy$ absorbs
the factor $\h\xy$\per An analogous argument shows that the
product of $\cc x z y$ with its inverse is absorbed by the term
$\vphi\yx[\cc y w z\scir\h\yx]$ on the right side of equation
\refEq{assoc6}. This completes the justification of the
computation in \refEq{assoc6}.  Combine  the first and second
equations in \refEq{assoc3} with \refEq{assoc6} to conclude that
\begin{equation*}\tag{25}\labelp{Eq:assoc5}
\cc x y w\scir\cc x w z = \vphi\yx[\cc y w z\scir\h\yx]\scir\cc x
y z\per
\end{equation*} This is just the desired
permuted version of \refEq{assoc1} in which the indices $z$ and
$w$ have been transposed.

The next goal is to derive a permuted version of \refEq{assoc1} in
which the indices $y$ and $w$ have been transposed. Begin with an
application of \refL{sfimage}(iii) (with  $\ww$  and  $\wy$ in place
of $\wy$  and  $\wz$ respectively, and with   $\cc y w z\scir\h\yx$
in place of $Q$) to obtain
\begin{equation*}\tag{26}\label{Eq:assoc8.1}
\cc x w y\scir\vphi\xy\mo[\cc y w
z\scir\h\yz]=\vphi\xw\mo[\vphi\wwy\mo[\cc y w
z\scir\h\yx]]\scir\cc x w y\per
\end{equation*}
Notice in this connection that  $\cc y w z$ is a coset of
$\h\yw\scir\h\yz$\comma so the product $\cc y w z\scir\h\yx$ is a
coset of $\h\yw\scir\h\yz\scir\h\yx$\comma and therefore  a union of
cosets of $\h\yw\scir\h\yx$\per This latter group coincides with
$\k\xy\scir\k\wwy$\comma by semi-frame condition (ii) and
\refCo{con11}, so the hypotheses of \refL{sfimage}(iii) are indeed
satisfied. Use semi-frame condition (ii)  to rewrite
\refEq{assoc8.1} as
\begin{equation*}\tag{27}\labelp{Eq:assoc8}
\cc x w y\scir\vphi\yx[\cc y w z\scir\h\yz]=\vphi\wwx[\vphi\yw[\cc
y w z\scir\h\yx]]\scir\cc x w y\per
\end{equation*}
The argument of $\vphi\wwx$ on the right  side of \refEq{assoc8}
may be rewritten as
\begin{align*}\tag{28}\labelp{Eq:assoc9}
\vphi\yw[\cc y w z\scir\h\yx]&=\vphi\yw[\cc y w
z\scir\h\yw\scir\h\yx]\\ &=\vphi\yw[\cc y w
z]\scir\vphi\yw[\h\yw\scir\h\yx]\per
\end{align*}
The first equality uses the fact that $\cc y w z$ is a coset of
$\h\yw\scir\h\yz$ and therefore absorbs $\h\yw$, and the second
uses the isomorphism properties of $\vphi\yw$\per The function
$\vphi\yw$ maps the group $\k\xy\scir\h\yw$ to the group
$\k\xw\scir\k\yw$, by the second equation in condition (iii) of
the theorem (with $w$ in place of $z$), which has been shown to
hold for all triples in $\ez 3$. The first of these groups
coincides with $\h\yx\scir\h\yw$, and the second with
$\h\wwx\scir\h\wwy$\comma by semi-frame condition (ii) and
\refCo{con11}, so (using also the assumption that the subgroups
involved are normal)
\begin{equation*}\tag{29}\labelp{Eq:assoc10}
\vphi\yw[\h\yw\scir\h\yx]=\h\wwy\scir\h\wwx\per
\end{equation*}
Also, parts (ii) and (iv) of \refL{cysim} hold for all triples in
$\ez 3$, because $\mc F$ satisfies the coset conditions for the
second involution law and the cycle law.  Apply part (v) of the
lemma (with $y$ and $w$ in place of $x$ and $y$ respectively) to
obtain
\begin{equation*}\tag{30}\labelp{Eq:assoc11}
\vphi\yw[\cc y w z]=\cc w z y\per
\end{equation*}  Combine
\refEq{assoc9}--\refEq{assoc11}, and use the fact that the coset
$\cc wzy$ of $\h\wwz\scir\h\wwy$ absorbs the subgroup $\h\wwy$,
to arrive at
\begin{multline*}\tag{31}\labelp{Eq:assoc12}
\vphi\yw[\cc y w z\scir\h\yx]=\vphi\yw[\cc y w
z]\scir\vphi\yw[\h\yw\scir\h\yx]\\=\cc w z
y\scir\h\wwy\scir\h\wwx=\cc w z y\scir\h\wwx\per
\end{multline*}
Replace the occurrence in \refEq{assoc8} of the left side of
\refEq{assoc12} with the right side of \refEq{assoc12} to get
\begin{equation*}%\tag{}\labelp{Eq:}
\cc x w y\scir\vphi\yx[\cc y w z\scir\h\yz]=\vphi\wwx[\cc w z
y\scir\h\wwx]\scir\cc x w y\per
\end{equation*} Combine this with \refEq{assoc4} to conclude that
\begin{equation*}\tag{32}\labelp{Eq:assoc7}
\cc x w z\scir\cc x z y = \vphi\wwx[\cc w z y\scir\h\wwx]\scir\cc
x w y\comma
\end{equation*} which is the permuted version of \refEq{assoc1}
in which the indices $\wy$ and $\ww$ have been transposed.

Finally, we derive a permuted version of \refEq{assoc1} in which
the indices $x$ and $y$ have been transposed. Apply $\vphi\xy$ to
both sides of \refEq{assoc1} to obtain
\begin{equation*}\tag{33}\labelp{Eq:assoc14}
\vphi\xy[\cc x y z\scir\cc x z w] = \vphi\xy[\vphi\yx[\cc y z
w\scir\h\yx]\scir\cc x y w]\per
\end{equation*} The  left side of \refEq{assoc14} may
be   rewritten as
\begin{multline*}\tag{34}\labelp{Eq:assoc35.1}
\vphi\xy[\cc x y z\scir\cc x z w] =\vphi\xy[\cc x y
z\scir\h\xy\scir\cc x z w]=\vphi\xy[\cc x y z\scir\cc x z
w\scir\h\xy]\\=\vphi\xy[\cc x y z]\scir\vphi\xy[\cc x z
w\scir\h\xy]= \cc  y z x\scir\vphi\xy[\cc x z w\scir\h\xy]\per
\end{multline*}
The first equality uses the fact that the coset $\cc x y z$ of
$\h\xy\scir\h\xz$ absorbs the subgroup $\h\xy$\comma the second uses
the assumption that $\h\xy$ is normal, the third uses the
isomorphism properties of $\vphi\xy$ (which is why it is necessary
to insert a copy of $\h\xy$ to compose with $\cc xzw$) \comma and
the fourth uses \refL{cysim}(v). The right side of \refEq{assoc14}
may be rewritten as
\begin{align*}\tag{35}\labelp{Eq:assoc36.1}
\vphi\xy[\vphi\yx[\cc y z w\scir\h\yx]\scir\cc x y w]&=
\vphi\xy[\vphi\yx[\cc y z w\scir\h\yx]]\scir\vphi\xy[\cc x y w]\\
&= \cc y z w\scir\h\yx\scir\vphi\xy[\cc x y w]\\ &= \cc y z
w\scir\h\yx\scir\cc y w x\\ &= \cc y z w\scir\cc y w x\comma
\end{align*}
by isomorphism properties of $\vphi\xy$\comma semi-frame condition
(ii),  \refL{cysim}(v) (with $w$ in place of $z$), and the  fact
that the coset $\cc y w x$ absorbs the group $\h\yx$\per Combine
\refEq{assoc14}---\refEq{assoc36.1} to arrive at
\begin{equation*}%\tag{}\labelp{Eq:}
\cc y z w\scir\cc y w x=\cc  y z x\scir\vphi\xy[\cc x z
w\scir\h\xy]\per
\end{equation*}
Multiply both sides of the preceding equation by $\cc y z x\mo$ on
the left and by $\cc y w x\mo$ on the right, and use the inverse
law for groups of cosets, to obtain
\begin{equation*}\tag{36}\labelp{Eq:assoc15}
\cc y z x\mo\scir\cc y z w=\vphi\xy[\cc x z w\scir\h\xy]\scir\cc y
w x\mo\per
\end{equation*}
From \refL{cysim}(i), it follows that
\begin{equation*}\tag{37}\labelp{Eq:assoc38.1}
\cc y z x\mo=\cc y x z\qquad\text{and}\qquad\cc y w x\mo=\cc y x
w\per
\end{equation*} Combine \refEq{assoc15} and \refEq{assoc38.1} to conclude that
\begin{equation*}\tag{38}\labelp{Eq:assoc13}
\cc y x z\scir\cc y z w=\vphi\xy[\cc x z w\scir\h\xy]\scir\cc y x
w\comma
\end{equation*} which is the desired permuted version of \refEq{assoc1} obtained
by transposing the indices $x$ and $y$.

It has been shown that the three permuted versions of \refEq{assoc1}
obtained by transposing the indices $z$ and $w$, the indices $y$ and
$w$, and the indices $x$ and $y$, are all derivable from
\refEq{assoc1}. These three transpositions generate all permutations
of the quadruple $\quadr xyzw$, so it follows that every version of
\refEq{assoc1} in which the indices $x$, $y$, $z$, and $w$ have been
permuted is derivable from \refEq{assoc1}.

The next step is to derive all instances of the coset condition for
the associative law on the basis of condition (ix) of the theorem
and the assumption that $\mc F$ is a semi-frame satisfying
conditions (i)--(viii) of the theorem, or equivalently, satisfying
the first three coset conditions. Suppose that the first two indices
of an arbitrary quadruple in $\ez 4$, say $\quadr xyzw$, are equal,
with the goal of deriving \refEq{assoc1}. This derivation does not
require the use of condition (ix) at all. Observe that
\begin{equation*}\tag{39}\label{Eq:assoc16.1}
  \cc xyz=\cc xxz=\h\xz\qquad\text{and }\qquad\cc xyw=\cc
  xxw=\h\xw\comma
\end{equation*}
by the assumption on $x$ and $y$, and condition (v) of the
theorem.  Also, $\vphi\yx$ and $\h\yx$ coincide  with $\vphi\xx$
and $\{e_x\}$ respectively, and $\vphi\xx$ is the identity
function on $\G\wx/\{e_x\}$, by condition (i) of the theorem, so
\begin{equation*}\tag{40}\label{Eq:assoc17.1}
  \vphi\yx[\cc yzw\scir\h\yx]=\vphi\xx[\cc xzw\scir\{e_x\}]=\cc
  xzw\scir\{e_x\}=\cc xzw\per
\end{equation*} Consequently,
\begin{equation*}\tag{41}\label{Eq:assoc16}
  \cc xyz\scir\cc xzw=\h\xz\scir\cc xzw=\cc xzw\comma
\end{equation*}
by the first part of \refEq{assoc16.1} and the fact that the coset
$\cc xzw$ absorbs the subgroup $\h\xz$.  Therefore,
\begin{equation*}\tag{42}\label{Eq:assoc17}
  \vphi\yx[\cc yzw\scir\h\yz]\scir\cc xyw=\cc xzw\scir\cc xyw= \cc xzw\scir\h\xw=\cc
  xzw\comma
\end{equation*}
by \refEq{assoc17.1}, the second part of \refEq{assoc16.1}, and
the fact that the coset $\cc xzw$ absorbs the subgroup $\h\xw$.
Combine \refEq{assoc16} and \refEq{assoc17} to arrive at
\refEq{assoc1}.

Consider next the case of an arbitrary quadruple $\quadr xyzw$ in
$\ez 4$ in which at least two of the indices are equal. Form a
permutation of this quadruple in which two of the equal indices
are moved to the first and second positions of the quadruple.  The
resulting quadruple   satisfies the hypotheses of the preceding
paragraph, so the version of \refEq{assoc1} that is associated
with this quadruple is valid in $\mc F$, by the observations of
the previous paragraph. It follows that \refEq{assoc1}  must hold
for the given quadruple $\quadr xyzw$, since every permuted
version of a valid instance of the coset condition for the
associative law is also valid.

Turn finally to the case when the indices in a quadruple $\quadr
xyzw$ in $\ez 4$ are distinct.  If $x<y<z<w$, then \refEq{assoc1}
holds by the assumed condition (ix). Consequently, every permuted
version of \refEq{assoc1} also holds, so \refEq{assoc1} is valid
in $\mc F$ in all cases in which the indices of the given
quadruple are mutually distinct. Combine the observations of this
and the preceding paragraph  to conclude that if condition (ix) of
the theorem is true in a semi-frame $\mc F$ satisfying conditions
(i)--(viii), then the coset condition for the associative law
holds in $\mc F$.  The reverse implication is trivially true.
\end{proof}

The following special case of the second part of \refT{simcos} is
quite useful in verifying the coset condition for the associative
law in basic examples of semi-frames.

\begin{cor}\labelp{C:tripsub} Suppose  $\mc F$ is a  semi-frame satisfying the coset
conditions  for the identity law\co the second involution law\co
and the  cycle law\po  If
\[\h\xy\scir\h\xz\scir\h\xw=\G\wx\]
 for all quadruples $\quadr x  y z w$   in $\ez 4$\comma then $\mc F$
satisfies the coset conditions for the associative law\po
\end{cor}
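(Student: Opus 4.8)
The plan is to observe that the size hypothesis forces \emph{both} sides of the coset condition for the associative law to collapse to the whole group $\G\wx$, so that the equation holds for trivial reasons. Recall that the condition to be verified for a quadruple $\quadr xyzw$ in $\ez 4$ is coset condition (iv) of \refD{costra}, namely $\cc xyz\scir\cc xzw = \vphi\yx[\cc yzw\scir\h\yx]\scir\cc xyw$.

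First I would fix a quadruple $\quadr xyzw$ in $\ez 4$ and sort out the index bookkeeping. Since $\quadr xyzw\in\ez 4$, the pairs $\pair xy$, $\pair xz$, $\pair xw$ all lie in $\mc E$; because $\mc E$ is an equivalence relation, the pairs $\pair yx$, $\pair yz$, $\pair yw$ lie in $\mc E$ as well, so $\quadr yxzw$ is likewise a quadruple in $\ez 4$. Applying the hypothesis of the corollary to these two quadruples then gives $\h\xy\scir\h\xz\scir\h\xw=\G\wx$ and $\h\yx\scir\h\yz\scir\h\yw=\G\wy$.

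Next I would evaluate the two sides separately. For the left side: by the definition of a group triple, $\cc xyz$ is a coset of the normal subgroup $\h\xy\scir\h\xz$ of $\G\wx$ and $\cc xzw$ is a coset of the normal subgroup $\h\xz\scir\h\xw$ of $\G\wx$, so their complex product $\cc xyz\scir\cc xzw$ is a coset of $(\h\xy\scir\h\xz)\scir(\h\xz\scir\h\xw)=\h\xy\scir\h\xz\scir\h\xw=\G\wx$; but the only coset of $\G\wx$ in $\G\wx$ is $\G\wx$ itself. For the right side: $\cc yzw$ is a coset of $\h\yz\scir\h\yw$ in $\G\wy$, hence $\cc yzw\scir\h\yx$ is a coset of $\h\yx\scir\h\yz\scir\h\yw=\G\wy$, and therefore $\cc yzw\scir\h\yx=\G\wy$; since $\vphi\yx$ maps the cosets of $\h\yx$ bijectively onto the cosets of $\k\yx=\h\xy$ (the equality $\k\yx=\h\xy$ holding by \refCo{con11}), it carries the union of all cosets of $\h\yx$ to the union of all cosets of $\h\xy$, so $\vphi\yx[\cc yzw\scir\h\yx]=\vphi\yx[\G\wy]=\G\wx$, whence $\vphi\yx[\cc yzw\scir\h\yx]\scir\cc xyw=\G\wx\scir\cc xyw=\G\wx$ because $\cc xyw$ is a nonempty subset of the group $\G\wx$. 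Both sides thus equal $\G\wx$, so the equation holds; as $\quadr xyzw$ was an arbitrary quadruple in $\ez 4$, it follows that $\mc F$ satisfies the coset condition for the associative law.

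There is no serious obstacle here; the only steps needing a little care are verifying that $\quadr yxzw$ really belongs to $\ez 4$ so the hypothesis may be invoked for it, and checking that $\vphi\yx$ may legitimately be applied to $\cc yzw\scir\h\yx$ — the point being that although $\cc yzw$ need not be a union of cosets of $\h\yx$, the product $\cc yzw\scir\h\yx$ is one. It is worth noting that the coset conditions for the identity, second involution, and cycle laws are not actually used in this derivation; they are assumed in the corollary only because it is meant to be applied once those three conditions have already been established, for example by means of \refT{simcos}.
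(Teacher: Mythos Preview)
Your proof is correct and follows essentially the same approach as the paper's own argument: both sides of the associative-law coset condition are shown to collapse to $\G\wx$ because each is a coset of the triple product $\h\xy\scir\h\xz\scir\h\xw$ (or its permuted analogue in $\G\wy$). Your explicit remark that $\quadr yxzw\in\ez 4$ is needed before invoking the hypothesis for the $\G\wy$-side is a point the paper glosses over with ``by assumption,'' and your closing observation that the first three coset conditions are not actually used in the derivation is accurate.
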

\begin{proof}Consider a quadruple $\quadr x  y z w$   in $\ez 4$, with the
intention of showing that
\begin{equation*}\tag{1}\labelp{Eq:tripsub1}
\cc x y z\scir\cc x z w = \vphi\yx[\cc y z w\scir\h\yx]\scir\cc x
y w\per
\end{equation*}
Since $\cc x y z$ and $\cc x z w$ are cosets  of $\h\xy\scir\h\xz$
and $\h\xz\scir\h\xw$\comma the complex product
\[\cc x y z\scir\cc x z w
\]
is a coset  of the triple product
\[\h\xy\scir\h\xz\scir\h\xw\comma
\] which is $\G\wx$\comma  by assumption.  There is only one
coset of the improper subgroup $\G\wx$, namely   itself, so
\begin{equation*}\tag{2}\labelp{Eq:tripsub2}
\cc x y z\scir\cc x z w=\G\wx\per
\end{equation*}
As regards the right side  of \refEq{tripsub1}\comma because $\cc
y z w$ is a coset of $\h\yz\scir\h\yw$\comma the product
\[\cc y z w\scir\h\yx
\]
is  a coset of the triple  product
\[\h\yz\scir\h\yw\scir\h\yx\comma
\] which is $\G\wy$\comma by assumption.  Therefore,
\[\cc y z w\scir\h\yx=\G\wy\per
\]
Apply  the mapping $\vphi\yx$ to  both sides of the previous
equation to obtain
\begin{equation*}%\tag{}\labelp{Eq:}
\vphi\yx[\cc y z w\scir\h\yx]=\vphi\yx[\G\wy]=\G\wx\per
\end{equation*} Multiply the first and
last terms  of this equation on the right by $\cc x y w$ to arrive
at
\begin{equation*}\tag{3}\labelp{Eq:tripsub3}
\vphi\yx[\cc y z w\scir\h\yx]\scir\cc x y w=\G\wx\scir\cc x y
w=\G\wx\per
\end{equation*}
Combine  \refEq{tripsub2} and  \refEq{tripsub3} to see that
  \refEq{tripsub1} holds in this case. Apply \refT{simcos} to conclude that
   coset conditions  for the associative law are valid in $\mc F$.
\end{proof}

There are a number of  other special cases in which the
verification of the coset conditions for a given semi-frame
simplify.  For instance, in many of the examples of group triples,
most of cosets $\cc xyz$ in the coset shifting system are the
identity coset in the sense that they are the identity element of
the corresponding quotient group,
\[\cc xyz=\h\xy\scir\h\xz\per\] The next corollary is perhaps the simplest  example
of such a special case.     Call two cosets $\cc xyz$ and $\cc uvw$
\textit{associated} if $\trip uvw$ is a permutation of $\trip xyz$.

\begin{cor}\labelp{C:trithm}Let
$\mc F$ be a semi-frame\comma and $\trip p q r$  a
 triple in $\ez 3$ with $p<q<r$\per  If every coset   not associated
 with $\cc pqr$ is the identity coset\comma then $\mc F$   satisfies  the four coset conditions
 if and only if the following conditions hold\per
 \begin{enumerate}
 \item[(i)]$\cc p q r\mo=\cc p r q$\comma and $\cc q r p\mo=\cc q p r$\comma  and $\cc
r p q\mo=\cc r q p$\per
\item[(ii)]$\vphi\pq[\cc p q r]=\cc q r p$\per
\item[(iii)]$\vphi\pr[\cc p q r]=\cc r p q$\per
\item[(iv)] $\cc p q r\seq\tbigcap\{\h\pq\scir\h\pr\scir\h{ps}:\pair ps\in\mc E
\text{ and } s\neq  p, q, r\}$\per
\end{enumerate}
\end{cor}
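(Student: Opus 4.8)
The plan is to reduce everything to \refT{simcos}.  Since $\mc F$ is assumed to be a semi-frame, the semi-frame conditions together with \refCo{con11} immediately give conditions (i)--(iv) of \refT{simcos}.  Hence, by the first part of that theorem, $\mc F$ satisfies the first three coset conditions if and only if conditions (v)--(viii) of \refT{simcos} hold, and then, by the second part, it additionally satisfies the fourth coset condition if and only if condition (ix) holds.  Thus it suffices to prove, under the standing hypothesis that every coset $\cc u v w$ whose index triple is not a permutation of $\trip p q r$ equals the identity coset $\h{uv}\scir\h{uw}$, that conditions (v)--(viii) of \refT{simcos} are equivalent to conditions (i)--(iii) of the corollary, and that condition (ix) of \refT{simcos} is equivalent to condition (iv) of the corollary.

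Consider conditions (v)--(viii) of \refT{simcos}.  A coset $\cc x y z$ whose index triple has a repeated entry, or is not a permutation of $\trip p q r$, equals $\h{xy}\scir\h{xz}$ by hypothesis.  Using this, semi-frame condition (iii), and \refCo{con11} (which identifies $\k{xy}$ with $\h{yx}$), one checks by routine coset computations that every instance of \refT{simcos}(v) holds --- each coset appearing there, such as $\cc x x y$, reduces to $\h{xy}$ --- and that every instance of \refT{simcos}(vi), (vii), or (viii) whose index triple is not a permutation of $\trip p q r$ also holds, both sides reducing to the same product of subgroups.  The remaining instances are those whose index triple is a permutation of $\trip p q r$: after forming coset inverses, the six instances of \refT{simcos}(vi) collapse to the three equations in corollary condition (i); the unique ascending permutation $\trip p q r$ turns \refT{simcos}(vii) into $\vphi{pq}[\cc p q r]=\cc q r p$, which is corollary condition (ii); and it turns \refT{simcos}(viii) into $\vphi{pr}[\cc p q r]=\cc r p q$, which is corollary condition (iii).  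So \refT{simcos}(v)--(viii) hold precisely when corollary conditions (i)--(iii) hold.

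The substance of the proof is the equivalence of \refT{simcos}(ix) with corollary condition (iv).  Fix a quadruple $\quadr x y z w$ in $\ez 4$ with $x<y<z<w$.  Its four index triples $\trip x y z$, $\trip x z w$, $\trip y z w$, $\trip x y w$ are exactly the four three-element subsets of $\{x,y,z,w\}$ (each written in ascending order), so at most one of them is a permutation of $\trip p q r$.  If none is, then $\cc x y z$, $\cc x z w$, $\cc y z w$, and $\cc x y w$ are all identity cosets, and a short computation --- using semi-frame condition (iii), \refCo{con11}, and the fact that $\vphi{yx}$ is an isomorphism on unions of cosets of $\h{yx}$ --- shows that both sides of the equation in \refT{simcos}(ix) equal $\h{xy}\scir\h{xz}\scir\h{xw}$, so (ix) holds for this quadruple.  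If exactly one index triple is a permutation of $\trip p q r$, then the fourth index $s$ (the unique element of $\{x,y,z,w\}$ not among $p,q,r$) lies in the $\mc E$-class of $p$ and differs from $p,q,r$; according to which of the four positions the permuting triple occupies, $s$ satisfies one of $s<p$, $p<s<q$, $q<s<r$, $r<s$, and all four ranges occur, one per position.  In each of these positional sub-cases, after absorbing the identity cosets and applying the relevant quotient isomorphism, the equation in \refT{simcos}(ix) reduces to the single containment $\cc p q r\seq\h{pq}\scir\h{pr}\scir\h{ps}$.  Conversely, each $s$ in the $\mc E$-class of $p$ with $s\neq p,q,r$ occurs as the fourth index of exactly one such quadruple.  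Hence \refT{simcos}(ix) holds for all quadruples in $\ez 4$ if and only if $\cc p q r\seq\h{pq}\scir\h{pr}\scir\h{ps}$ for every such $s$, which is corollary condition (iv).  Combining the two equivalences with \refT{simcos} proves the corollary.

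I expect the main obstacle to be the positional sub-case in which the permuting triple sits in the position $\trip y z w$ of a quadruple $\quadr x p q r$, so that $\cc p q r$ appears inside the term $\vphi{px}[\cc p q r\scir\h{px}]$ on the right side of \refT{simcos}(ix).  One must argue there that $\vphi{px}$ carries the coset $\cc p q r\scir\h{px}$ of the subgroup $\h{px}\scir\h{pq}\scir\h{pr}$ onto a coset of $\h{xp}\scir\h{xq}\scir\h{xr}$ --- computing the image of this subgroup via semi-frame condition (iii) and \refCo{con11} --- and then observe that this image is the identity coset if and only if $\cc p q r\scir\h{px}$ is the identity coset of $\h{px}\scir\h{pq}\scir\h{pr}$, i.e.\ if and only if $\cc p q r\seq\h{pq}\scir\h{pr}\scir\h{px}$.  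The other positional sub-cases and the case when no index triple permutes $\trip p q r$ are straightforward coset bookkeeping using semi-frame condition (iii) and \refCo{con11}.
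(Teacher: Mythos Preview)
Your proof is correct and follows the same overall strategy as the paper: reduce to the conditions of \refT{simcos}, observe that the semi-frame hypothesis dispatches (i)--(iv), that the identity-coset hypothesis dispatches all instances of (v)--(viii) except those indexed by permutations of $\trip pqr$, and that the surviving instances are exactly corollary conditions (i)--(iii).

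The one genuine difference is in the treatment of associativity. You work directly with condition (ix) of \refT{simcos}, which forces $x<y<z<w$, and therefore split into four positional sub-cases according to where the extra index $s$ falls relative to $p<q<r$; in each sub-case you simplify to the inclusion $\cc pqr\seq\h\pq\scir\h\pr\scir\h{ps}$. The paper instead exploits the permutation invariance of the associative coset equation (established inside the proof of \refT{simcos}) to arrange every quadruple so that the extra index occupies the \emph{fourth} slot $w$; this collapses your four sub-cases to a single one, which reduces immediately to the same inclusion. Your route is more laborious but entirely self-contained once \refT{simcos} is in hand; the paper's shortcut is shorter but leans on a fact proved only in passing during the proof of \refT{simcos}. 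Your handling of the delicate sub-case $\quadr spqr$ with $s<p$---where $\cc pqr$ sits inside $\vphi{ps}[\cc pqr\scir\h{ps}]$ and one must compute the image subgroup via semi-frame condition (iii) and \refCo{con11}---is correct.
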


\begin{proof} Assume the conditions of the corollary, with the goal of verifying the conditions of
\refT{simcos}. The assumption that $\mc F$ is a semi-frame implies
that conditions (i)---(iv) of \refT{simcos} are satisfied. Also,
condition (v) of the theorem holds.  To see this, consider an
arbitrary pair $\pair x y$   in $\mc E$.  The  cosets
\[\cc x x y,\qquad\cc x y y,\qquad\cc x y x\comma\]
are  identity cosets, by assumption, so
\[\cc x x y
=\h\xx\scir\h\xy=\{e_x\}\scir\h\xy=\h\xy=\h\xy\scir\{e_x\}=\h\xy\scir\h\xx=\cc x y x \]
and
\[\cc x y
y=\h\xy\scir\h\xy=\h\xy\per
\]
The second and fifth equalities use   semi-frame condition (i).

To verify that condition (vi)  of the theorem is equivalent to condition (i) of the
corollary (under the basic assumption of the corollary), let
$\trip x y z$ be a triple in
$\ez 3$ of pairwise distinct elements.  If
$\trip x y z$ is not associated with $\trip pqr$, then
\begin{equation*}%\tag{1}\labelp{Eq:trithm1}
\cc x y z\mo=(\hh)\mo=\h\xz\mo\scir\h\xy\mo=\h\xz\scir\h\xy=\cc x z y\per
\end{equation*}
The first and last equality use the basic assumption of the
corollary, the second uses the second involution law for group
complexes, and the third uses the fact that $\h\xy$ and $\h\xz$ are
subgroups, and hence closed under the operation of forming inverses.
If $\trip x y z $ is an associate of $\trip pqr$, then condition
(vi) of the theorem holds by  condition (i) of the corollary, and
vice versa.

The next step is to check that conditions (vii) and (viii) of the
theorem are respectively equivalent to conditions (ii) and (iii) of
the corollary. Let $\trip x y z$ be a triple in $\ez 3$ with
$x<y<z$. If this triple is not $\trip pqr$, then it cannot be an
associate of $\trip pqr$, because of the ordering, and therefore
\[\vphi\xy[\cc x y z]=\vphi\xy[\hh]=\kh=\h\yx\scir\h\yz=\h\yz\scir\h\yx=\cc
y z x\per
\]
The first and fifth equalities hold by the basic assumption of the corollary, the second by
semi-frame condition (iii), the third by semi-frame condition  (ii) (and semi-frame
condition (i) in the case when $x=y$), and the
fourth by the fact that the subgroups are normal and hence commute with one another.  A completely analogous argument shows
that
\[\vphi\xz[\cc x y z]=\cc z x y\per
\] Thus, in  this case, conditions (vii) and (viii) of the theorem hold.
If the triple $\trip x y z$ is $\trip pqr$, then conditions (vii) and (viii) of the
theorem are exactly conditions (ii) and (iii) of the corollary.

The associative law coset  conditions will hold for  all permutations  of  a
quadruple $(x, y, z, w)$ just in case
\begin{equation*}\tag{1}\labelp{Eq:onetri1}
\cc x y z\scir\cc x z w=\vphi\yx[\cc y z w\scir\h\yx]\scir\cc x y w\comma
\end{equation*}
by Associative Law \refT{alawc}.  By assumption,
\begin{equation*}
\cc x y w=\h\xy\scir\h\xw,\qquad\cc x z w=\h\xz\scir\h\xw,\qquad
\cc y z w=\h\yz\scir\h\yw
\end{equation*} (under the hypothesis that $w$ is different from $p$, $q$, and $r$)\comma
so equation \refEq{onetri1} can equivalently be rewritten as
\begin{equation*}\tag{2}\labelp{Eq:onetri2}
\cc x y
z\scir\h\xz\scir\h\xw=\vphi\yx[\h\yz\scir\h\yw\scir\h\yx]\scir\h\xy\scir\h\xw\per
\end{equation*}
It is a consequence of semi-frame condition (iii) that
\begin{equation*}
\vphi\yx[\h\yz\scir\h\yw\scir\h\yx]=\h\xy\scir\h\xz\scir\h\xw\comma
\end{equation*}
so the  right-hand side of \refEq{onetri2} reduces to
$\h\xy\scir\h\xz\scir\h\xw$\per
On the other hand,
\[\cc x y z\scir\h\xz=\cc x y z\comma
\]
since $\cc x y z$ is a coset of $\h\xy\scir\h\xz$, so the left-hand
side of \refEq{onetri2} reduces to $\cc x y z\scir\h\xw$\per Thus,
\refEq{onetri2}  is equivalent to
\begin{equation*}\tag{3}\labelp{Eq:onetri3}
\cc x y z\scir\h\xw=\h\xy\scir\h\xz\scir\h\xw\per
\end{equation*}
Finally, since $\cc x y z$ is a coset of $\h\xy\scir\h\xz$\comma
equation \refEq{onetri3} will hold just  in case $\cc x  y z$ is a subset
of  $\h\xy\scir\h\xz\scir\h\xw$\per

If $\trip xyz$ is not an associate of  $\trip pqr$, then $\cc xyz$
is the identity coset $\h\xy\scir\h\xz$, and so the desired
inclusion is trivial.  If $\trip xyz$ is an associate of $\trip
pqr$, then of course $\trip pqr$ is an associate of $\trip xyz$, and
for $\trip pqr$, the desired inclusion holds by condition (iv) of
the corollary.  This means that condition \refEq{onetri1} holds for
$\trip pqr$, and hence also for the original triple $\trip xyz$,
since the validity of \refEq{onetri1} for one triple implies its
validity for all associates of the triple.

The remaining parts of the proof are trivial and are left to the reader.
\end{proof}

The final observation we wish to make is that in a coset relation
algebra     $\cra CF$,  the operation $\,\otimes\,$ reduces to
relational composition in all those cases in which the indices $x$,
$y$, and $z$ of the coset $\cc xyz$ used to define the relative
product $\r\xy\a\otimes\r\yz\b$ are not mutually distinct.

\begin{cor}\labelp{C:otimesrp} If $\mc F$ is a group triple
satisfying conditions \textnormal{(i)--(viii)} of Theorem
\textnormal{\ref{T:simcos}}\comma then
\[\r\xy\lph\otimes\r\yz\bt=\r\xy\lph\rp\r\yz\bt\]
for every triple $\trip x y z$  in $\ez 3$ in which at least two
of the indices $x$\co $y$\co  $z$ are equal\per
\end{cor}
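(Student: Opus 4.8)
The plan is to reduce the assertion, by means of \refL{otimes.1}, to a purely group-theoretic identity about the shifting cosets, and then to read that identity off from condition (v) of \refT{simcos}. By the first part of \refT{simcos}, a group triple satisfying conditions (i)--(viii) is a coset semi-frame, hence in particular a semi-frame, so \refL{otimes.1} applies: for a triple $\trip x y z$ in $\ez 3$, the equality $\r\xy\lph\otimes\r\yz\bt=\r\xy\lph\rp\r\yz\bt$ holds for all $\lph<\kai\xy$ and all $\bt<\kai\yz$ precisely when $\cc x y z=\h\xy\scir\h\xz$. Thus it is enough to verify this coset equality whenever two of the indices $x$, $y$, $z$ coincide.

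I would first note the degenerate identity $\h\xx=\{\e x\}$, which is forced by \refT{simcos}(i): since $\vphi\xx$ is an automorphism of $\G x/\{\e x\}$, its domain quotient group is $\G x/\{\e x\}$, so the normal subgroup defining that quotient must be $\{\e x\}$. Now I would run through the three cases. If $x=y$, then $\cc x y z=\cc x x z=\h\xz$ by \refT{simcos}(v) (with $z$ in the role of $y$), while $\h\xy\scir\h\xz=\h\xx\scir\h\xz=\{\e x\}\scir\h\xz=\h\xz$, so the two sides coincide. If $y=z$, then $\cc x y z=\cc x y y=\h\xy$ by \refT{simcos}(v), while $\h\xy\scir\h\xz=\h\xy\scir\h\xy=\h\xy$ because $\h\xy$ is a subgroup of $\G x$ and hence closed under composition. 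If $x=z$, then $\cc x y z=\cc x y x=\h\xy$ by \refT{simcos}(v), while $\h\xy\scir\h\xz=\h\xy\scir\h\xx=\h\xy\scir\{\e x\}=\h\xy$. In every case $\cc x y z=\h\xy\scir\h\xz$.

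Having established the coset equality, I would invoke the equivalence of conditions (ii) and (iii) of \refL{otimes.1} to conclude that $\r\xy\lph\otimes\r\yz\bt=\r\xy\lph\rp\r\yz\bt$ for all $\lph<\kai\xy$ and $\bt<\kai\yz$, which is exactly the statement of the corollary. I do not anticipate any genuine obstacle: the argument is essentially bookkeeping, and the only points requiring care are that \refL{otimes.1} is legitimately available (which \refT{simcos} guarantees, by producing a semi-frame) and that in each of the three subcases one picks the matching clause of \refT{simcos}(v) and applies the trivial subgroup identities $\h\xx=\{\e x\}$ and $\h\xy\scir\h\xy=\h\xy$.
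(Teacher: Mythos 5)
Your proof is correct and follows essentially the same route as the paper: reduce via \refL{otimes.1} to the coset identity $\cc xyz=\h\xy\scir\h\xz$, and verify that identity in the coincidence cases using conditions (i) and (v) of \refT{simcos} (the paper simply refers to the analogous computation, equation \refEq{simcos2.1}, in the proof of \refT{simcos} rather than writing out the three cases as you do). Your explicit remark that the first part of \refT{simcos} makes $\mc F$ a semi-frame, so that \refL{otimes.1} is legitimately applicable, is exactly the point the paper leaves implicit.
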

\begin{proof} According to \refL{otimes.1},
\[\r\xy\lph\otimes\r\yz\bt=\r\xy\lph\rp\r\yz\bt
\]
if and only if
\begin{equation*}\tag{1}\label{Eq:trithm.1}
\cc x y z=\hh\per
\end{equation*}
The verification that \refEq{trithm.1} follows from conditions
(i)--(viii) of \refT{simcos}  is nearly identical to the argument
establishing \refEq{simcos2.1} in the proof of \refT{simcos}.  The
details are left to the reader.
\end{proof}

\section{Example}\labelp{S:sec5}

In this section, and example of  a coset relation algebra that is
not representable is constructed. Start with a group pair
\[\mc F=\pair G \vp=\pair{\langle\grp x:x\in I\,\rangle}{\langle\ho x
y:\pair x y\in I\times I\,\rangle}\]   in which the index set $I$
has five elements, say
\[I=\{p,q,r,s,t\}\per
\]  Each of the groups
$\grp x$ is assumed to be a copy of the Cartesian product
${\mathbb Z}_2\times {\mathbb Z}_2\times {\mathbb Z}_2$\comma
where ${\mathbb Z}_2=\{0,1\}$ denotes the cyclic group of order
two, and these copies are assumed to be mutually disjoint.
\begin{figure}[tbh]
%\centerpicture 3.58in by 3.60in (figure5 scaled 600)
\includegraphics[scale=.60]{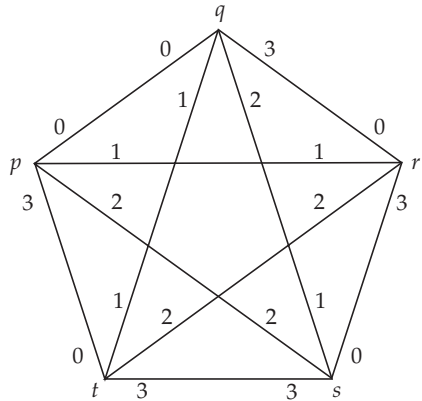}
\caption{Normal subgroup diagram.}\labelp{F:fig5}
\end{figure}
To describe the subgroups $\hll x y$ and $\hr x y$ for distinct
indices $x$ and $y$ in $I$, consider the following four subgroups
of ${\mathbb Z}_2\times {\mathbb Z}_2\times {\mathbb Z}_2$\,:
\begin{alignat*}{2} &\lz 0 =
{\mathbb Z}_2\times\{\ze\}\times\{\ze\}&&,\qquad \lz 1 =
\{\ze\}\times {\mathbb Z}_2\times\{\ze\}\comma\\ &\lz 2 = \{\ze\}
\times\{\ze\}\times {\mathbb Z}_2&&,\qquad \lz 3 =
\{(\ze,\ze,\ze), (\on,\on,\on)\}\per
\end{alignat*} Take $\hll x y$\comma respectively $\hr x y$\comma to
be the copy of one of these four subgroups in $\grp x$\comma
respectively $\grp y$\comma according to the prescriptions given
in \refF{fig5}. For example, the subgroup $\hll p {\myspace t}$ is
the copy of $\lz 3$ in $\grp p$ and the subgroup $\hr p {\myspace
t}$ is the copy of $\lz 0$ in $\grp t$\comma because the edge
between the vertices $p$ and $t$ in the diagram is labeled with
$3$ and $0$. Similarly, the subgroup $\hll q s$ is the copy of
$\lz 2 $ in $\grp q$ and the subgroup $\hr q s $ is the copy of
$\lz 1$ in $\grp s$\comma because the edge from $q$ to $s$ is
labeled with $2$ and $1$.

The quotient isomorphisms $\ho x y$ when $x$ and $y$ are equal are
of course taken to be the appropriate identity automorphisms of
$\G x/\{e_x\}$ for every $x$ in $I$.   For distinct $x$ and $y$,
they are completely determined by the requirement that
$\hvphs\xy\relprodd\hvphs\yz=\hvphs\xz$\per For instance, according to
the diagram in \refF{fig5}, we must have
\[\ho p q[\lz 0\scir\lz 3]=\lz 0\scir\lz 1,\qquad
\ho p q[\lz 0\scir\lz 1]=\lz 0\scir\lz 3,\qquad \ho p q[\lz
0\scir\lz 2]=\lz 0\scir\lz 2\]
\begin{figure}[tbh]
\includegraphics[scale=.60]{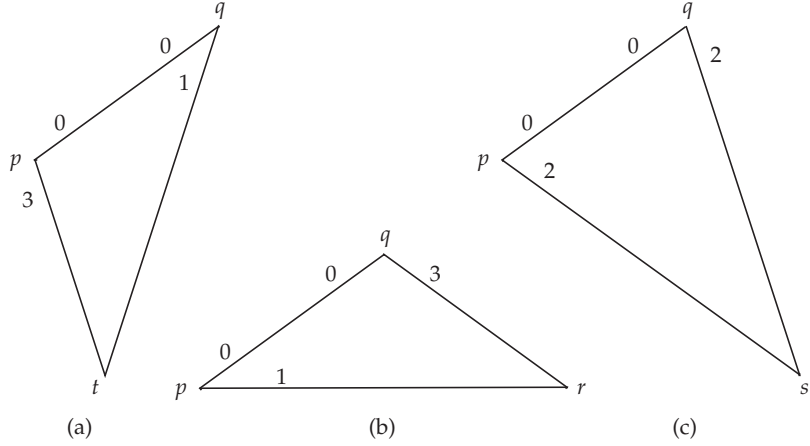}
 \caption{The triangles from the pentagon that determine
$\ho p q$\per}\labelp{F:fig6}
\end{figure}(see (a), (b), and (c) respectively in  \refF{fig6}). (The composite
subgroups  on the left, inside the brackets, should actually be
interpreted as denoting their copies in $\grp p$\comma and the
composite subgroups on the right should be interpreted as denoting
their copies in $\grp q$.) These three requirements determine $\ho p
q$ in the following way.  According to the pentagon, the copy of the
subgroup $\lz 0$ in $\G p$ is mapped by $\ho pq$ to the copy of the
subgroup $\lz 0$ in $\G q$.  The subgroup $\lz0$ has four cosets in
${\mathbb Z}_0\times{\mathbb Z}_0\times{\mathbb Z}_0$, namely
\begin{alignat*}{2}
C_0&=\trip 000\scir\lz 0=\{\trip 000, \trip 100\}\comma&\quad
C_1&=\trip 010\scir\lz 0 =\{\trip 010, \trip 110\}\comma\\
C_2&=\trip 001\scir\lz 0 =\{\trip 001, \trip 101\}\comma&\quad
C_3&=\trip 011\scir\lz 0 =\{\trip 011, \trip 111\}\per
\end{alignat*}
Observe that
\begin{align*}
\lz 0\scir\lz 3&=\{\trip 000,\trip 100\}\scir\{\trip 000,\trip
111\}\\&=\{\trip 000,\trip 100, \trip 011,\trip 111\}= C_0\cup
C_3\comma\\ \lz 0\scir\lz 1&=\{\trip 000,\trip 100\}\scir\{\trip
000,\trip 010\}\\&=\{\trip 000,\trip 100, \trip 010,\trip 110\}=
C_0\cup C_1\comma\\ \lz 0\scir\lz 2&=\{\trip 000,\trip
100\}\scir\{\trip 000,\trip 001\}\\&=\{\trip 000,\trip 100, \trip
001,\trip 101\}= C_0\cup C_2\per
\end{align*} Because $\ho pq$  maps the copies of $\lz0\scir\lz 3$ and $C_0$ in
$\G p$ respectively to the copies of $\lz 0\scir\lz 1$ and $C_0$
in $\G q$, it must map the copy of $C_3$ in $\G p$ to the copy of
$C_1$ in $\G q$, by the preceding observations.  Similarly, it
must map the copies of $C_1$ and $C_2$  in $\G p$ respectively to
the copies of $C_3$ and $C_2$   in $\G q$.

The resulting group pair $\mc F$ is easily seen to be a frame, so
the group relation algebra $\cra G F$ exists. The  next step is to
modify the operation of relative multiplication  in $\cra GF$ by
introducing a coset system
\[C=\langle\cc xyz:\trip xyz\in I\times I\times I\rangle\per\]  If a triple
of indices $\trip xyz$ is not a permutation of the triple $\trip
pqr$, take $\cc xyz$ to be the identity coset,
\[\cc xyz=\h\xy\scir\h\xz\per\]  Suppose now that $\trip xyz$
is a permutation of $\trip pqr$.  As is clear from \refF{fig5}, two
different edges emanating from a given vertex $x$ are labeled with
distinct numbers, so the subgroup $\h\xy\scir\h\xz$ is a composition
of two distinct subgroups of $\G\wx$ of order $2$, and therefore has
order $4$.  It follows that the quotient group
\begin{equation*}\tag{1}\label{Eq:qgr}
\G\wx/(\h\xy\scir\h\xz)
\end{equation*}  has order $2$, so it has exactly
   two cosets, the identity coset and the non-identity coset. Take
   $\cc xyz$ to be the non-identity coset,
   \[\cc xyz=
   \G\wx\sim(\h\xy\scir\h\xz)\per\] It is not difficult to check
   that the resulting group triple
   \[\bar{\mc F}=\trip G \vp C\]
is a  coset semi-frame that satisfies the coset conditions.  For
example, the quotient group in \refEq{qgr} is abelian, so the inner
automorphism of \refEq{qgr} determined by the coset $\cc xyz$ must
be the identity automorphism.  Use, in addition, the fact that $\mc
F$ is a group frame to verify semi-frame condition (iv) for
$\bar{\mc F}$,
\[\vphih\xy\rp\vphih\yz=\vphih\xz=\tau\rp\vphih\xz\per\]

The proof that $\bar{\mc F}$ satisfies the coset conditions is
based on \refC{trithm}.  It suffices to check that conditions
(i)--(iv) of that corollary are satisfied.  As regards condition
(i), the quotient group in \refEq{qgr} has order $2$, so every
coset is its own inverse. Consequently,
\[\cc pqr\mo=\cc pqr=\cc prq=\G p\sim(\h\pq\scir\h\pr)\comma\]
and similarly \[\cc qrp\mo=\cc qpr\qquad\text{and}\qquad \cc
rpq\mo=\cc rqp\per\] As regards conditions (ii) and (iii), the
quotient isomorphisms $\vphih\pq$ and $\vphih\pr$ induced   by
$\vphi\pq$ and $\vphi\pr$ respectively map the identity coset to
the identity coset, and consequently they map the non-identity
coset to the non-identity coset. It follows that
\begin{multline*}
\vphi\pq[\cc pqr]=\vphi\pq[\G p\sim(\h\pq\scir\h\pr)]=\G
q\sim(\k\pq\scir\h\qr)\\=\G q\sim(\h\qp\scir\h\qr)=\cc qrp\comma
\end{multline*} and similarly, $\vphi\pr[\cc pqr]=\cc rpq$.
Finally, to verify condition
(iv) of the corollary, observe that each of the four edges
emanating from vertex $p$ in \refF{fig5} is labeled with a
different number. Consequently, the composite subgroups
\[\h\pq\scir\h\pr\scir\h\pw\] for $w=s,t$ have order 8, that is to say,
they coincide with $\G p$. The coset $\cc pqr$
is trivially included in their intersection, since
\[(\h\pq\scir\h\pr\scir\h\pps)\cap(\h\pq\scir\h\pr\scir\h\ppt)=\G
p\per\]  Apply \refC{trithm} to arrive at the following
conclusion.

\begin{theorem}\labelp{T:theorem7} The group triple $\bar{\mc F}$
is a coset semi-frame that satisfies the coset conditions\per
Consequently\comma the corresponding  algebra $\crah CF$ is a full
coset relation algebra and hence an example of a finite\comma
 measurable relation algebra\per
\end{theorem}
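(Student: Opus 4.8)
The plan is to verify that the group triple $\bar{\mc F}$ is a semi-frame, then that it satisfies the four coset conditions --- for which \refC{trithm} is tailor-made --- and finally to quote Coset Semi-frame \refT{costhm}.

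First I would establish the semi-frame conditions of \refD{semiframedef}. Conditions (i)--(iii) are built into the construction: the maps $\vphi\xx$ are the prescribed identity automorphisms of $\G\wx/\{\e\wx\}$, the equalities $\vphi\yx=\vphi\xy\mo$ hold by the way the $\vphi\xy$ are read off the pentagon, and the identities $\vphi\xy[\h\xy\scir\h\xz]=\k\xy\scir\h\yz$ are precisely the well-definedness conditions for the induced isomorphisms, which are encoded in the labeled edges of \refF{fig5} together with the fact that $\mc F$ is a group frame. For condition (iv) the decisive remark is that every relevant quotient group $\G\wx/(\h\xy\scir\h\xz)$, being a quotient of $(\mathbb Z_2)^3$, is abelian, so the inner automorphism $\ttrip xyz$ is the identity automorphism for every triple in $\ez 3$ (in the degenerate cases where two indices coincide the quotient is still abelian, and one reduces exactly as in the proof of \refT{simcos}). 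Hence semi-frame condition (iv) collapses to $\hvphs\xy\rp\hvphs\yz=\hvphs\xz$, which holds because $\mc F$ is a group frame.

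Next I would invoke \refC{trithm} for the triple $\trip p q r$ with $p<q<r$: by construction every coset $\cc xyz$ whose index triple is not a permutation of $\trip p q r$ equals the identity coset $\h\xy\scir\h\xz$, so it suffices to check conditions (i)--(iv) of that corollary. For (i): each quotient $\G\wx/(\h\xy\scir\h\xz)$ has order $2$, so every coset is its own inverse, and the non-identity coset of $\h\pq\scir\h\pr$ equals that of $\h\pr\scir\h\pq$; this gives $\cc p q r\mo=\cc p q r=\cc p r q$ and, symmetrically, $\cc q r p\mo=\cc q p r$ and $\cc r p q\mo=\cc r q p$. For (ii) and (iii): the induced isomorphisms $\hvphs\pq$ and $\hvphs\pr$ send the identity coset of an order-$2$ quotient to the identity coset, hence the non-identity coset to the non-identity coset; combining this with the identifications $\k\pq=\h\qp$, $\k\pq\scir\h\qr=\h\qp\scir\h\qr$, etc.\ (\refCo{con11} and semi-frame condition (iii)) yields $\vphi\pq[\cc p q r]=\cc q r p$ and $\vphi\pr[\cc p q r]=\cc r p q$. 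For (iv): the four edges of \refF{fig5} at $p$ carry four distinct labels, so $\h\pq\scir\h\pr\scir\h\pps=\h\pq\scir\h\pr\scir\h\ppt=\G p$, whence the intersection in \refC{trithm}(iv) is $\G p$ and the inclusion $\cc p q r\seq\G p$ is automatic.

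With (i)--(iv) in hand, \refC{trithm} gives that $\bar{\mc F}$ satisfies all four coset conditions, so it is a coset semi-frame satisfying the coset conditions; Coset Semi-frame \refT{costhm} then yields that $\crah CF$ is a complete, atomic, measurable relation algebra, and it is finite because $I$ and every $\G\wx$ are finite. I expect the one genuinely delicate point to be the consistency and well-definedness of the quotient isomorphisms $\vphi\xy$ read off the pentagon --- i.e.\ that $\mc F$ really is a group frame, equivalently that semi-frame conditions (iii)--(iv) and coset conditions (ii)--(iii) can be met simultaneously --- since everything else reduces, via the order-$2$ (or at worst abelian) nature of all the relevant quotients, to bookkeeping with the diagram in \refF{fig5}.
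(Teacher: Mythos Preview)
Your proposal is correct and follows essentially the same approach as the paper: verify the semi-frame conditions by observing that all the relevant quotient groups are abelian (so each $\ttrip xyz$ is the identity and condition (iv) reduces to the frame condition already satisfied by $\mc F$), then apply \refC{trithm} by checking its four hypotheses via the order-$2$ nature of the quotients and the distinctness of the edge labels at $p$. The paper's own argument is terser but hits exactly the same points in the same order.
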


It is instructive to look somewhat closer at the operation
$\,\otimes\,$ of relative multiplication in the  algebra $\crah CF$
just constructed, and to compare it with the corresponding operation
in $\cra GF$.   On atoms, $\,\otimes\,$ is determined   by
\begin{align*}\rs\xy \al\otimes\rs \wwz \beta &= \rs \xy
\al\relprodd\rs \wwz \beta \intertext{whenever $y\neq w$, or $y=w$
and $\{x,y,z\}\neq\{p,q,r\}$, and} \rs\xy \al\otimes\rs \yz \beta
&= \grp x\times \grp y\sim(\rs \xy \al\relprodd\rs \yz \beta)
\end{align*} whenever $\{x,y,z\}=\{p,q,r\}$.   Thus, the operation of relative
multiplication in $\crah CF$ is obtained by changing only slightly
the operation of relational composition in $\cra G F$ as it affects
atomic relations, namely, for those pairs of atomic relations
$\rs\xy\al$ and $\rs\yz\beta$ that are indexed, in some order, by a
permutation $\trip xyz$ of the triple $\trip pqr$, the  relative
product has been shifted to the complement of what it is in $\cra G
F$ .

\renewcommand\pr{{pr}}
\newcommand\px{{px}}
\newcommand\xq{{xq}}
\newcommand\py{{py}}
\newcommand\yq{{yq}}
\renewcommand\pq{{pq}}
\renewcommand\qr{{qr}}
\newcommand\xt{{xt}}
\newcommand\sq{{sq}}
\newcommand\sx{{sx}}
\newcommand\tq{{tq}}
\renewcommand\ps{{ps}}
\renewcommand\pt{{pt}}
\newcommand\rqq{{rq}}

It turns out that the full coset relation algebra of the theorem
is not representable as a set relation algebra, and in particular,
it is not isomorphic to a full group relation algebra.
\begin{theorem}\labelp{T:theorem70} The finite measurable relation algebra
$\cra C {\bar{\mc F}}$ is not representable\per
\end{theorem}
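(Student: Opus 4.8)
The plan is to assume that $\crah CF$ is representable, by a proper relation algebra with base set $U$, and to derive a contradiction from the defining coset shift $\cc pqr=\G p\diff(\h\pq\scir\h\pr)$. First I would recover a group coordinatisation from measurability: since $\crah CF$ is measurable with subidentity atoms the relations $\r\xx 0$ of measure $|\G x|=8$ (Coset Semi-frame \refT{costhm}), a standard argument gives a partition $U=\tbigcup_{x\in I}U_x$, with $U_x$ the field of $\r\xx 0$, and a realisation of the eight atoms $\r\xx\lph$ as permutations of $U_x$ that partition $U_x\times U_x$ and are closed under composition (because $\r\xx\lph\otimes\r\xx\bt=\r\xx\gm$ with $\cs g\gm=\cs g\lph\scir\cs g\bt$); thus they form a sharply transitive subgroup of $\mathrm{Sym}(U_x)$ isomorphic to $\G x$, so I may identify $U_x$ with $\G x$ so that $\r\xx\lph$ becomes left translation by $\cs g\lph$, the only residual freedom being a translation of $\G x$.

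Next, for $\pair xy$ in $\mc E$ with $x\ne y$, I would compute inside the algebra the relative products $\r\xx\lph\otimes\r\xy 0$, $\r\xy 0\otimes\r\yy\mu$, $\r\xy 0\otimes\r\yx 0$ and $\r\yx 0\otimes\r\xy 0$ --- the last three by \refL{otimes.1}, Composition \refT{compthm} and \refCo{con11}, since $\trip xyx$ and $\trip yxy$ are not permutations of $\trip pqr$ --- and conclude that, as a concrete relation, $\r\xy 0$ is the ``coset rectangle'' $\tbigcup_{\bar u}\bar u\times\psi_\xy(\bar u)$ for a bijection $\psi_\xy$ from the cosets of $\h\xy$ in $\G x$ onto the cosets of $\k\xy$ in $\G y$, the remaining atoms $\r\xy\gm$ being its translates in the first coordinate; what pins the pattern down is that $\r\xy 0\otimes\r\yx 0$ must be the equivalence relation on $\G x$ whose classes are the cosets of $\h\xy$, and $\r\yx 0\otimes\r\xy 0$ the corresponding one on $\G y$.

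The crucial step, and the one I expect to be the main obstacle, is to show that each $\psi_\xy$ is only a translate of the intended quotient isomorphism $\vphi_\xy$. Every bijection of the Klein four-group $\G x/\h\xy$ is affine, so up to a translation $\psi_\xy=\vphi_\xy\scir\sigma_\xy$ with $\sigma_\xy$ an automorphism of $\G x/\h\xy$. Whenever $\trip xyz$ in $\ez 3$ is \emph{not} a permutation of $\trip pqr$ the product $\r\xy 0\otimes\r\yz 0$ is, abstractly, a union of two atoms, hence so is the honest composite of $\r\xy 0$ and $\r\yz 0$ in the representation; reading this back through the coset patterns forces $\sigma_\xy$ to preserve the order-two subgroup $(\h\xy\scir\h\xz)/\h\xy$ of $\G x/\h\xy$. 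Here the auxiliary vertices $s$ and $t$ are essential: each of the three inner edges $pq$, $pr$, $qr$ lies in two non-special triples (through $s$ and through $t$) and every other edge lies in three, and --- since the pentagon is arranged so that the four edges at every vertex carry the four distinct subgroups $\lz 0,\lz 1,\lz 2,\lz 3$ --- the subgroups $(\h\xy\scir\h\xz)/\h\xy$ obtained this way exhaust the three order-two subgroups of $\G x/\h\xy$. An automorphism of a Klein four-group fixing all three of its order-two subgroups is the identity, so $\sigma_\xy$ is trivial and $\psi_\xy$ is $\vphi_\xy$ followed by translation by a coset $c_\xy$ of $\k\xy$ in $\G y$. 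Without $s$ and $t$ this automorphism ambiguity could absorb the shift, which is why the substructure on $\{p,q,r\}$ alone need not be non-representable.

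Finally I would evaluate the shift. For $\trip xyz$ in $\ez 3$ let $\epsilon_\xy^z\in\{0,1\}$ be the image of $c_\xy$ under $\G y/\k\xy\to\G y/(\k\xy\scir\h\yz)$, a two-element group since distinct edges at $y$ carry distinct subgroups. A direct computation of the honest composite of the coset rectangles with patterns $\psi_\xy$ and $\psi_\yz$ shows that, on the double quotient $\G x/(\h\xy\scir\h\xz)\to\G z/(\k\xz\scir\k\yz)$, it is the isomorphism induced by $\vphi_\xz$ followed by translation by $\epsilon_\xy^z\oplus\epsilon_\yz^x$ (here semi-frame conditions (iii) and (iv) --- the latter reducing to $\hvphs\xy\relprodd\hvphs\yz=\hvphs\xz$ because $\G x$ is abelian --- and \refCo{con11} identify the quotients involved). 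Comparing this with the abstract value $\r\xy 0\otimes\r\yz 0=\tbigcup\{\r\xz\gm:\hs\xz\gm\seq\cc xyz\}$ given by \refD{smult} and semi-frame condition (iii), the bit $b\trip xyz$ that records whether $\cc xyz$ is the identity coset $\h\xy\scir\h\xz$ must equal $\epsilon_\xy^z\oplus\epsilon_\yz^x$. But $b$ is invariant under cyclic permutation of $\trip xyz$: since $\bar{\mc F}$ satisfies the coset conditions, $\vphi_\xy[\cc xyz]=\cc yzx$ by \refL{cysim}(v), and $\vphi_\xy$ carries the identity coset of $\h\xy\scir\h\xz$ onto the identity coset of $\k\xy\scir\h\yz$. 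Adding the three cyclic instances $b\trip xyz=\epsilon_\xy^z\oplus\epsilon_\yz^x$, $b\trip yzx=\epsilon_\yz^x\oplus\epsilon_\zx^y$ and $b\trip zxy=\epsilon_\zx^y\oplus\epsilon_\xy^z$ modulo $2$ yields $b\trip xyz\oplus b\trip yzx\oplus b\trip zxy=0$, whence $b\trip xyz=0$ for every triple in $\ez 3$. This is the sought contradiction, because $\cc pqr$ is by construction the non-identity coset, so $b\trip pqr=1$. Hence $\crah CF$ has no representation. The coordinatisation and the coset-rectangle shape are routine for measurable relation algebras, and the last computation is short, so the weight of the argument rests on the rigidity step, where the design of the pentagon is used in full.
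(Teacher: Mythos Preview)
Your approach is genuinely different from the paper's and considerably more elaborate. The paper gives a short, direct combinatorial argument: assuming a representation $\vth$ over a base set $V$, it picks one point $v_x$ in each block $V_x$ and lets $a_{xy}$ be the unique atom whose image contains $(v_x,v_y)$. This ``scaffold'' automatically satisfies $a_{xz}\le a_{xy}\otimes a_{yz}$; one then checks that $(a_{ps}\otimes a_{sq})\cap(a_{pt}\otimes a_{tq})$ is a single atom, so choosing concrete elements $u_s,u_t,u_p,u_q,u_r$ as in a network argument forces $(u_p,u_q)\in a_{pq}$ and simultaneously $(u_p,u_q)\in a_{pr}\relprodd a_{rq}$. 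But $a_{pq}\le a_{pr}\otimes a_{rq}=\G p\times\G q\sim(a_{pr}\relprodd a_{rq})$, a contradiction. No group coordinatisation, no rigidity, no cohomology.

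Your structural route---recover the groups from measurability, pin down each $\psi_{xy}$ as $\vphi_{xy}$ plus a translation $c_{xy}$ via the Klein-four rigidity, then read off the shift as a $\mathbb Z_2$-valued cocycle---is an attractive idea, but there is a concrete error in the last step. When you compare the honest composite of the coset rectangles with the representation of $\tbigcup\{\r\xz\gm:\hs\xz\gm\seq\cc xyz\}$, you have forgotten that the latter is built from the $\vth(\r\xz\gm)$, each of which itself carries the translation $c_{xz}$. The correct identity is
\[
b\trip xyz \;=\; \epsilon_{xy}^{\,z}\oplus\epsilon_{yz}^{\,x}\oplus\epsilon_{xz}^{\,y},
\]
with three edge terms, not two. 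With this symmetric three-term formula your cyclic sum $b\trip xyz\oplus b\trip yzx\oplus b\trip zxy$ equals $b\trip xyz$ itself (each term appears three times), so the argument collapses to a tautology and yields no contradiction.

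The strategy can be repaired, but not with a cyclic sum over a single triangle. The extra constraint you need comes from the $V_4$ structure of $\G y/\k\xy$: the three projections $\epsilon_{xy}^{\,z}$ for the three choices of $z$ are not independent but satisfy $\bigoplus_z\epsilon_{xy}^{\,z}=0$ for every edge $xy$. Summing the three-term identity over \emph{all} ten triangles of the pentagon, each $\epsilon_{xy}^{\,z}$ occurs exactly once, and the edge constraint forces $\bigoplus_T b(T)=0$; since only $b\trip pqr=1$, this gives the contradiction. So your programme does go through, but with a global parity over all triangles rather than a local cyclic one---and the paper's scaffold argument reaches the same conclusion in a few lines without any of this machinery.
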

\begin{proof}
Write $\f A=\cra C {\bar{\mc F}}$.  The argument that $\f A$ is not
representable proceeds by contradiction.  Assume that it is
representable, say $\vth$ is a representation of $\f A$  over a base
set  $V$.  Because $\f A$ is simple in the algebraic sense of the
word (see the remarks preceding \refT{simple1} below), it may be
assumed that the unit of the representation is the Cartesian square
$V\times V$ (see, for example, Theorem 16.18 in \cite{giv18b}).  We
identify $R_{xx,0}$ with $x$ in the proof, so that the set $I$
becomes the set of measurable atoms of $\f A$. This permits some
simplification in the  notation.

The first step is to use the representation $\vth$ for constructing
a \emph{scaffold} in $\f A$, that is to say, a system
 of atoms $\langle \cs a\xy:x,y \in I\rangle$ satisfying the following three conditions
 for all measurable atoms $x$, $y$, and $z$ in $I$.\begin{align*} \cs a \xx&=x\per\tag{1}\labelp{Eq:70.1}\\
\cs a \yx&=\cs a \xy\ssm\per\tag{2}\labelp{Eq:70.2}\\
\cs a \xz &\le\cs a\xy\otimes\cs a\yz\per\tag{3}\labelp{Eq:70.3}
\end{align*} Each element $x$ in $I$ is a subidentity atom, so its image $\vth(x)$ must be $\id{\cs Vx}$
for some non-empty subset $\cs Vx$ of $V$, these sets are mutually disjoint for distinct $x$, and
because $\f A$ is finite, \[\tbigcup\{\id{\cs Vx}:x\in I\} = \tbigcup\{\vth(x):x\in I\}=\vth(\tsum I)=\vth(\ident)=\id V\per\]
For each $x$ in $I$, choose an element $\cs vx$ in $\cs Vx$, and for each pair of elements $x$, $ y $, let $\cs a\xy$
be the unique atom in $\f A$ such that \[\pair {\cs vx}{\cs vy}\in \vth(\cs a\xy)\per\]
Since $\vth(x)$ is the unique atom containing $\pair {\cs vx}{\cs
vx}$, property \refEq{70.1} follows. Since $\vth(\cs a\xy\ssm)$ is
an atom (the converse of an atom is an atom) that contains $\pair
{\cs vy}{\cs vx}$, by the representation properties of $\vth$,
property \refEq{70.2} follows. Since $\pair {\cs vx}{\cs vy}$ is in
$\vth(\cs a\xy)$ and $\pair {\cs vy}{\cs vz}$ is in $\vth(\cs
a\yz)$, it follows from the definition of relational composition
that $\pair {\cs vx}{\cs vz}$ is in $\vth(\cs a\xy)\rp \vth(\cs
a\yz)$.  The representation properties of $\vth$ imply that
\[\vth(\cs a\xy)\rp \vth(\cs a\yz)=\vth(\cs a\xy\otimes\cs a\yz)\per\]
Thus, $\vth(\cs a\xz)$ and $\vth(\cs a\xy\otimes\cs a\yz)$ have a
non-empty intersection---they both contain the pair $\pair {\cs
vx}{\cs vz}$---so the former, which is an atom,  must be below the
latter. Use the representation properties of $\vth$ one more time to
conclude that \refEq{70.3} holds.  This completes the proof of the
three scaffold conditions.

Here are some further properties of the elements $\cs a\xy$ that we
shall need.  Notice that each such atom is actually one of the
atomic binary relations of $\f A$ on the base set $U=\tbigcup\{\G
x:x\in I\}$, so it makes sense to speak of the pairs in $\cs a\xy$.
The converse of each atom is the set-theoretic relational inverse,
in symbols,
\begin{align*}
\cs a\yx&=\cs a\xy\mo\per\tag{4}\label{Eq:70.4}\\ \intertext{Second,
the relative product of two elements is the set-theoretic relation
composition of the elements as long as the set of indices
$\{x,y,z\}$ does not coincide with the set $\{p,q,r\}$,} \cs
a\xy\otimes\cs a\yz&=\cs a\xy\rp\cs a\yz\per\tag{5}\label{Eq:70.5}
\end{align*} Third, the relative product is disjoint from the relational
composition when the two sets of indices $\{x,y,z\}$ and $\{p,q,r\}$ are equal,
 \begin{equation*}\tag{6}\label{Eq:70.6}
\cs a\xy\otimes\cs a\yz=\G x\times\G y\sim\cs a\xy\rp\cs a\yz\per
\end{equation*} Fourth, the intersection of certain relative products that share a
common ``edge" is an atom when that common edge is $pq$ or $qr$ or $pr$.  Specifically, %if $\{x,y\}=\{s,t\}$, then
 \begin{equation*}\tag{7}\label{Eq:70.7}
(\cs a\ps\otimes\cs a\sq)\cap (\cs a\pt\otimes\cs a\tq)=\cs
a\pq\comma
\end{equation*}
and similarly if $pq$ is replaced by either $qr$ or $pr$.

Choose elements $\cs us$ and $\cs ut$ in $U$ so that
\begin{equation*}\tag{8}\label{Eq:70.8}
\pair{\cs us}{\cs ut}\in\cs a\st\per
\end{equation*}
Such a choice is possible because $\cs a\st$ is a non-empty binary relation.  Since for each $x=p,q,r$
\begin{equation*}\tag{9}\label{Eq:70.9}
\cs a \st\le \cs a\sx\rp\cs a\xt\comma
\end{equation*} by \refEq{70.3} and \refEq{70.5}, the pair in \refEq{70.8} must also belong to the the
right side of \refEq{70.9}, so that there must be an element $\cs ux$ in $U$ for which
\[\pair{\cs us}{\cs ux}\in\cs a\sx\qquad\text{and}\qquad \pair{\cs ux}{\cs ut}\in\cs a\xt\comma\]by \refEq{70.8}.
In particular, take $x=p,q$, and use \refEq{70.4}, to obtain
\[\pair{\cs up}{\cs us}\in\cs a\ps\qquad\text{and}\qquad \pair{\cs us}{\cs uq}\in\cs a\sq\comma\] so that
\[\pair{\cs up}{\cs uq}\in\cs a\ps\rp\cs a \sq=\cs a\ps\otimes\cs a\sq, \] and also to obtain
\[\pair{\cs up}{\cs ut}\in\cs a\pt\qquad\text{and}\qquad \pair{\cs ut}{\cs uq}\in\cs a\tq\comma\] so that
\[\pair{\cs up}{\cs uq}\in\cs a\pt\rp\cs a \tq=\cs a\pt\otimes\cs a\tq\per \]  Apply \refEq{70.7} to arrive at
\begin{equation*}\tag{10}\label{Eq:70.10}
\pair{\cs up}{\cs uq}\in\cs a\pq\per
\end{equation*} Similar arguments applied to $p$ and $r$ and to $r$ and $q$ lead to
\begin{equation*}\tag{11}\label{Eq:70.11}
\pair{\cs up}{\cs ur}\in\cs a\pr\qquad\text{and}\qquad
\pair{\cs ur}{\cs uq}\in\cs a\rqq\per
\end{equation*}  In view of the definition of relational composition,  \refEq{70.11} implies that
\begin{equation*}\tag{12}\label{Eq:70.12}
\pair{\cs up}{\cs uq}\in\cs a\pr\rp \cs a\rqq\per
\end{equation*}

Together, \refEq{70.10} and \refEq{70.12} show that the intersection
\[\cs a\pq\cap(\cs a\pr\rp\cs a \rqq)\] is not empty, since both factors contain the pair $\pair{\cs up}{\cs uq}$\per
The left-hand factor is an atom, so
\begin{equation*}\tag{13}\label{Eq:70.13}
\cs a\pq \seq \cs a\pr\rp \cs a\rqq\per
\end{equation*}  On the other hand,
\[\cs a\pq \seq \cs a\pr\otimes \cs a\rqq = \G p\times\G q\sim\cs a\pr\rp \cs a\rqq\comma\] by
\refEq{70.3} and \refEq{70.6}.  This is a direct contradiction to \refEq{70.13}, so the assumption
that $\f A$ is representable cannot be tenable.
\end{proof}

The group  ${\mathbb Z}_2$ can be replaced everywhere in the
preceding construction by an arbitrary non-trivial abelian group.
The mappings $\ho x y$ are no longer uniquely determined, and the
definition of relative multiplication is slightly more involved. In
each case we get an atomic, measurable relation algebra that is not
representable.  These are new examples of non-representable relation
algebras, with a completely different underlying motivation than the
examples  that have appeared so far in the literature.

\section{A Decomposition Theorem}\labelp{S:sec6}

The  isomorphism index set $\mc E$ of a coset semi-frame $\mc
F=\trip G\vp C$ satisfying the coset conditions is  an equivalence
relation on the group index set $I$, and the unit
\[E=\tbigcup\{\G\wx\times\G\wy:\pair xy\in \mc E\}\] of the
corresponding  full coset relation algebra $\cra C F$ is an
equivalence relation on the base set $U=\tbigcup_{x\in I}\cs Gx$.
Call the semi-frame $\mc F$ \textit{simple} if the group index set
$I$ is not empty, and if $\mc E$ is the universal relation on the
index set $I$. It turns out that $\mc F$ is simple in this sense of
the word if and only if the algebra $\cra CF$ is simple in the
algebraic sense of the word, namely, it has more than one element
and every non-constant homomorphism on the algebra is injective; or,
equivalently, the algebra  has exactly two ideals, the trivial ideal
and the improper ideal.

\begin{theorem}\labelp{T:simple1} Let $\mc F$ be a semi-frame satisfying
the coset conditions\po  The  coset relation algebra $\cra C F$ is
simple if and only if the semi-frame $\mc F$ is simple\po
\end{theorem}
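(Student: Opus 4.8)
The plan is to reduce simplicity of $\cra C F$ to the behavior of its unit under relative multiplication, and then to carry out one explicit coset computation. By Coset Semi-frame \refT{costhm}, $\cra C F$ is a complete, atomic relation algebra whose unit is $E=\tbigcup\{\G\wx\times\G\wy:\pair\wx\wy\in\mc E\}$ and whose identity is $\id U=\tbigcup_{\wx\in I}\r{xx}{0}$. I will use the classical fact (see, e.g., \cite{giv18b}) that a relation algebra is simple if and only if it has more than one element and $1\otimes r\otimes 1=1$ for every nonzero $r$; equivalently, if and only if $0\ne 1$ and its only ideal elements (elements $a$ with $1\otimes a\otimes 1=a$) are $0$ and $1$. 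Since $E$ is the unit of a relation algebra we have $E\otimes E=E$, and $\id U$ is a two-sided identity for $\,\otimes\,$, so for \emph{every} $r$ the element $a=E\otimes r\otimes E$ is an ideal element lying above $r$.

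The computational core is the claim that, for every atom $\r\xy\lph$ of $\cra C F$,
\[E\otimes\r\xy\lph\otimes E=E_{[\wx]}:=\tbigcup\{\G\wu\times\G\ww:\wu,\ww\in[\wx]_{\mc E}\}\comma\]
where $[\wx]_{\mc E}$ denotes the $\mc E$-class of $\wx$ (note $\pair\wx\wy\in\mc E$ since $\r\xy\lph$ is an atom). To prove this, I would first write $E=\tbigcup\{R_{uv,\beta}:\pair\wu\wv\in\mc E,\ \beta<\kappa_{uv}\}$ and use distributivity of $\,\otimes\,$ together with \refD{smult} to get $E\otimes\r\xy\lph=\tbigcup\{R_{ux,\beta}\otimes\r\xy\lph:\wu\mathrel{\mc E}\wx,\ \beta<\kappa_{ux}\}$, the terms with a mismatched middle index vanishing by \refD{smult}. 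For fixed $\wu\mathrel{\mc E}\wx$, \refD{smult} expresses $R_{ux,\beta}\otimes\r\xy\lph$ as the union of those atoms $R_{uy,\gm}$ with $H_{uy,\gm}\seq\vphi{ux}\mo[K_{ux,\beta}\scir H_{xy,\lph}]\scir\cc uxy$; as $\beta$ varies, $H_{ux,\beta}$ runs through all cosets of $\h{ux}$ in $\G\wu$, so $K_{ux,\beta}=\vphi{ux}(H_{ux,\beta})$ runs through all cosets of $\k{ux}$, hence $K_{ux,\beta}\scir H_{xy,\lph}$ through all cosets of $\k{ux}\scir\h{xy}$, hence $\vphi{ux}\mo[K_{ux,\beta}\scir H_{xy,\lph}]$ through all cosets of $\h{ux}\scir\h{uy}$ by semi-frame condition (iii) for $\trip\wu\wx\wy$, and finally right-multiplication by the fixed coset $\cc uxy$ permutes the cosets of $\h{ux}\scir\h{uy}$. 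Therefore $\tbigcup_\beta(R_{ux,\beta}\otimes\r\xy\lph)=\tbigcup_\gm R_{uy,\gm}=\G\wu\times\G\wy$ by Partition \refL{i-vi}, so $E\otimes\r\xy\lph=\tbigcup\{\G\wu\times\G\wy:\wu\mathrel{\mc E}\wx\}$. A symmetric computation — again using that summing over all cosets of the relevant subgroup makes each $\,\otimes\,$-product fill its whole Cartesian product — gives $(\G\wu\times\G\wy)\otimes E=\tbigcup\{\G\wu\times\G\ww:\ww\mathrel{\mc E}\wy\}$, and combining the two yields the claim. I expect this coset-chasing, and in particular the bookkeeping needed to see that the relevant $\,\otimes\,$-products exhaust $\G\wu\times\G\wy$, to be the main (if routine) obstacle.

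With the claim established the theorem follows quickly. Suppose $\mc F$ is simple, so $I\ne\varnot$ and $\mc E=I\times I$; then $I$ is the unique $\mc E$-class, so $E_{[\wx]}=E$ for every atom. Given $r\ne 0$, choose by atomicity an atom $\r\xy\lph\le r$; then $E=E\otimes\r\xy\lph\otimes E\le E\otimes r\otimes E\le E$, so $E\otimes r\otimes E=E$. Also $\cra C F$ has more than one element, since $I\ne\varnot$ forces some $\G\wx$ to be a non-empty group, whence $\varnot\ne\r{xx}{0}=\id{\G\wx}$. Hence $\cra C F$ is simple. Conversely, suppose $\mc F$ is not simple. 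If $I=\varnot$ then $U=E=\varnot$ and $\cra C F$ is a one-element algebra, hence not simple. Otherwise $I\ne\varnot$ but $\mc E\ne I\times I$, so there are $\wx,\wy\in I$ with $\pair\wx\wy\notin\mc E$; then $[\wx]_{\mc E}$ is a non-empty proper subset of $I$, and by the claim (with $\wy=\wx$, $\lph=0$) the element $a=E\otimes\r{xx}{0}\otimes E=E_{[\wx]}$ is an ideal element with $\varnot\ne a$ (it contains $\r{xx}{0}=\id{\G\wx}$) and $a\ne E$ (it is disjoint from the non-empty relation $\id{\G\wy}\le E$, since $\wy\notin[\wx]_{\mc E}$ and distinct groups are disjoint). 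Thus $\cra C F$ has an ideal element other than $0$ and $1$, and so is not simple. This completes the plan.
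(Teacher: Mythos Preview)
Your proof is correct and your computational core---showing that, for fixed $\wu,\wy$ with $\trip\wu\wx\wy\in\ez 3$, the union $\tbigcup_\beta(R_{ux,\beta}\otimes\r\xy\lph)$ is all of $\G\wu\times\G\wy$---is exactly the paper's preliminary observation \refEq{simple1.1}, obtained by the same coset-exhaustion argument. The forward direction is essentially the same in both proofs; the paper invokes the slightly weaker criterion that $1;r;1=1$ for all \emph{subidentity} atoms, but the work is identical.

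The genuine difference is in the reverse direction. The paper postpones it, proves the Decomposition \refT{cosetdecomp} (that $\cra C F$ is the direct product of the coset relation algebras on the components $\cs{\mc F}J$), and then concludes non-simplicity from the existence of at least two non-degenerate direct factors. You instead give a direct, self-contained argument: your computation of $E\otimes\r\xy\lph\otimes E=E_{[\wx]}$ immediately produces a nontrivial ideal element whenever $\mc E$ has more than one class. Your route is more elementary and avoids any appeal to the decomposition machinery; the paper's route has the advantage that the decomposition theorem is of independent interest and is being proved anyway, so reusing it costs nothing.
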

\begin{proof} We begin with a preliminary observation: for all
triples $\trip xyz$ in $\ez 3$,
\begin{equation*}\tag{1}\label{Eq:simple1.1}
\tbigcup\{\r\xy\a\otimes\r\yz\b:\a<\kai\xy\text{ and
}\b<\kai\yz\}=\G\wx\times\G\wz\per
\end{equation*}
For the proof, suppose that $\trip xyz$ is in $\ez 3$.  The
definition of $\,\otimes\,$ implies  that
\begin{equation*}\tag{2}\labelp{Eq:squaresum1}
\r\xy\a\otimes\r\yz\b=\tbigcup\{\r\xz\g:
\hs\xz\g\seq\vphi\xy\mo[\ks\xy\a\scir\hs\yz\b]\scir\cc x y z\}\per
\end{equation*}
Each relation $\r\xz\g$ is included in
\begin{equation*}\tag{3}\label{Eq:simple1.3}
\G\wx\times\G\wz\comma
\end{equation*} by
Partition \refL{i-vi}, so each product of the form
\refEq{squaresum1} is included in \refEq{simple1.3},
 and therefore the left side of \refEq{simple1.1}
is included in the right side.

To establish the reverse inclusion, notice that as the indices
$\a$ and $\b$ vary, the complex products $\ks\xy\a\scir\hs\yz\b$
run through all cosets of the subgroup $\kh$. The function
$\vphi\xy$ induces an isomorphism from the quotient group
$\G\wx/(\hh)$ to the quotient group $\G\wy/(\kh)$, so the inverse
images $\vphi\xy\mo[\ks\xy\a\scir\hs\yz\b]$ must run through all
of the cosets of $\hh$\per  It follows that, as $\a$ and $\b$
vary, the complex products
\begin{equation*}\tag{4}\labelp{Eq:squaresum2}
\vphi\xy\mo[\ks\xy\a\scir\hs\yz\b]\scir\cc x y z
\end{equation*}
must also run through all cosets of $\hh$, because $\cc x y z$ is
a fixed element of the quotient group $\G\wx/(\hh)$.  Thus, for
each index $\g<\kai\xz$\comma there are indices $\a<\kai\xy$ and
$\b<\kai\yz$ such that the coset $\hs\xz\g$ of $\h\xz$ is included
in \refEq{squaresum2}. The relation $\r\xz\g$  is therefore
included in $\r\xy\a\otimes\r\yz\b$\comma by \refEq{squaresum1}.
The union of all of the relations $\r\xz\g$  is \refEq{simple1.3},
by Partition \refL{i-vi}, so the right side of \refEq{simple1.1}
must be included in the   left side.

Turn now to the proof of the theorem, and assume first that the
semi-frame $\mc F$ is simple. The isomorphism index set $\mc E$ is
the universal relation on the group index set $I$, by assumption,
so
\begin{multline*}\tag{5}\label{Eq:simple1.4}
U\times U=(\tbigcup_{\wx\in I}\G\wx)\times (\tbigcup_{\wy\in
I}\G\wy)=\tbigcup\{\G\wx\times\G\wy:\wx,\wy\in U\}\\
=\tbigcup\{\r\xy\a:x,y\in U\text{ and
}\a<\kai\xy\}=\tbigcup\{\G\wx\times\G\wy:\pair\wx \wy\in \mc
E\}=E\comma
\end{multline*}
 by the definition of $U$, the distributivity of Cartesian products over
 arbitrary unions, Partition \refL{i-vi}, the assumption on $\mc E$, and the
 definition of $E$.  The index set $I$ is assumed to be non-empty,
 and the groups are non-empty, so the unit $U\times U$
of $\cra CF$ is non-empty and therefore different from the zero
element $\varnot$. In particular, the relation algebra $\cra CF$
has more than one element.

In order to show that a non-degenerated, atomic relation algebra
is simple, it suffices to show that the equation $1;r;1=1 $ holds
for every subidentity atom $r$ (see, for example, Givant\,\cite{giv18}, Theorem 9.2).
A subidentity atom of $\cra C F$
has the form $\r \yy0$ for some $\wy$ in $I$, so it must be shown
that
\begin{equation*}\tag{6}\labelp{Eq:simple13} (U\times U)\otimes \r
\yy 0\otimes (U\times U)=U\times U
\end{equation*} for every $\wy$ in $I$.
 Use \refEq{simple1.4} and    the distributivity of
$\,\otimes\,$ over arbitrary unions to rewrite the left side of
\refEq{simple13} as the union of the relations
\begin{equation*}\tag{7}\labelp{Eq:simple14}
\r\xu\a\otimes\r\yy 0\otimes\r\vz\b
\end{equation*}  over all $x,u,v,z$ in $I$, with $\a<\kai\xu$ and
$\b<\kai\vz$. If  $\wu\neq\wy$ or $\wv\neq\wy$, then the relation
in \refEq{simple14} reduces to the empty relation, by the
definition of $\,\otimes\,$. The left side of \refEq{simple13} is
therefore equal to the union of the relations
\begin{equation*}\tag{8}\labelp{Eq:simple15}
\r\xy\a\otimes\r\yy 0\otimes\r\yz\b
\end{equation*}
over all $x$ and $ z$ in  $I$, with $\a<\kai\xy$ and $\b<\kai\yz$.
The coset condition for the identity law, which $\mc F$ is assumed
to satisfy, and Identity Law \refT{identthm2}, imply that
\begin{equation*}%\tag{9}\labelp{Eq:simple12}
\r\xy\a\otimes\r\yy 0=\r\xy\a\per
\end{equation*}    Consequently,
\refEq{simple15} reduces to
\begin{equation*}\tag{9}\labelp{Eq:simple16}
\r\xy\a\otimes\r\yz\b\per
\end{equation*} For fixed $\wx$ and $\wz$, the union, over all
$\a$ and $\b$, of the relations in \refEq{simple16}
 is \refEq{simple1.3},
 by the preliminary observation in
\refEq{simple1.1}.  The union of all relations of the form
\refEq{simple14} therefore coincides with the union of all
relations of the form \refEq{simple1.3}, and this latter union is
just $U\times U$, by \refEq{simple1.4}. Conclusion: the equation
in \refEq{simple13} holds in $\cra CF$ for all $\wy$ in $I$, as
was to be shown.

We postpone the proof of the reverse implication of the theorem
until after the next theorem.
\end{proof}

It turns out that every full coset relation algebra can be
decomposed into the direct product of simple, full coset relation
algebras, or equivalently,  full coset relation algebras on simple
frames. We sketch briefly how this decomposition may be
accomplished. Given an arbitrary coset semi-frame
\[\mc F=(\langle \G x:x\in
I\,\rangle\smbcomma\langle\vph_\xy:\pair xy\in \mc E\rangle\smbcomma
\rangle\smbcomma\langle\cc x y z:\trip xyz\in \ez3 \rangle)\comma\]
consider an equivalence class $J$ of the isomorphism index set $\mc
E$. The universal relation $J\times J$ on $J$ is a subrelation of
$\mc E$, and in fact it is a maximal connected component of $\mc E$
in the graph-theoretic sense of the word. The \textit{restriction}
of $\mc F$ to $J$  is defined to be the group triple
\[\mc F_J=(\langle \G x:x\in
J\,\rangle\smbcomma\langle\vph_\xy:\pair xy\in J\times
J\rangle\smbcomma \rangle\smbcomma\langle\cc x y z: \trip xyz\in
J\times J\times J\rangle)\]    Each such restriction of $\mc F$ to
an equivalence class of the index set $\mc E$ inherits the coset
semi-frame properties of $\mc F$, and is therefore a simple
semi-frame. Call these restrictions the \textit{components} of $\mc
F$. Clearly, $\mc F$ is the disjoint union of its components in the
sense that the group system, the isomorphism system, and the coset
system of $\mc F$ are obtained by respectively forming the unions of
the group systems, the isomorphism systems, and the coset systems
of the components of $\mc F$. It is also easy to see that $\mc F$
satisfies the coset conditions if and only if each component
satisfies the coset conditions, because these conditions are
formulated only for cosets $\cc xyz$ such that the elements $x$,
$y$, and $z$ all belong to  the same equivalence class of $\mc E$.

If $\mc F$ is a semi-frame satisfying the coset conditions, then
so is each component $\cs{\mc F}J$, and consequently $\cras C {\cs
{\mc F}J }$ is a full coset relation algebra that is simple, with
base set and unit
\[\cs UJ=\tbigcup_{x\in J}\cs Gx\qquad\text{and}\qquad\cs Ej=\cs
UJ\times\cs UJ\] respectively. The coset relation algebra $\cra
CF$ is isomorphic to the direct product of the simple coset
relation algebras $\cras C{\cs {\mc F}J}$ constructed from the
components of $\mc F$ (so $J$ varies over the equivalence classes
of $\mc E$). In fact, if internal direct products are used instead
of Cartesian direct products, then $\cra C F$ is actually equal to
the internal direct product of the full coset relation algebras
constructed from its component semi-frames.
\begin{theorem}[Decomposition Theorem]\labelp{T:cosetdecomp} Every  full
coset relation algebra is isomorphic to a direct product of full
coset relation algebras on simple frames\per\end{theorem}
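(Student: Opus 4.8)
The plan is to make rigorous the decomposition already sketched in the paragraphs preceding the statement. Fix a full coset relation algebra $\cra CF$; by the definition of a full coset relation algebra, $\mc F=\trip G\vph C$ is a coset semi-frame satisfying the coset conditions, and we write $A$ for the universe of $\cra CF$. Let $\{J_i:i\in\Lambda\}$ enumerate the equivalence classes of the isomorphism index set $\mc E$, and for each $i$ let $\mc F_i$ denote the restriction of $\mc F$ to $J_i$, with base set $\cs U{J_i}=\tbigcup_{x\in J_i}\G x$. As observed above, each of the four semi-frame conditions and each of the four coset conditions is stated only for tuples of indices lying in a single $\mc E$-class, so each $\mc F_i$ is again a coset semi-frame satisfying the coset conditions; by Coset Semi-frame \refT{costhm}, $\cras C{\mc F_i}$ is therefore a full coset relation algebra with base set $\cs U{J_i}$ and unit $\cs U{J_i}\times\cs U{J_i}$. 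Moreover $\mc F_i$ is simple, since its index set $J_i$ is a nonempty $\mc E$-class and the restriction of $\mc E$ to $J_i$ is the universal relation on $J_i$; hence $\cras C{\mc F_i}$ is a simple relation algebra by the ``simple semi-frame implies simple algebra'' direction of \refT{simple1}. The goal is to show that
\[
\Phi\colon r\longmapsto\langle\, r\cap(\cs U{J_i}\times\cs U{J_i}):i\in\Lambda\,\rangle
\]
is an isomorphism of $\cra CF$ onto the direct product $\prod_{i\in\Lambda}\cras C{\mc F_i}$. (If $I=\varnot$ then $\Lambda=\varnot$ and both sides are the one-element algebra, so that case is trivial.)

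First I would check that $\Phi$ is a Boolean isomorphism onto the Boolean reduct of the product. By Boolean Algebra \refT{disj}, the atoms of $A$ are exactly the relations $\r\xy\a$ with $\pair xy\in\mc E$ and $\a<\kai\xy$; each such atom is a subset of $\G x\times\G y$ with $x$ and $y$ in one common $\mc E$-class, so it lies inside exactly one of the pairwise disjoint sets $\cs U{J_i}\times\cs U{J_i}$, and $E=\tbigcup_i(\cs U{J_i}\times\cs U{J_i})$ is a partition of the unit of $\cra CF$ into members of $A$. Consequently every element of $A$, being a union of atoms, is the union over $i$ of its pieces $r\cap(\cs U{J_i}\times\cs U{J_i})$, each piece being a member of the universe of $\cras C{\mc F_i}$, and conversely any choice of one element from each $\cras C{\mc F_i}$ reassembles to an element of $A$; hence $\Phi$ is a bijection. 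It plainly preserves $\cup$ and $\diff$; it carries $\id U$ to $\langle\id{\cs U{J_i}}\rangle_i$, because $\id U$ is the disjoint union of the subidentity atoms $\r\xx 0$ (by Identity \refT{identthm1} and semi-frame condition (i), exactly as in the proof of \refT{costhm}); and it commutes with converse, since forming the relational inverse preserves the decomposition of $E$.

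Next I would verify that $\Phi$ carries $\,\otimes\,$ to the coordinatewise relative product. Since $\,\otimes\,$ distributes over arbitrary unions in each argument and $\Phi$ is a complete Boolean isomorphism, it suffices to check this on atoms $\r\xy\a$ and $\r\wwz\b$. If $y\neq w$, or if $y=w$ but $x$ and $z$ lie in different $\mc E$-classes, then $\r\xy\a\otimes\r\wwz\b=\varnot$ by \refD{smult} and \refL{emptycomp}; on the product side, a relative product of two atoms carried by distinct coordinates is likewise $0$. If $x$, $y$, $z$ all lie in one class $J_i$, then the defining formula of \refD{smult} expresses $\r\xy\a\otimes\r\yz\b$ purely in terms of $\G x,\G y,\G z$, the isomorphisms $\vphi\xy,\vphi\yz$, and the shifting coset $\cc xyz$, all of which are data of $\mc F_i$; hence the value computed in $\cra CF$ coincides with the value computed in $\cras C{\mc F_i}$ and lies in $\cs U{J_i}\times\cs U{J_i}$. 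Thus $\Phi(r\otimes s)=\Phi(r)\otimes\Phi(s)$ on atoms, and therefore on all of $A$. So $\Phi$ is an isomorphism of relation algebras, and since each $\cras C{\mc F_i}$ is a simple full coset relation algebra on the simple component semi-frame $\mc F_i$, the Decomposition Theorem follows. (Using internal rather than Cartesian direct products, one gets the sharper statement that $\cra CF$ actually equals the internal direct product of the $\cras C{\mc F_i}$.)

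I do not expect a genuine obstacle: the whole argument is bookkeeping resting on \refT{disj}, \refL{i-vi}, and \refD{smult}. The one point requiring a moment's care is the treatment of $\,\otimes\,$ --- one must note both that atoms indexed by indices from different $\mc E$-classes multiply to $\varnot$, so that there is no ``cross-component'' interaction, and that the defining formula for $\,\otimes\,$ on atoms within a single class $J_i$ never refers to any group, quotient isomorphism, or shifting coset outside $\mc F_i$ --- but both of these are immediate from \refD{smult} and \refL{emptycomp}. Finally, I would remark that this proof also supplies the implication postponed in \refT{simple1}: if $\mc F$ is not simple, then either $I=\varnot$, so that $\cra CF$ is the one-element algebra, or $\mc E$ has at least two classes, so that $\cra CF$ is a direct product of at least two nontrivial algebras; in neither case is $\cra CF$ simple.
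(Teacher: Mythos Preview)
Your proof is correct and follows exactly the approach the paper sketches in the paragraphs preceding the theorem (the paper itself writes only ``The details of the proof of this theorem are left to the reader''); you have simply filled in those details, including the explicit isomorphism $\Phi$ and the atom-by-atom verification that $\,\otimes\,$ respects the component decomposition. One tiny redundancy: the case ``$y=w$ but $x$ and $z$ lie in different $\mc E$-classes'' is vacuous, since $\mc E$ is an equivalence relation, and the vanishing of the $\otimes$-product when $y\neq w$ is already part of \refD{smult}, so the appeal to \refL{emptycomp} there is unnecessary.
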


The details of the proof of this theorem are left to the reader.

Return now to the proof of the reverse implication in
\refT{simple1}. Assume that the given semi-frame $\mc F$ is not
simple. If the group index set $I$ is empty, then the base set $U$
is also empty, and in this case $\cra C F$ is a one-element
relation algebra with the empty relation as its only element. In
particular, $\cra CF$ is not simple. On the other hand, if the
group index set $I$ is non-empty, then the isomorphism index set
$\mc E$ has at least two equivalence classes, by the definition of
a simple semi-frame.  The coset relation algebra $\cra C F$ is
isomorphic to the direct product of the coset relation algebras on
the component semi-frames of $\mc F$, by Decomposition
\refT{cosetdecomp}, and there are at least two such components.
Each of these components is a simple semi-frame that satisfies the
coset conditions, so the corresponding coset relation algebra must
be  simple, by the first part of  the proof of \refT{simple1}.  It
follows that $\cra CF$ is isomorphic to a direct product of at
least two simple relation algebras, so $\cra CF$ cannot be simple.
For example, the projection of $\cra C F$ onto one of the factor
algebras is a non-constant homomorphism that  is not injective.

\end{document}